\newtheorem{theorem}{Theorem}
\newtheorem{lemma}[theorem]{Lemma}
\newtheorem{conjecture}[theorem]{Conjecture}
\newcommand{\D}{{\mathfrak{D}}}
\newcommand{\sB}{{\mathfrak{B}}}
\newcommand{\sP}{\mathcal{P}}
\begin{document}

\title[$\alpha$-diperfect and BE-diperfect digraphs]{Some results on Berge's conjecture and Begin-End conjecture}


\author*[1]{\fnm{Lucas Ismaily Bezerra} \sur{Freitas}}\email{ismailybf@ic.unicamp.br}

\author[1]{\fnm{Orlando} \sur{Lee}}\email{lee@ic.unicamp.br}
\equalcont{This author was supported by CNPq Proc. 303766/2018-2, CNPq Proc 425340/2016-3 and FAPESP Proc. 2015/11937-9. ORCID: 0000-0003-4462-3325.}

\affil[1]{\orgdiv{Institute of Computing}, \orgname{State University of Campinas}, \orgaddress{\street{Albert Einstein}, \city{Campinas}, \postcode{13083-852}, \state{São Paulo}, \country{Brazil}}}


\abstract{
Let $D$ be a digraph. A subset $S$ of $V(D)$ is a \emph{stable set} if every pair of vertices in $S$ is non-adjacent in $D$. A collection of disjoint paths $\sP$ of $D$ is a \emph{path partition} of $V(D)$, if every vertex in $V(D)$ is on a path of $\sP$. We say that a stable set $S$ and a path partition $\sP$ are \emph{orthogonal} if each path of $P$ contains exactly one vertex of $S$. A digraph $D$ satisfies the $\alpha$\emph{-property} if for every maximum stable set $S$ of $D$, there exists a path partition $\sP$ such that $S$ and $\sP$ are orthogonal. A digraph $D$ is $\alpha$\emph{-diperfect} if every induced subdigraph of $D$ satisfies the $\alpha$-property. In 1982, Claude Berge proposed a characterization of $\alpha$-diperfect digraphs in terms of forbidden \emph{anti-directed odd cycles}. In 2018, Sambinelli, Silva and Lee proposed a similar conjecture. A digraph $D$ satisfies the \emph{Begin-End-property} or \emph{BE-property} if for every maximum stable set $S$ of $D$, there exists a path partition $\sP$ such that (i)~$S$ and $\sP$ are orthogonal and (ii)~for each path $P\in\sP$, either the start or the end of $P$ lies in $S$. A digraph $D$ is \emph{BE-diperfect} if every induced subdigraph of $D$ satisfies the BE-property. Sambinelli, Silva and Lee proposed a characterization of BE-diperfect digraphs in terms of forbidden \emph{blocking odd cycles}. In this paper, we show some structural results for $\alpha$-diperfect and BE-diperfect digraphs. In particular, we show that in every minimal counterexample $D$ to both conjectures, the size of a maximum stable set is smaller than $\vert V(D)\vert /2$. As an application we use these results to prove both conjectures for arc-locally in-semicomplete and arc-locally out-semicomplete digraphs.}

\keywords{Arc-locally in-semicomplete digraph, Diperfect digraph, Berge's conjecture, Begin-End conjecture}



\maketitle

\section{Notation}
\label{nota}

We consider that the reader is familiar with the basic concepts of graph theory. Thus, this section is mainly concerned with establishing the notation used. For details that are not present in this paper, we refer the reader to Bang-Jensen and Gutin's book~\cite{bang2008digraphs} or to Bondy and Murty's book~\cite{Bondy08}. 

Let $D$ be a digraph with vertex set $V(D)$ and edge set $E(D)$. We only consider finite digraphs without loops and multiple edges. Given two vertices $u$ and $v$ of $V(D)$, we say that $u$ \emph{dominates} $v$, denoted by $u \to v$, if $uv \in E(D)$. We say that $u$ and $v$ are \emph{adjacent} if $u \to v$ or $v \to u$; otherwise we say that $u$ and $v$ are \emph{non-adjacent}. If every pair of distinct vertices of $D$ are adjacent, we say that $D$ is a \emph{semicomplete digraph}. A digraph $H$ is a \emph{subdigraph} of $D$ if $V(H)\subseteq V(D)$ and $E(H) \subseteq E(D)$; moreover, if every edge of $E(D)$ with both vertices in $V(H)$ is in $E(H)$, then we say that $H$ is \emph{induced} by $X = V(H)$, and we write $H = D[X]$. If $uv$ is an edge of $D$, then we say that $u$ and $v$ are \emph{incident} in $uv$. We say that two edges are \emph{adjacent} if they have an incident vertex in common; otherwise we say that they are \emph{non-adjacent}. We say that a digraph $H$ is \emph{inverse} of $D$ if $V(H) = V(D)$ and $E(H)= \{uv : vu \in E(D)\}$.

We say that a vertex $u$ is an \emph{in-neighbor} (resp., \emph{out-neighbor}) of a vertex $v$ if $u \to v$ (resp., $v \to u$). Let $X$ be a subset of $V(D)$. We denote by $N^-(X)$ (resp., $N^+(X)$) the set of vertices in $V(D)-X$ that are in-neighbors (resp., out-neighbors) of some vertex of $X$. We define the \emph{neighborhood} of $X$ as $N(X)=N^-(X) \cup N^+(X)$; when $X=\{v\}$, we write $N^-(v)$, $N^+(v)$ and $N(v)$. We say that $v$ is a \emph{source} if $N^-(v)=\emptyset$ and a \emph{sink} if $N^+(v)=\emptyset$. Furthermore, we define the \emph{neighborhood} of a subset $X$ in a graph $G$, denoted by $N(X)$, as the set of vertices in $V(G)-X$ that are adjacent of some vertex of $X$.

For disjoint subsets $X$ and $Y$ of $V(D)$ (or subdigraphs of $D$), we say that $X$ and $Y$ are \emph{adjacent} if some vertex of $X$ and some vertex of $Y$ are adjacent; $X \to Y$ means that every vertex of $X$ dominates every vertex of $Y$, $X \Rightarrow Y$ means that there exists no edge from $Y$ to $X$ and $X \mapsto Y$ means that both of $X \to Y$ and $X \Rightarrow Y$ hold. When $X = \{x\}$ or $Y = \{y\}$, we write $x \mapsto Y$ and $X \mapsto  y$.

A \emph{path} $P$ in a digraph $D$ is a sequence of distinct vertices $P = v_1v_2 \dots v_k$, such that for all $v_i \in V(P)$, $v_iv_{i+1} \in E(D)$, for $1 \leq i \leq k-1$. We say that $P$ \emph{starts} at $v_1$ and \emph{ends} at $v_k$; to emphasize this fact we may write $P$ as $v_1Pv_k$. We define the \emph{length} of $P$ as $k-1$. We denote by $P_k$ the class of isomorphism of a path of length $k-1$. For disjoint subsets $X$ and $Y$ of $V(D)$ (or subdigraphs of $D$), we say that $X$ \emph{reaches} $Y$ if there are $u \in X$ and $v \in Y$ such that there exists a path from $u$ to $v$ in $D$. The \emph{distance} from $u \in V(D)$ to $v \in V(D)$, denoted by $\textrm{dist}(u,v)$, is the length of the shortest path from $u$ to $v$. The distance from $X$ to $Y$ is $\textrm{dist}(X,Y)= \min\{\textrm{dist}(u,v): u\in X$ and $v \in Y \}$.

A \emph{cycle} $C$ in a digraph $D$ is a sequence of vertices $C = v_1v_2 \dots v_kv_1$ such that $v_1v_2 \dots v_k$ is a path, $v_kv_1 \in E(D)$ and $k>1$. We define the \emph{length} of $C$ as $k$. If $k$ is odd, then we say that $C$ is an \emph{odd cycle}. We say that $D$ is an \emph{acyclic digraph} if $D$ does not contain cycles. The \emph{underlying graph} of a digraph $D$, denoted by $U(D)$, is the simple graph defined by $V(U(D))= V(D)$ and $E(U(D))= \{uv : u $ and $v$ are adjacent in $D\}$. We say that $C$ is a \emph{non-oriented cycle} if $C$ is not a cycle in $D$, but $U(C)$ is a cycle in $U(D)$.

Let $D$ be a digraph. A subset $S$ of $V(D)$ is a \emph{stable set} if every pair of vertices in $S$ is non-adjacent in $D$. The cardinality of a maximum stable set in $D$ is called the \emph{stability number} and is denoted by $\alpha(D)$. A collection of disjoint paths $\sP$ of $D$ is a \emph{path partition} of $V(D)$, if every vertex in $V(D)$ belongs to exactly one path of $\sP$. Let $S$ be a stable set of $D$. We say that $S$ and $\sP$ are \emph{orthogonal} if $\vert V(P) \cap S\vert  = 1$ for every $P \in \sP$.

Let $G$ be a connected graph. A \emph{clique} is a set of pairwise adjacent vertices of $G$. The \emph{clique number} of $G$, denoted by $\omega(G)$, is the size of maximum clique of $G$. We say that a vertex set $B \subset V(G)$ is a \emph{vertex cut} if $G-B$ is a disconnected graph. If $G[B]$ is a complete graph, then we say that $B$ is a \emph{clique cut}. A \emph{(proper) coloring} of $G$ is a partition of $V(G)$ into stable sets $\{S_1,\ldots, S_k\}$. The \emph{chromatic number} of $G$, denoted by $\chi(G)$, is the cardinality of a minimum coloring of $G$. We say that $G$ is \emph{perfect} if for every induced subgraph $H$ of $G$, the equality $\omega(H)=\chi(H)$ holds. We say that a digraph $D$ is \emph{diperfect} if $U(D)$ is perfect.

A \emph{matching} $M$ in a graph $G$ is a set of pairwise non-adjacent edges of $G$. We denoted by $V(M)$ the set of vertices incident on the edges of $M$. We say that a vertex $v$ is \emph{covered} by $M$ if $v \in V(M)$. We also say that $M$ is a matching covering $X \subseteq V(G)$ if $X \subseteq V(M)$. An \emph{$M$-alternating path} $P$ in $G$ is a path whose edges are alternately in $M$ and $E(G)-M$. If neither the start nor the end of $P$ is covered by $M$, then $P$ is called an \emph{$M$-augmenting path}. A matching $M$ in $G$ is \emph{perfect} if it covers $V(G)$. We say that a subset of edges of a digraph $D$ is a \emph{matching} if its corresponding set of edges in $U(D)$ is a matching. Moreover, we denote a bipartite (di)graph $G$ with bipartition $(X,Y)$ by $G[X,Y]$.

\section{Introduction}
\label{intro}

Some very important results in graph theory characterize a certain class of graphs (or digraphs) in terms of certain forbidden induced subgraphs (subdigraphs). The most famous one is probably Berge's Strong Perfect Graph Conjecture~\cite{chudnovsky2006strong}. Berge showed that neither an odd cycle of length at least five nor its complement is perfect. He conjectured that a graph $G$ is perfect if and only if it contains neither an odd cycle of length at least five nor its complement as an induced subdigraph. In 2006, Chudnovsky, Robertson, Seymour and Thomas~\cite{chudnovsky2006strong} proved Berge's conjecture, which became known as the Strong Perfect Graph Theorem.

\begin{theorem}[Chudnovsky, Robertson, Seymour and Thomas, 2006]
\label{perf}
A graph $G$ is perfect if and only if $G$ contains neither an odd cycle of length at least five nor its complement as an induced subgraph. 
\end{theorem}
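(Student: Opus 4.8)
The plan is to prove the substantive direction: if a graph $G$ contains, as an induced subgraph, neither an odd hole of length at least five nor the complement of one (call such a $G$ a \emph{Berge} graph), then $G$ is perfect. The reverse direction is Berge's elementary observation that for $k\ge 2$ the odd cycle $C_{2k+1}$ has $\omega = 2 < 3 = \chi$ and its complement has $\omega = k < k+1 = \chi$, together with the fact that perfection passes to induced subgraphs. So I would take a counterexample $G$ to the hard direction with $|V(G)|$ minimum; then $G$ is a \emph{minimal imperfect} Berge graph, meaning every proper induced subgraph of $G$ is perfect while $G$ itself is not, and the entire task is to derive a contradiction.

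First I would collect the classical structural facts about minimal imperfect graphs (Lovász, Padberg, and others): such a $G$ is connected and co-connected, has no clique cutset, satisfies $|V(G)| = \alpha(G)\,\omega(G)+1$, every vertex lies in exactly $\omega(G)$ maximum cliques and exactly $\alpha(G)$ maximum stable sets, and the same with $G$ replaced by its complement. This rigidity is exactly what an honest structural decomposition will violate, so these facts are the "contradiction engine" at the end.

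The core of the argument is a \emph{structure theorem for Berge graphs}: every Berge graph either lies in one of a short list of \emph{basic} classes — bipartite graphs, line graphs of bipartite graphs, the complements of these two families, and double split graphs — or else admits one of a short list of structural \emph{decompositions}: a $2$-join, a $2$-join in the complement, or a balanced skew partition. I would prove this by a long induction and case analysis whose recurring move is to locate a convenient induced configuration inside $G$ (a long hole, an even or odd prism, a line-graph-like "core", a suitably attached wheel, etc.) and then analyse how every remaining vertex attaches to it, repeatedly using the Berge hypothesis to forbid odd holes and odd antiholes and thereby forcing either membership in a basic class or a clean separation of $G$ into the prescribed pieces. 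To finish, I would verify the two ends of the dichotomy: (i) each basic class is perfect — bipartite graphs trivially, line graphs of bipartite graphs by König's edge-colouring theorem, their complements by the weak perfect graph theorem (or directly), and double split graphs by a short direct computation; and (ii) none of the three decompositions can occur in a minimal imperfect graph — the $2$-join and complementary-$2$-join cases by an amalgamation/parity argument that glues optimal colourings across the two sides, and the skew-partition case by the theorem that no minimal imperfect graph has a \emph{balanced} skew partition (a consequence weaker than Chvátal's full skew-partition conjecture, but enough here). Since a minimal imperfect Berge graph can be neither basic (all basic graphs are perfect) nor decomposable (each decomposition is excluded in a minimal imperfect graph), no such graph exists, and the theorem follows.

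I expect the main obstacle to be the structure theorem itself: the case analysis is vast, and the genuinely delicate part is dealing with graphs that contain a prism-like or line-graph-like induced subgraph yet are not line graphs, where one must carefully extract the double-split-graph outcome or a balanced skew partition rather than a $2$-join. A secondary, and still substantial, obstacle is the proof that a minimal imperfect graph admits no balanced skew partition, which requires its own intricate argument combining the counting identities above with a careful analysis of the two sides of the partition.
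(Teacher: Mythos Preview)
The paper does not prove this theorem at all: Theorem~\ref{perf} is stated as background and attributed to Chudnovsky, Robertson, Seymour and Thomas with a citation to~\cite{chudnovsky2006strong}, and no argument (not even a sketch) is given in the paper. So there is nothing in the paper to compare your proposal against.

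For what it is worth, your outline is a faithful high-level summary of the actual Chudnovsky--Robertson--Seymour--Thomas proof: the reduction to a minimal imperfect Berge graph, the structure theorem with the basic classes (bipartite, complement bipartite, line graphs of bipartite, their complements, double split) and the decompositions ($2$-join, complement $2$-join, balanced skew partition), and the two endgame verifications. You are also right that the bulk of the difficulty lives in the structure theorem's case analysis and in showing that a minimal imperfect graph admits no balanced skew partition. But since the present paper simply quotes the result, your proposal is not so much an alternative to the paper's proof as a sketch of the only known proof of a theorem the paper takes for granted.
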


In this paper we are concerned with two conjectures on digraphs which are somehow similar to Berge's conjecture. Those conjectures relate \emph{path partitions} and \emph{stable sets}. We need a few definitions in order to present both conjectures. 

Let $S$ be a stable set of a digraph $D$. An \emph{$S$-path partition} of $D$ is a path partition $\sP$ such that $S$ and $\sP$ are orthogonal. We say that $D$ satisfies the \emph{$\alpha$-property} if for every maximum stable set $S$ of $D$ there exists an $S$-path partition of $D$, and we say that $D$ is \emph{$\alpha$-diperfect} if every induced subdigraph of $D$ satisfies the $\alpha$-property. A digraph $C$ is an \emph{anti-directed odd cycle} if $({\rm i})$ $U(C) = x_1x_2 \dots x_{2k+1}x_1$ is a cycle, where $k \geq 2$ and $(\rm{ii})$ each of the vertices $x_1,x_2,x_3,x_4,x_6,x_8, \ldots, x_{2k}$ is either a source or a sink (see Figure~\ref{circ-berge}).

\begin{figure}[htbp]
\center
\subfloat[]{
    \tikzset{middlearrow/.style={
	decoration={markings,
		mark= at position 0.6 with {\arrow{#1}},
	},
	postaction={decorate}
}}

\tikzset{shortdigon/.style={
	decoration={markings,
		mark= at position 0.45 with {\arrow[inner sep=10pt]{<}},
		mark= at position 0.75 with {\arrow[inner sep=10pt]{>}},
	},
	postaction={decorate}
}}

\tikzset{digon/.style={
	decoration={markings,
		mark= at position 0.4 with {\arrow[inner sep=10pt]{<}},
		mark= at position 0.6 with {\arrow[inner sep=10pt]{>}},
	},
	postaction={decorate}
}}

\begin{tikzpicture}[scale = 0.5]		
	\node (n4) [black vertex] at (6.5,10) {};
	\node (n2) [black vertex] at (9,5)  {};
	\node (n3) [black vertex] at (9,8)  {};
	\node (n5) [black vertex] at (4,8)  {};
	\node (n1) [black vertex] at (4,5)  {};
	
	\node (label_n4)  at (6.5,10.5) {$v_4$};
	\node (label_n2)  at (9.5,4.5)  {$v_2$};
	\node (label_n3)  at (9.5,8.5)  {$v_3$};
	\node (label_n5)  at (3.5,8.5)  {$v_5$};
	\node (label_n1)  at (3.5,4.5)  {$v_1$};

  \foreach \from/\to in {n1/n2,n3/n2,n3/n4,n5/n4,n1/n5}
    \draw[edge,middlearrow={>}] (\from) -- (\to);    
\end{tikzpicture}
}
\quad
\subfloat[]{
    \tikzset{middlearrow/.style={
	decoration={markings,
		mark= at position 0.6 with {\arrow{#1}},
	},
	postaction={decorate}
}}

\tikzset{shortdigon/.style={
	decoration={markings,
		mark= at position 0.45 with {\arrow[inner sep=10pt]{<}},
		mark= at position 0.75 with {\arrow[inner sep=10pt]{>}},
	},
	postaction={decorate}
}}

\tikzset{digon/.style={
	decoration={markings,
		mark= at position 0.4 with {\arrow[inner sep=10pt]{<}},
		mark= at position 0.6 with {\arrow[inner sep=10pt]{>}},
	},
	postaction={decorate}
}}

\begin{tikzpicture}[scale = 0.5]		

	\node (n1) [black vertex] at (4,5)  {};
	\node (n2) [black vertex] at (9,5)  {};
	\node (n3) [black vertex] at (9,7)  {};
    \node (n4) [black vertex] at (9,9)  {};
	\node (n5) [black vertex] at (6.5,11) {};
	\node (n6) [black vertex] at (4,9)  {};
	\node (n7) [black vertex] at (4,7)  {};

	\node (label_n1)  at (3.5,4.5)  {$v_1$};
	\node (label_n2)  at (9.5,4.5)  {$v_2$};
	\node (label_n3)  at (9.5,7.5)  {$v_3$};
	\node (label_n4)  at (9.5,9.5)  {$v_4$};	
	\node (label_n5)  at (6.5,11.5) {$v_5$};
	\node (label_n6)  at (3.5,9.5)  {$v_6$};
	\node (label_n7)  at (3.5,7.5)  {$v_7$};

  \foreach \from/\to in {n1/n2,n3/n2,n1/n7,n6/n7,n6/n5,n5/n4,n3/n4}
    \draw[edge,middlearrow={>}] (\from) -- (\to);    
\end{tikzpicture}
}
\caption{\centering Examples of anti-directed odd cycles with length five and seven, respectively.}
\label{circ-berge}
\end{figure}

Berge~\cite{berge1981} showed that anti-directed odd cycles do not satisfy the $\alpha$-property, and hence, they are not $\alpha$-diperfect, which led him to conjecture the following characterization for $\alpha$-diperfect digraphs.

\begin{conjecture}[Berge, 1982]
\label{conj_berge}
A digraph $D$ is $\alpha$-diperfect if and only if $D$ does not contain an anti-directed odd cycle as an induced subdigraph. 
\end{conjecture}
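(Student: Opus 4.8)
The statement is a biconditional whose two directions have completely different flavours, so I would prove them separately. The forward direction (``$\alpha$-diperfect $\Rightarrow$ no induced anti-directed odd cycle'') is a contrapositive with essentially no new content: if $C$ is an anti-directed odd cycle induced in $D$, then by Berge's observation~\cite{berge1981} the induced subdigraph $D[V(C)]=C$ already fails the $\alpha$-property, so $D$ is not $\alpha$-diperfect. The only thing to recall is \emph{why} $C$ fails the $\alpha$-property: take the size-$k$ stable set of $U(C)$ that picks one vertex from each ``$P_3$-block'' around the cycle (the maximum stable set avoiding $x_5$); a path partition orthogonal to it must cover the $2k+1$ vertices by exactly $k$ vertex-disjoint paths, and a short parity/incidence count against the fact that $x_1,x_2,x_3,x_4,x_6,\dots,x_{2k}$ are all sources or sinks shows no such partition exists.

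The real work is the converse. Since having no induced anti-directed odd cycle is a hereditary property, it suffices to prove the single implication \emph{if $D$ has no induced anti-directed odd cycle, then $D$ satisfies the $\alpha$-property}; applying it to every induced subdigraph then yields $\alpha$-diperfectness. I would set this up as a minimal counterexample argument. Let $D$ be a digraph with no induced anti-directed odd cycle that fails the $\alpha$-property, with $|V(D)|$ as small as possible. Minimality forces every proper induced subdigraph of $D$ to satisfy the $\alpha$-property --- hence to be $\alpha$-diperfect --- while $D$ itself is not. Fix a maximum stable set $S$ of $D$ admitting no $S$-path partition. One then runs the usual reductions: $D$ is connected (otherwise handle each component with the $\alpha$-property and concatenate, using that the restrictions of $S$ are maximum stable sets of the components); $S \neq V(D)$; and for each $v \in S$ the subdigraph $D-v$ is $\alpha$-diperfect, so one can examine the path partitions it admits and ask precisely how the paths meeting $N(v)$ obstruct splicing them together through $v$.

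The crux is to push these structural deductions until one can either (a) build an $S$-path partition of $D$ by rerouting path partitions of the subdigraphs $D-v$, $v\in S$, or (b) read off an induced anti-directed odd cycle of $D$ --- either outcome contradicting the choice of $D$. Natural milestones along the way are: proving $S$ is ``tight'', i.e.\ $\alpha(D-v)=\alpha(D)-1$ for every $v\in S$; controlling how consecutive rerouted paths interact with the sources and sinks of $D$, since an anti-directed odd cycle is exactly what an obstruction of odd parity supported on source/sink vertices in the right positions looks like; and establishing the bound $\alpha(D) < |V(D)|/2$ for a minimal counterexample, which this paper does and which guarantees that any $S$-path partition contains a path on at least three vertices, providing the slack needed to reroute.

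The main obstacle is precisely the dichotomy (a)/(b) in full generality: there is no known way to force it for an arbitrary $D$, which is exactly why this remains a conjecture. The workable next step --- and the one this paper carries out --- is to impose local structure that tames the rerouting, for instance restricting to arc-locally in-semicomplete or arc-locally out-semicomplete digraphs, where semicompleteness of the relevant in- or out-neighbourhoods sharply limits how a path partition of $D-v$ can fail to pass through $v$, so that combined with $\alpha(D)<|V(D)|/2$ one can complete the argument for those classes. The companion Begin-End conjecture would be attacked by the identical template, with ``anti-directed odd cycle'' replaced by ``blocking odd cycle'' and the $\alpha$-property replaced by the BE-property.
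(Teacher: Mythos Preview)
This statement is a \emph{conjecture}, not a theorem, and the paper does not prove it. The paper only states it, attributes the forward direction to Berge~\cite{berge1981}, and then verifies the converse for the restricted classes of arc-locally in-semicomplete and arc-locally out-semicomplete digraphs. So there is no ``paper's own proof'' to compare your proposal against.

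Your write-up is honest about this: you correctly handle the forward direction (which is exactly Berge's argument that an anti-directed odd cycle fails the $\alpha$-property), and you explicitly concede that the dichotomy (a)/(b) in your minimal-counterexample template cannot be forced in general. That concession is the whole point --- what you have written is a proof \emph{strategy}, not a proof, and the gap you identify is the actual open problem. The structural reductions you list (connectedness, $\alpha(D)<|V(D)|/2$, no clique cut) do match what the paper establishes in Section~\ref{struc-results} via Lemmas~\ref{samb_part}, \ref{samb_clique} and Theorem~\ref{arc-in-maior-S-be}, so your framework is aligned with the paper's. But you should not present this as a proof of Conjecture~\ref{conj_berge}; at most it is a proof of the easy direction together with a discussion of how one would attack the converse and why that attack currently succeeds only under additional hypotheses such as arc-local in-semicompleteness.
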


Denote by $\sB$ the set of all digraphs which do not contain an induced anti-directed odd cycle. So Berge's conjecture can be stated as: $D$ is $\alpha$-diperfect if and only if $D$ belongs to $\sB$. In 1982, Berge~\cite{berge1981} verified Conjecture~\ref{conj_berge} for diperfect digraphs and for symmetric digraphs (digraphs such that if $uv \in E(D)$, then $vu \in E(D)$). In the next three decades, no results regarding this problem were published. In 2018, Sambinelli, Silva and Lee~\cite{tesemaycon2018,ssl} verified Conjecture~\ref{conj_berge} for locally in-semicomplete digraphs and digraphs whose underlying graph is series-parallel. To the best of our knowledge these are the only particular cases verified for this conjecture. For ease of reference, we state the following result.

\begin{lemma}[Berge, 1982]
\label{diper-alpha}
Let $D$ be a diperfect digraph. Then, $D$ is $\alpha$-diperfect.
\end{lemma}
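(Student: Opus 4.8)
The plan is to reduce the lemma to a single clique‑partition argument. First I would note that it suffices to show that every diperfect digraph satisfies the $\alpha$-property: if $D$ is diperfect then every induced subdigraph $D[X]$ is again diperfect, since $U(D[X])$ is an induced subgraph of the perfect graph $U(D)$ and induced subgraphs of perfect graphs are perfect. So from now on let $D$ be diperfect, put $G=U(D)$, and let $S$ be a maximum stable set of $D$; as adjacency in $D$ and in $G$ is the same, $S$ is a maximum stable set of $G$, so $|S|=\alpha(G)=\alpha(D)=:k$.

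Next I would use perfection to produce a partition of $V(G)$ into exactly $k$ cliques. By Theorem~\ref{perf}, $G$ contains no induced odd cycle of length at least five and no induced complement of such a cycle; complementation interchanges these two forbidden families, so the complement $\overline G$ also contains neither, whence $\overline G$ is perfect (again by Theorem~\ref{perf}). Therefore $\chi(\overline G)=\omega(\overline G)=\alpha(G)=k$; that is, $V(G)$ can be partitioned into $k$ cliques $Q_1,\dots,Q_k$ of $G$. Since any clique meets the stable set $S$ in at most one vertex, $k=|S|=\sum_{i=1}^{k}|Q_i\cap S|\le k$, which forces $|Q_i\cap S|=1$ for every $i$ — this is where the maximality of $S$ is used.

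To turn the $Q_i$ into paths I would use that each $D[Q_i]$ is semicomplete (its vertices are pairwise adjacent in $D$) and that every semicomplete digraph has a directed Hamiltonian path: a one-line induction, inserting each new vertex into an already-built Hamiltonian path using its adjacency to every placed vertex (the tournament case being Rédei's theorem). Letting $P_i$ be such a Hamiltonian path of $D[Q_i]$, the family $\sP=\{P_1,\dots,P_k\}$ is a path partition of $D$ with $V(P_i)\cap S = Q_i\cap S$ a single vertex for each $i$; hence $S$ and $\sP$ are orthogonal, and $D$ satisfies the $\alpha$-property.

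I do not expect a genuine obstacle here. The two ingredients — that a perfect graph has a clique partition of size $\alpha$, and that a semicomplete digraph has a Hamiltonian path — are both standard, and the rest is a counting step. The only point that demands a little attention is getting the number of cliques to be \emph{exactly} $k$: the lower bound $\ge k$ is free from the stable set $S$, while the upper bound $\le k$ genuinely needs that $\overline G$ is perfect (which we deduced from Theorem~\ref{perf}).
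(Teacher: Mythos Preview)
Your argument is correct. The paper does not actually prove this lemma; it is stated with attribution to Berge~\cite{berge1981} and used as a black box, so there is no in-paper proof to compare against. What you wrote is essentially Berge's original argument: partition $V(G)$ into $\alpha(G)$ cliques using perfection of the complement, observe that a maximum stable set hits each clique exactly once, and span each clique by a Hamiltonian path of the corresponding semicomplete subdigraph.

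One small remark: to conclude that $\overline G$ is perfect you invoked the Strong Perfect Graph Theorem (Theorem~\ref{perf}). This works, but it is heavier machinery than necessary and is anachronistic relative to Berge's 1982 result---Lov\'asz's (Weak) Perfect Graph Theorem, that the complement of a perfect graph is perfect, already gives you $\chi(\overline G)=\omega(\overline G)=\alpha(G)$ directly. Either way the logic is sound.
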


In an attempt to understand the main difficulties in proving Conjecture~\ref{conj_berge}, Sambinelli, Silva and Lee~\cite{tesemaycon2018,ssl} introduced the class of Begin-End-diperfect digraphs, or simply BE-diperfect digraphs, which we define next.

Let $S$ be a stable set of a digraph $D$. A path partition $\sP$ is an \emph{$S_{BE}$-path partition} of $D$ if $(\rm{i})$ $\sP$ and $S$ are orthogonal and $(\rm{ii})$ every vertex of $S$ starts or ends a path at $\sP$. We say that $D$ satisfies the \emph{BE-property} if for every maximum stable set of $D$ there exists an $S_{BE}$-path partition, and we say that $D$ is \emph{BE-diperfect} if every induced subdigraph of $D$ satisfies the BE-property. Note that if $D$ is BE-diperfect, then it is also $\alpha$-diperfect, but the converse is not true (see the digraph in Figure~\ref{circ-bloqueante}(b)). A digraph $C$ is a \emph{blocking odd cycle} if $(\rm{i})$ $U(C) = x_1 x_2 \dots x_{2k+1}x_1$ is a cycle, where $k \geq 1$ and $(\rm{ii})$ $x_1$ is a source and $x_2$ is a sink (see Figure~\ref{circ-bloqueante}). Note that every anti-directed odd cycle is also a blocking odd cycle. In the special case $k=1$, we say that $D$ is a \emph{transitive triangle} (see Figure~\ref{circ-bloqueante}(b)).

\begin{figure}[htbp]
\center
\subfloat[]{
	    \tikzset{middlearrow/.style={
	decoration={markings,
		mark= at position 0.6 with {\arrow{#1}},
	},
	postaction={decorate}
}}

\tikzset{shortdigon/.style={
	decoration={markings,
		mark= at position 0.45 with {\arrow[inner sep=10pt]{<}},
		mark= at position 0.75 with {\arrow[inner sep=10pt]{>}},
	},
	postaction={decorate}
}}

\tikzset{digon/.style={
	decoration={markings,
		mark= at position 0.4 with {\arrow[inner sep=10pt]{<}},
		mark= at position 0.6 with {\arrow[inner sep=10pt]{>}},
	},
	postaction={decorate}
}}

\begin{tikzpicture}[scale = 0.5,auto=left]
	\node (n4) [black vertex] at (6.5,10) {};
	\node (n2) [black vertex] at (9,5)  {};
	\node (n3) [black vertex] at (9,8)  {};
	\node (n5) [black vertex] at (4,8)  {};
	\node (n1) [black vertex] at (4,5)  {};
	
	\node (label_n4)  at (6.5,10.5) {$v_4$};
	\node (label_n2)  at (9.5,4.5)  {$v_2$};
	\node (label_n3)  at (9.5,8.5)  {$v_3$};
	\node (label_n5)  at (3.5,8.5)  {$v_5$};
	\node (label_n1)  at (3.5,4.5)  {$v_1$};

  \foreach \from/\to in {n1/n2,n3/n2,n3/n4,n1/n5}
    \draw[edge,middlearrow={>}] (\from) -- (\to);
    
  \foreach \from/\to in {n5/n4}
    \draw[edge,digon] (\from) -- (\to);      
\end{tikzpicture}
}
\quad
\subfloat[]{
    \input{circ-bloqueante-2.tex}
}
\caption{\centering Examples of blocking odd cycles with length five and three, respectively. We also say that the digraph in (b) is a transitive triangle.}
\label{circ-bloqueante}
\end{figure}

Sambinelli, Silva and Lee~\cite{tesemaycon2018,ssl} showed that blocking odd cycles do not satisfy the BE-property, and hence, they are not BE-diperfect, which led them to conjecture the following characterization of BE-diperfect digraphs.

\begin{conjecture}[Sambinelli, Silva and Lee, 2018]
\label{conj_be}
A digraph $D$ is BE-diperfect if and only if $D$ does not contain a blocking odd cycle as an induced subdigraph.
\end{conjecture}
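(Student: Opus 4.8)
The ``only if'' direction is immediate from what has already been recalled: Sambinelli, Silva and Lee showed that a blocking odd cycle fails the BE-property, so any digraph having one as an induced subdigraph cannot be BE-diperfect. All the content is in the converse, and — as this is a conjecture — the realistic plan is to pin down the structure of a hypothetical minimal counterexample and then to confirm the statement for concrete classes. Accordingly, I would fix a counterexample $D$ to the converse that is minimal in $|V(D)|$: so $D$ has no induced blocking odd cycle, some maximum stable set $S$ of $D$ admits no $S_{BE}$-path partition, and every proper induced subdigraph of $D$ is BE-diperfect.

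The first task is to clear away the configurations a minimal counterexample cannot contain, so as to force $D$ to look ``cycle-like'' around its maximum stable sets: that $D$ is connected and has no clique cut (an $S_{BE}$-path partition can be assembled separately on the pieces and glued together), and that $D$ contains no vertex one can delete, handle via minimality, and reinsert — as a singleton path, or at the free end of a path it is adjacent to — without violating orthogonality or condition~(ii). Each such reduction either finishes the digraph or tightens its structure.

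The core step, and the one highlighted in the abstract, is the bound $\alpha(D) < |V(D)|/2$. Suppose not and fix a maximum stable set $S$, so $|V(D)\setminus S|\le|S|$. Consider the bipartite graph $B = U(D)[S,\,V(D)\setminus S]$. If $B$ has a matching saturating $V(D)\setminus S$, then the matching edges (as paths of length one) together with the unmatched vertices of $S$ (as singleton paths) form an $S_{BE}$-path partition, a contradiction; hence no such matching exists, and by K\"onig's theorem there is a set $T\subseteq V(D)\setminus S$ with $|N(T)\cap S|<|T|$, the vertices of $T$ being non-adjacent to $S\setminus N(T)$. From this deficiency one must derive a contradiction: either enlarge $S$ against maximality, or pass to the smaller digraph $D[T\cup(N(T)\cap S)]$ and apply minimality, or, in the stubborn case, read off from the alternating structure on $T$ and $N(T)\cap S$ a non-oriented odd cycle whose source/sink pattern is exactly that of a blocking odd cycle. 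Making this last alternative yield a genuine induced blocking odd cycle, while ruling out the intermediate possibilities, is the main obstacle: one has to track how paths of length at least two — precisely the objects one would use to repair a failed matching — interact with condition~(ii), and it is there that the forbidden subdigraph must surface.

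Finally, with $\alpha(D) < |V(D)|/2$ available, I would contradict it on the target classes. For an arc-locally out-semicomplete (resp.\ in-semicomplete) digraph the local adjacency rules pin down the non-neighbours of a maximum stable set so tightly — they behave almost like a semicomplete block attached to $S$ — that a minimal counterexample would in fact satisfy $\alpha(D)\ge|V(D)|/2$, contradicting the core step and thus proving Conjecture~\ref{conj_be} for those classes. The same scaffolding proves Conjecture~\ref{conj_berge}: one drops condition~(ii), so singleton and length-one paths no longer need to place the $S$-vertex at an end, and the forbidden object becomes the more restrictive anti-directed odd cycle.
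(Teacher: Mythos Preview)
You correctly recognise that this is a conjecture, so the relevant comparison is with the paper's partial results: the bound $\alpha(D)<|V(D)|/2$ for a minimal counterexample, and the verification for arc-locally in/out-semicomplete digraphs. On both counts your plan diverges from the paper in ways that leave genuine gaps.

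For the bound, you aim for a matching of $B=U(D)[S,\,V(D)\setminus S]$ saturating $V(D)\setminus S$; when Hall fails you obtain $T\subseteq V(D)\setminus S$ with $|N(T)\cap S|<|T|$ and then list three options. None of them works as stated: $T$ need not be stable, so you cannot ``enlarge $S$''; passing to $D[T\cup(N(T)\cap S)]$ gives you nothing to induct on (its maximum stable set is unrelated to $S$); and there is no mechanism producing a blocking odd cycle here, nor does the paper need one. The paper instead seeks a matching covering $S$. When Hall fails on that side, the violating set lies in $S$ and is automatically stable, so one can delete its (small) neighbourhood, apply minimality, and reattach via a matching covering that neighbourhood (Lemmas~\ref{arc-in-hall} and~\ref{arc-in-n-cobre-S}). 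When Hall holds on the $S$ side, one combines this with a second matching (Lemma~\ref{arc-in-matching-N(S)}) to get a perfect matching between $S$ and $V(D)\setminus S$, and the matching edges plus singletons finish. The whole argument is matching-theoretic and never invokes the forbidden-subdigraph hypothesis.

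For the arc-locally in-semicomplete case, your plan is to show a minimal counterexample there would satisfy $\alpha(D)\ge|V(D)|/2$ and contradict the bound. This is false already for the directed $C_5$, which is arc-locally in-semicomplete, lies in $\mathfrak D$, and has $\alpha=2<5/2$. The paper does not argue this way at all: it invokes a structure theorem partitioning $V(D)$ into $(V_1,V_2,V_3)$ with $D[V_2]$ an odd extended cycle, proves $V_1=\emptyset$ when $D\in\mathfrak D$, handles the extended cycle directly, and then runs a long case analysis on the layers $N_d$ of $V_3$ (Lemmas~\ref{arc-in-case-d<=2} and~\ref{arc-in-case-d=3}), using Lemma~\ref{stable_set_S_menor_vizinhanca} and several tailored structural lemmas rather than the global $\alpha$-bound.
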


Denote by $\D$ the set of all digraphs which do not contain an induced blocking odd cycle. So Conjecture~\ref{conj_be} can be stated as: $D$ is BE-diperfect if and only if $D$ belongs to $\D$. Sambinelli, Silva and Lee~\cite{tesemaycon2018,ssl} verified Conjecture~\ref{conj_be} for locally in-semicomplete digraphs and digraphs whose underlying graph are series-parallel or perfect. To the best of our knowledge these are the only particular cases verified for this conjecture. Note that a diperfect digraph belongs to $\D$ if and only if it contains no induced transitive triangle. For ease of reference, we state the following result.

\begin{lemma}[Sambinelli, Silva and Lee, 2018]
\label{diperD-BE}
Let $D$ be a diperfect digraph. If $D\in\D$, then $D$ is BE-diperfect.
\end{lemma}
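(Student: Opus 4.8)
The plan is to adapt the clique-cover argument behind Berge's Lemma~\ref{diper-alpha}, strengthening the resulting path partition so that it also meets condition~(ii) of the BE-property. First I would note that it suffices to prove that \emph{every} diperfect digraph belonging to $\D$ satisfies the BE-property, since BE-diperfectness of $D$ is exactly the assertion that each of its induced subdigraphs does, and each such subdigraph is again diperfect (its underlying graph is an induced subgraph of the perfect graph $U(D)$) and still lies in $\D$. So fix a diperfect $D\in\D$ and a maximum stable set $S$, and write $\alpha=|S|=\alpha(D)$. Since $U(D)$ is perfect, its complement is perfect too (Lov\'asz's Perfect Graph Theorem; equivalently, the obstruction list in Theorem~\ref{perf} is self-complementary), so $V(D)$ partitions into $\alpha$ cliques $Q_1,\dots,Q_\alpha$ of $U(D)$. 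Each $Q_i$ meets $S$ in at most one vertex and the $Q_i$ cover $S$, so each $Q_i$ contains exactly one vertex $s_i\in S$. Hence it is enough to find, for every $i$, a Hamiltonian path of the semicomplete digraph $D[Q_i]$ having $s_i$ as one of its two endpoints: the collection of these paths is a path partition orthogonal to $S$ in which every vertex of $S$ is a start or an end, i.e. an $S_{BE}$-path partition, and this works for every maximum stable set.

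Next I would analyse $D[Q_i]$ through its strong components. Being semicomplete, its condensation is an acyclic semicomplete digraph, hence a transitive tournament, so the strong components can be ordered $T_1\mapsto T_2\mapsto\cdots\mapsto T_m$, all edges between distinct $T_j$'s pointing ``forward'' and no digon joining distinct components. The key claim is $m\le 2$: if $m\ge 3$, picking one vertex from each of $T_1,T_2,T_3$ produces three vertices whose only induced edges in $D$ form a transitive triangle, contradicting the fact --- recorded just before this lemma --- that a diperfect digraph in $\D$ has no induced transitive triangle.

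To finish, I would handle the two surviving cases. If $m=1$ then $D[Q_i]$ is strongly connected, and a strongly connected semicomplete digraph has a Hamiltonian path beginning at any prescribed vertex (this follows from Moon's theorem on vertex-pancyclicity of strong semicomplete digraphs; see~\cite{bang2008digraphs}), so take one beginning at $s_i$. If $m=2$, say $D[Q_i]=T_1\mapsto T_2$: when $s_i\in T_1$, concatenate a Hamiltonian path of $T_1$ beginning at $s_i$ (immediate if $|T_1|=1$, otherwise by the fact just quoted applied to the strong digraph $T_1$) with any Hamiltonian path of $T_2$ (every semicomplete digraph has one), joined through a forward edge from $T_1$ to $T_2$; this gives a Hamiltonian path of $D[Q_i]$ starting at $s_i$. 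When $s_i\in T_2$ proceed symmetrically, ending the concatenated path at $s_i$ by applying the prescribed-endpoint fact to the reverse of $T_2$. In all cases $s_i$ is an endpoint, which completes the construction.

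I expect the substantive point to be precisely the dichotomy $m\le 2$ and its interplay with the transitive-triangle obstruction; the remaining care is purely about Hamiltonian paths in semicomplete digraphs --- in particular, one must state the ``prescribed-endpoint'' result for semicomplete digraphs rather than tournaments, because digons are permitted and a strong semicomplete digraph need not contain a spanning strong tournament. The reduction to a clique cover and the final bookkeeping are routine.
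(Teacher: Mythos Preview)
The paper does not actually prove Lemma~\ref{diperD-BE}; it merely quotes it from Sambinelli, Silva and Lee~\cite{tesemaycon2018,ssl} ``for ease of reference''. So there is no proof in the paper to compare against.

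Your argument is correct. The reduction to a clique cover of size $\alpha$ via Lov\'asz's Perfect Graph Theorem is exactly how Berge proved Lemma~\ref{diper-alpha}, and your strengthening --- that in the absence of an induced transitive triangle every semicomplete induced subdigraph $D[Q_i]$ has at most two strong components, hence a Hamiltonian path with $s_i$ as an endpoint --- is precisely what is needed to upgrade an $S$-path partition to an $S_{BE}$-path partition. The $m\le 2$ claim is sound: three vertices chosen from three distinct strong components of a semicomplete digraph induce a transitive triangle, which is a blocking odd cycle of length three and is therefore forbidden (the paper remarks just before the lemma that a diperfect digraph lies in $\D$ if and only if it has no induced transitive triangle). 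For the endpoint construction, all you really need is that a strong semicomplete digraph on at least two vertices has a Hamiltonian \emph{cycle} (the extension of Camion's theorem; see~\cite{bang2008digraphs}), which immediately yields a Hamiltonian path beginning --- or ending --- at any prescribed vertex; invoking full vertex-pancyclicity is more than necessary but harmless. The singleton-component case and the concatenation across $T_1\mapsto T_2$ are handled correctly.
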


The rest of this paper is organized as follows. In Section~\ref{struc-results}, we present some structural results which may be useful on approaching the general conjectures. In particular, we show that if a digraph $D$ is a minimal counterexample for Conjecture~\ref{conj_berge} or Conjecture~\ref{conj_be}, then $\alpha(D) < \frac{\vert V(D)\vert }{2}$. In Section~\ref{arc-semi}, we provide some results on structure of an arc-locally in-semicomplete digraph. In Section~\ref{be-alis}, we verify Conjecture~\ref{conj_be} for arc-locally in-semicomplete and for arc-locally out-semicomplete digraphs. In Section~\ref{berge-alis}, we verify Conjecture~\ref{conj_berge} for arc-locally in-semicomplete and for arc-locally out-semicomplete digraphs. Finally, in Section~\ref{conclu}, we present some conclusions. 

\section{Some structural results}
\label{struc-results}

In this section, we present some structural results for BE-diperfect digraphs and $\alpha$-diperfect digraphs. For the three initial lemmas, we need the celebrated Hall's theorem~\cite{hall1935} and Berge's theorem~\cite{berge1957} about matching. 

\begin{theorem}[Hall, 1935]
\label{teo-hall}
A bipartite graph $G := G[X,Y]$ has a matching covering $X$ if and only if $\vert N(W)\vert  \geq \vert W\vert $ for all $W \subseteq X$.
\end{theorem}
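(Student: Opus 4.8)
The plan is to prove the theorem by induction on $|X|$; the forward implication needs no induction. Indeed, if $M$ is a matching covering $X$ and $W\subseteq X$, then for each $x\in W$ the partner of $x$ under $M$ lies in $N(W)$, and distinct vertices of $W$ receive distinct partners, so $|N(W)|\ge|W|$. For the converse I would assume $|N(W)|\ge|W|$ for every $W\subseteq X$ and induct on $|X|$. The base case $|X|\le 1$ is immediate: if $X=\{x\}$ then $|N(\{x\})|\ge 1$, so a single edge at $x$ is the desired matching. For the inductive step, with $|X|\ge 2$, I would split into two cases, according to whether Hall's condition holds with strict surplus on every proper subset or is tight somewhere.

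In the first case, suppose $|N(W)|\ge|W|+1$ for every nonempty proper subset $W\subsetneq X$. I would choose any $x\in X$ and any neighbour $y\in N(\{x\})$, place the edge $xy$ in the matching, and pass to $G'=G[X\setminus\{x\},\,Y\setminus\{y\}]$. For every $W\subseteq X\setminus\{x\}$ one has $N_{G'}(W)\supseteq N_G(W)\setminus\{y\}$, hence $|N_{G'}(W)|\ge|N_G(W)|-1\ge|W|$, so Hall's condition holds for $G'$; since $|X\setminus\{x\}|<|X|$, the induction hypothesis yields a matching of $G'$ covering $X\setminus\{x\}$, and adjoining $xy$ finishes this case.

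In the second case, suppose $|N(W)|=|W|$ for some nonempty proper subset $W\subsetneq X$. I would first apply the induction hypothesis to $G_1=G[W,N(W)]$, whose Hall condition is inherited directly from that of $G$, to get a matching $M_1$ covering $W$; since $|W|=|N(W)|$, it is actually a perfect matching of $G_1$. Then I would consider $G_2=G[X\setminus W,\,Y\setminus N(W)]$ and check that Hall's condition survives there: for $Z\subseteq X\setminus W$ one has $N_G(W\cup Z)=N(W)\cup N_{G_2}(Z)$ with the two sets on the right disjoint, so $|W|+|N_{G_2}(Z)|=|N_G(W\cup Z)|\ge|W\cup Z|=|W|+|Z|$, which gives $|N_{G_2}(Z)|\ge|Z|$. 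As $|X\setminus W|<|X|$, induction yields a matching $M_2$ of $G_2$ covering $X\setminus W$; because $M_1$ and $M_2$ are supported on disjoint vertex sets, $M_1\cup M_2$ is a matching of $G$ covering $X$.

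I expect the main obstacle to be exactly the verification in the second case that the "outside" graph $G_2$ still satisfies Hall's condition, together with confirming that the case split is exhaustive and that both recursive instances have a strictly smaller left part. An alternative route, perhaps closer in spirit to the matching tools invoked in this section, would be to take a maximum matching $M$, assume for contradiction that it misses some $x_0\in X$, let $Z$ be the set of vertices reachable from $x_0$ by $M$-alternating paths, and use Berge's theorem (no $M$-augmenting path exists) to conclude that every vertex of $Z\cap Y$ is matched to a vertex of $Z\cap X$; then $W:=Z\cap X$ would satisfy $N(W)=Z\cap Y$ and $|N(W)|=|W|-1$, contradicting Hall's condition.
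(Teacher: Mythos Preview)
Your argument is correct; this is the standard Halmos--Vaughan induction proof of Hall's theorem, and the alternating-path alternative you sketch at the end is the other textbook route. There is nothing to compare, however: the paper does not prove Theorem~\ref{teo-hall} at all. It is stated (together with Berge's Theorem~\ref{berge-teo}) as a classical result quoted from the literature and then used as a black box in Lemmas~\ref{arc-in-hall}--\ref{hall-x-y-emp}. So your proposal supplies a proof where the paper intentionally gives none.
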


\begin{theorem}[Berge, 1957]
\label{berge-teo}
A matching $M$ in a graph $G$ is a maximum matching if and only if $G$ has no $M$-augmenting path.
\end{theorem}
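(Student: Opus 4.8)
The plan is to prove both directions by contraposition, in each case using symmetric differences of matchings.

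For the forward direction, suppose $G$ has an $M$-augmenting path $P = v_0 v_1 \dots v_{2t+1}$, so that $v_0 v_1, v_2 v_3, \dots, v_{2t}v_{2t+1} \notin M$, the edges $v_1 v_2, v_3 v_4, \dots, v_{2t-1}v_{2t} \in M$, and neither $v_0$ nor $v_{2t+1}$ is covered by $M$. I would set $M' := M \triangle E(P)$, that is, delete from $M$ the $t$ edges of $P$ lying in $M$ and add the $t+1$ edges of $P$ lying outside $M$. A short local check shows that $M'$ is again a matching: only vertices on $P$ have their incident edges altered, and along $P$ the alternation, together with the fact that $v_0$ and $v_{2t+1}$ were not covered by $M$, prevents any vertex from becoming incident to two edges of $M'$. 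Since $|M'| = |M| - t + (t+1) = |M| + 1$, the matching $M$ is not maximum.

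For the reverse direction, suppose $M$ is not maximum and fix a matching $M'$ with $|M'| > |M|$. I would consider the subgraph $H$ of $G$ with $V(H) = V(G)$ and $E(H) = M \triangle M'$. Each vertex of $H$ is incident to at most one edge of $M$ and at most one edge of $M'$, hence has degree at most $2$ in $H$, so every connected component of $H$ is a path or a cycle whose edges alternate between $M$ and $M'$. A cycle component has equally many $M$-edges and $M'$-edges, and so does a path component of even length; since $|M' \setminus M| > |M \setminus M'|$, some path component $P$ must contain more $M'$-edges than $M$-edges, which forces $P$ to have odd length with both of its end edges in $M'$. It then remains to check that $P$ is $M$-augmenting, i.e. that its endpoints $u$ and $v$ are not covered by $M$ in $G$: if $u$ were covered by an edge $e \in M$, then $e$ is distinct from the end edge of $P$ at $u$ (which lies in $M' \setminus M$), so $e \in M \setminus M' \subseteq E(H)$, and $u$ would have degree at least $2$ in $H$, contradicting that $u$ is an endpoint of the path component $P$; similarly for $v$. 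Hence $P$ is an $M$-augmenting path.

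The only slightly delicate points are the two bookkeeping steps — that $M \triangle E(P)$ is genuinely a matching in the forward direction, and that the endpoints of the alternating path $P$ in the reverse direction fail to be covered by $M$ in all of $G$, not merely in $H$ — and each of these reduces to the degree-at-most-$2$ structure of the relevant subgraph together with the hypothesis on the uncovered endpoints. I do not anticipate any genuine obstacle beyond carrying out these checks carefully.
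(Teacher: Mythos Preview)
Your proof is correct and is the standard symmetric-difference argument. Note, however, that the paper does not actually prove this statement: Theorem~\ref{berge-teo} is quoted as Berge's classical 1957 result and is used as a black box (specifically in the proof of Lemma~\ref{hall-x-y-emp}), so there is no paper proof to compare against. Your write-up is fine as a self-contained verification of the cited theorem.
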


\begin{lemma}
\label{arc-in-hall}
Let $ G:=G[X,Y]$ be a bipartite graph. If $G$ has no matching covering $X$, then there exists a non-empty subset $X' \subseteq X$ such that $G[X' \cup N(X')]$ has a matching covering $N(X')$.
\end{lemma}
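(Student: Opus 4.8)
The plan is to deduce the lemma from Hall's theorem (Theorem~\ref{teo-hall}) together with a minimality trick. Since $G$ has no matching covering $X$, Hall's theorem guarantees a set $W\subseteq X$ with $|N(W)|<|W|$; any such set is non-empty. Among all sets $W\subseteq X$ satisfying $|N(W)|<|W|$, fix one that is minimal under inclusion and set $X':=W$. Then $X'$ is a non-empty subset of $X$, so it remains only to show that $G[X'\cup N(X')]$ admits a matching covering $N(X')$. Viewing $G[X'\cup N(X')]$ as a bipartite graph with parts $N(X')$ and $X'$, Hall's theorem reduces this to checking that $|N(Z)\cap X'|\geq |Z|$ for every $Z\subseteq N(X')$, where $N(Z)\cap X'$ is exactly the neighbourhood of $Z$ inside $G[X'\cup N(X')]$.

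For the main step I would argue by contradiction. Suppose some $Z\subseteq N(X')$ satisfies $|N(Z)\cap X'|<|Z|$; then $Z\neq\emptyset$. Put $W_0:=X'\setminus N(Z)$, the set of vertices of $X'$ having no neighbour in $Z$. Since no vertex of $W_0$ is adjacent to $Z$, we get $N(W_0)\subseteq N(X')\setminus Z$, hence $|N(W_0)|\leq |N(X')|-|Z|$, while $|W_0|=|X'|-|N(Z)\cap X'|>|X'|-|Z|$. Subtracting the two estimates yields
\[
|N(W_0)|-|W_0| \;<\; (|N(X')|-|Z|)-(|X'|-|Z|) \;=\; |N(X')|-|X'| \;<\; 0 .
\]
In particular $W_0\neq\emptyset$, so $W_0$ is a non-empty subset of $X'$ with $|N(W_0)|<|W_0|$. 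By the minimality of $X'$ this forces $W_0=X'$, i.e., $N(Z)\cap X'=\emptyset$; but that is absurd, because $Z$ is a non-empty subset of $N(X')$ and hence contains a vertex adjacent to $X'$. This contradiction establishes the Hall condition, and Hall's theorem then supplies the required matching covering $N(X')$ in $G[X'\cup N(X')]$.

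The heart of the argument is the two-line deficiency computation in the display, used in tandem with the choice of $X'$ as an inclusion-minimal violator of Hall's condition; this is the step I expect to need the most care. Everything else is routine: $X'\neq\emptyset$ is immediate from $|N(X')|<|X'|$, and when $N(X')=\emptyset$ the empty matching already covers $N(X')$. For completeness, an alternative proof uses Berge's theorem (Theorem~\ref{berge-teo}) directly: let $M$ be a maximum matching of $G$, let $U\neq\emptyset$ be the set of vertices of $X$ missed by $M$, and let $X'$ be the set of vertices of $X$ reachable from $U$ by $M$-alternating paths. One checks that $N(X')$ coincides with the set of vertices of $Y$ reachable from $U$, that by the absence of an $M$-augmenting path each such vertex is saturated by $M$, and that its $M$-partner again lies in $X'$; hence the edges of $M$ incident to $N(X')$ form a matching of $G[X'\cup N(X')]$ covering $N(X')$. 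Either route works; I would choose whichever aligns best with the two companion lemmas, but the Hall-minimality version is the more self-contained.
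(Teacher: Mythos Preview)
Your proof is correct. Both your argument and the paper's hinge on Hall's theorem together with a minimal Hall-violating set $W\subseteq X$, but the executions diverge slightly. The paper takes $W$ of minimum cardinality and then passes to a \emph{proper} subset $X'\subset W$ with $|X'|=|N(W)|$; minimality of $W$ makes Hall's condition for $X'$ immediate (every subset of $X'$ is a proper subset of $W$), and the resulting matching saturating $X'$ is forced to be a perfect matching between $X'$ and $N(X')=N(W)$. You instead set $X'=W$ itself (inclusion-minimal) and verify Hall's condition from the $N(X')$ side via the deficiency computation $|N(W_0)|-|W_0|<|N(X')|-|X'|<0$. Your route keeps the output set equal to the violator and avoids the extra passage to a subset, at the price of a slightly longer Hall verification; the paper's route makes the Hall check a one-liner but needs the observation that the matching covering $X'$ must in fact be perfect. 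The alternating-path alternative you sketch (essentially the K\"onig--Egerv\'ary argument) is a genuinely different, equally valid approach that the paper does not use.
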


\begin{proof}
Assume that there exists no matching covering $X$ in $G$. By Theorem~\ref{teo-hall}, there exists a subset $W$ of $X$ such that $ \vert N(W)\vert <\vert W\vert $; choose such $W$ as small as possible. By the choice of $W$, for every $X' \subset W$ (and hence, for $X' \subset X$), it follows that $\vert N(X')\vert  \geq \vert X'\vert $. Let $X'$ be a subset of $W$ with the same size as $\vert N(W)\vert $. Since for every $X^* \subseteq X'$, it follows that $\vert N(X^*)\vert  \geq \vert X^*\vert $, we conclude by Theorem~\ref{teo-hall} that the graph $G[X' \cup N(X')]$ has a matching covering $X'$ (and hence, $N(W)$).
\end{proof}

\begin{lemma}
\label{arc-in-matching-N(S)}
Let $S$ be a maximum stable set in a digraph $D$. Let $X$ be a stable set disjoint from $S$ and let $Y = N(X) \cap S$. Then, there exists a matching between $X$ and $Y$ covering $X$.
\end{lemma}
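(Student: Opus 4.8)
The plan is to apply Hall's theorem (Theorem~\ref{teo-hall}) to the bipartite graph $G := U(D)[X,Y]$, so the goal is to verify $|N_G(W)| \geq |W|$ for every $W \subseteq X$. Here $N_G(W)$ is the set of vertices in $Y = N(X) \cap S$ that are adjacent in $D$ to some vertex of $W$; note that in $D$ we have $N_G(W) = N(W) \cap S$ since $W \subseteq X$ and every vertex of $Y$ adjacent to $W$ lies in $S$. The key observation is that $S$ is a \emph{maximum} stable set, so I would argue by an exchange/augmentation contradiction: if some $W$ violated Hall's condition, I could swap $W$ into the stable set in place of $N(W) \cap S$ and obtain a strictly larger stable set.

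Concretely, suppose for contradiction that there is a non-empty $W \subseteq X$ with $|N(W) \cap S| < |W|$. Consider the set $S' := (S \setminus N(W)) \cup W$. First I would check that $S'$ is stable: the vertices of $W$ are pairwise non-adjacent because $X$ is a stable set; the vertices of $S \setminus N(W)$ are pairwise non-adjacent because $S$ is stable; and no vertex of $W$ is adjacent to any vertex of $S \setminus N(W)$ — indeed, any vertex of $S$ adjacent to $W$ belongs to $N(W)$, hence has been removed. (Vertices of $W$ adjacent to vertices of $S \setminus N(W)$ would have to have a neighbor in $S$; but such a neighbor is necessarily in $N(W) \cap S$, which has been deleted.) Then I would compute $|S'| = |S| - |N(W) \cap S| + |W| > |S|$, using $W \cap S = \emptyset$ (so the added vertices are genuinely new) and the fact that $N(W) \cap S \subseteq S$ consists precisely of the deleted vertices of $S$ that have a neighbor in $W$. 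This contradicts the maximality of $S$, so Hall's condition holds and $G$ admits a matching covering $X$.

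The main point requiring care is the stability verification for $S'$, specifically making sure that after removing $N(W) \cap S$ from $S$ no vertex of $W$ retains a neighbor inside $S \setminus N(W)$ — this is exactly where the definition $Y = N(X) \cap S$ and the hypothesis that $X$ is stable and disjoint from $S$ are used. Everything else is a direct counting argument plus an invocation of Theorem~\ref{teo-hall}. I do not expect to need Berge's theorem (Theorem~\ref{berge-teo}) for this particular lemma, though it presumably supports the neighboring lemmas in the section.
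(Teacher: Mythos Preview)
Your proposal is correct and follows essentially the same approach as the paper: assume Hall's condition fails for some $W\subseteq X$, replace $N(W)\cap S$ by $W$ inside $S$ to obtain a strictly larger stable set, and derive a contradiction. The paper's proof is a terser version of exactly this exchange argument; your added verification that $(S\setminus N(W))\cup W$ is stable and your cardinality bookkeeping simply spell out the details the paper leaves implicit.
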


\begin{proof}
Towards a contradiction, assume that there exists no matching between $X$ and $Y$ covering $X$. By Theorem~\ref{teo-hall}, there exists a subset $W$ of $X$ such that $\vert N(W)\vert <\vert W\vert $. Since $X \cap S = \emptyset$, it follows that $(S-N(W)) \cup W$ is a stable set larger than $S$ in $D$, a contradiction.
\end{proof}

\begin{lemma}
\label{hall-x-y-emp}
Let $G:=G[X,Y]$ be a bipartite graph which has a matching covering $X$. Then, for every $Y' \subset Y$, there exists a matching $M$ covering $X$ such that the restriction of $M$ to $G[X' \cup Y']$, where $X'=N(Y')$, is a maximum matching of $G[X' \cup Y']$. 
\end{lemma}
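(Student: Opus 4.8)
The plan is to take, among all matchings of $G$ covering $X$, one that maximizes the number of its edges lying inside $H := G[X' \cup Y']$, and to show that this choice automatically makes the restriction a maximum matching of $H$. Such a matching $M$ exists: by hypothesis the family of matchings of $G$ covering $X$ is non-empty, and it is finite. Note first that every edge of $H$ joins $X'$ to $Y'$ (because $G$ is bipartite with parts $X$ and $Y$, while $X' \subseteq X$ and $Y' \subseteq Y$), so $H$ is bipartite with bipartition $(X', Y')$; moreover, by the definition $X' = N(Y')$, every edge of $G$ incident to a vertex of $Y'$ lies in $E(H)$.

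Suppose, for a contradiction, that $M \cap E(H)$ is not a maximum matching of $H$. By Berge's theorem (Theorem~\ref{berge-teo}) applied to $H$, there is an $(M \cap E(H))$-augmenting path $P$ in $H$; as $H$ is bipartite, the endpoints of $P$ lie on opposite sides, say $u \in X'$ and $w \in Y'$, and neither is covered by $M \cap E(H)$. Two observations drive the argument. First, since $M$ covers $X \supseteq X'$, the vertex $u$ is covered by some edge $uv \in M$; as $u$ is not covered by $M \cap E(H)$ we have $uv \notin E(H)$, hence $v \notin Y'$, i.e. $v \in Y \setminus Y'$, and since $E(P) \subseteq E(H)$ also $uv \notin E(P)$. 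Second, $w$ is not covered by $M$ at all: any edge of $M$ at $w$ would be incident to $w \in Y'$, hence would lie in $E(H)$ and cover $w$ in $H$, a contradiction.

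Now set $M' := (M \,\triangle\, E(P)) \setminus \{uv\}$ and check three things. (a) $M'$ is a matching of $G$: in $M \,\triangle\, E(P)$ the only vertex covered twice is $u$ (once by its neighbour along $P$, once by $uv$), and removing $uv$ repairs this, while the other endpoint $w$ of $P$ creates no clash since it is $M$-uncovered. (b) $M'$ covers $X$: flipping along $P$ keeps every $X$-vertex of $P$ covered --- the internal ones remain matched and $u$ becomes matched to its neighbour on $P$ --- while deleting $uv$ uncovers only $v \in Y$ besides $u$, which is re-covered, and all $X$-vertices off $P$ are untouched. (c) $|M' \cap E(H)| = |M \cap E(H)| + 1$, since $uv \notin E(H)$ and augmenting along $P$ adds one edge inside $H$. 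Item (c) contradicts the maximality of $M$, so $M \cap E(H)$ is a maximum matching of $H$ and $M$ is the required matching. The only delicate point is the interplay in (a) and (b) between the augmentation inside $H$ and the edge $uv$ of $M$ leaving $H$: once one sees that $u$ must be matched by $M$ outside $H$ and that discarding that edge does not uncover any vertex of $X$, the rest is routine alternating-path bookkeeping.
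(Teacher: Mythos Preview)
Your proof is correct and follows essentially the same approach as the paper: choose among matchings covering $X$ one maximizing the number of edges inside $H=G[X'\cup Y']$, apply Berge's theorem to get an augmenting path with one endpoint in $X'$ (necessarily $M$-matched to some vertex of $Y\setminus Y'$) and one in $Y'$ (necessarily $M$-uncovered since $X'=N(Y')$), then flip along the path and discard that stray $M$-edge to obtain a contradiction. The only differences are notational (you name the $X'$-endpoint $u$ rather than $v$) and that you spell out the verification of (a)--(c) more carefully than the paper does.
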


\begin{proof}
Let $Y' \subset Y$. Let $H:=H[X',Y']$ be a bipartite subgraph corresponding to $G[X' \cup Y']$, where $X'=N(Y')$. Let $M$ be a matching covering $X$ such that $\vert M\cap E(H)\vert $ as maximum as possible. Let $M'= M \cap E(H)$. Towards a contradiction, assume that $M'$ is not maximum in $H$. By Theorem~\ref{berge-teo}, there exists an $M'$-augmenting path $uPv$ in $H$. Since $P$ is odd, we may assume that $u \in Y'$ and $v \in X'$. Since $M$ covers $X$, there exists $w \in Y-Y'$ such that $wv \in M$. Since $X'=N(Y')$, $u$ is not covered by an edge of $M$. Thus $M^* = ((M - E(P)) \cup (E(P)-M)) - wv$ is a maximum matching covering $X$ such that $\vert M^* \cap E(H)\vert  > \vert M \cap E(H)\vert $, a contradiction.  
\end{proof}

The next two lemmas are important tools that we use in the forthcoming sections.

\begin{lemma}
\label{arc-in-n-cobre-S}
Let $D$ be a digraph such that every proper induced subdigraph of $D$ satisfies the BE-property. Let $S$ be a maximum stable set of $D$. If there exists no matching between $S$ and $ N(S)$ covering $S$, then $D$ has an $S_{BE}$-path partition.	
\end{lemma}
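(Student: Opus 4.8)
The plan is to exploit the hypothesis that $D$ has no matching between $S$ and $N(S)$ covering $S$, together with Lemma~\ref{arc-in-hall}, to extract a well-behaved substructure and then build the desired $S_{BE}$-path partition by hand. First I would set $X := N(S)$ and observe that, since $S$ is a maximum stable set, $X$ need not be stable, but the bipartite graph $G := U(D)[S, N(S)]$ is the natural object. Applying Lemma~\ref{arc-in-hall} (with the roles of the parts chosen so that $S$ plays the role of the part not covered), there is a non-empty $S' \subseteq S$ such that $G[S' \cup N(S')]$ has a matching $M$ covering $N(S')$. Write $Z := N(S')$ (the neighborhood taken in $G$, i.e. the subset of $N(S)$ adjacent to $S'$); then $|Z| < |S'|$ and $M$ matches every vertex of $Z$ to a distinct vertex of $S'$.

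Next I would pass to the proper induced subdigraph $D' := D - S'$. Since $|S'| \geq 1$, $D'$ is a proper induced subdigraph, so by hypothesis it satisfies the BE-property. The key point to verify is that $S - S'$ is a \emph{maximum} stable set of $D'$: it is certainly stable, and if $D'$ had a larger stable set $T$, then $T$ together with the vertices of $S'$ not adjacent to $T$ would contradict maximality of $S$ in $D$ — here one uses that every vertex of $S'$ has all of its neighbors inside $Z = N(S')$, and $|Z| < |S'|$, so at least one vertex of $S'$ survives, giving a stable set of size $\ge |T| + 1 > |S|$ once the bookkeeping with $Z$ is done carefully. (This is the same averaging trick as in Lemma~\ref{arc-in-matching-N(S)}, and is where I expect to have to be most careful.) Hence $D'$ has an $S_{BE}$-path partition $\sP'$ orthogonal to $S - S'$, in which every vertex of $S - S'$ is an endpoint of its path.

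Finally I would reinsert $S'$. For each $v \in S'$, I want to attach $v$ either as a new singleton path or to the start/end of an existing path so that orthogonality and the begin/end condition are preserved. The matching $M$ suggests the mechanism: using $M$, match each vertex $z \in Z$ to its partner $s_M(z) \in S'$; the remaining $|S'| - |Z| \geq 1$ vertices of $S'$ have no neighbor at all outside $S$ within the relevant structure and can be added as singleton paths (a singleton trivially satisfies the BE-property). For a vertex $v = s_M(z)$, I would locate the path $P \in \sP'$ containing $z$; if $z$ is an endpoint of $P$ I can prepend or append $v$ to $P$ using the edge between $v$ and $z$, and the unique vertex of $(S-S') \cap V(P)$ remains the other endpoint — but if $z$ is an \emph{interior} vertex of $P$, simple attachment fails. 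The main obstacle is precisely this: controlling where the matched vertices of $Z$ sit on the paths of $\sP'$. I would handle it by invoking Lemma~\ref{hall-x-y-emp} to arrange, before deleting $S'$, that the matching behaves well, and by splitting $P$ at $z$ into two subpaths, using one piece to carry $v$ as an endpoint and re-routing the orthogonal vertex of $S-S'$ — or, more robustly, by choosing $S'$ and $Z$ minimally so that each $z \in Z$ is forced to be an endpoint of its path in $\sP'$ (if $z$ were interior, the two half-paths would give two paths each needing a distinct vertex of $S - S'$, contradicting orthogonality counts since $z \notin S$). Assembling these pieces yields a path partition of $D$ orthogonal to $S$ in which every vertex of $S$ is a path endpoint, i.e. an $S_{BE}$-path partition, as required.
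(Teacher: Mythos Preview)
Your overall setup (pass to the bipartite graph between $S$ and $N(S)$ and invoke Lemma~\ref{arc-in-hall}) matches the paper, but the crucial step is taken on the wrong side: you delete $S'\subseteq S$, whereas the paper deletes $N(X)$ for the subset $X\subseteq S$ produced by Lemma~\ref{arc-in-hall}. This difference is not cosmetic; it is exactly what makes the argument work.

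With your deletion, the claim that $S-S'$ is a maximum stable set of $D'=D-S'$ is false in general. Take $S=\{s_1,s_2\}$, $S'=S$, and $Z=N(S')=\{z\}$ with $z$ adjacent only to $s_1$. Then $D'=D-S$ contains $z$, so $\alpha(D')\ge 1>0=|S-S'|$. Your proposed contradiction (``add to $T$ the vertices of $S'$ not adjacent to $T$'') only yields a stable set of size $|T|+|S'|-|T\cap Z|$ at best, and since you merely know $|T|>|S|-|S'|$ and $|Z|<|S'|$, this need not exceed $|S|$. The subsequent reinsertion step is then moot, and your own remarks about interior vertices of $Z$ confirm that even this second phase has no clean resolution along these lines.

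The paper avoids both obstacles at once by setting $D'=D-N(X)$. Then $S$ itself is trivially still a maximum stable set of $D'$ (nothing in $S$ was removed), and every vertex of $X$ becomes isolated in $D'$, hence is a singleton path in any $S_{BE}$-path partition $\sP'$ of $D'$. One then simply replaces each such singleton $x\in X\cap V(M)$ by the length-one path given by its matching edge in $M$, recovering all of $N(X)$ and producing an $S_{BE}$-path partition of $D$. No maximality argument for a smaller stable set and no reinsertion surgery are needed.
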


\begin{proof}
Let $H$ be the bipartite digraph obtained from $D[S \cup N(S)]$ by removing all edges connecting vertices in $N(S)$. Since there exists no matching covering $S$ in $H$, by Lemma~\ref{arc-in-hall} there exists $X \subset S$ such that $H[X \cup N(X)]$ has a matching $M$ covering $N(X)$. Let $D'=D-N(X)$. Since $N(X) \cap S = \emptyset$, $S$ is a maximum stable set in $D'$. By hypothesis, $D'$ is BE-diperfect. Let $\sP'$ be a $ S_{BE}$-path partition of $D'$. Since $V(D')\cap N(X)=\emptyset$, every vertex in $X$ is a path in $\sP'$. Let $\sP_M$ be the set of paths in $D$ corresponding to the edges in $M$. Thus the collection $(\sP' - (X \cap V(M))) \cup \sP_M$ is an $S_{BE}$-path partition of $D$. 
\end{proof}

Since an $S_{BE}$-path partition is also an $S$-path partition, we conclude the following result.

\begin{lemma}
\label{arc-in-n-cobre-S-berge}
Let $D$ be a digraph such that every proper induced subdigraph of $D$ satisfies the $\alpha$-property and let $S$ be a maximum stable set of $D$. If there is no matching between $S$ and $ N(S)$ covering $S$, then $D$ has an $S$-path partition.	
\qed
\end{lemma}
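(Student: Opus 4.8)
The plan is to reuse, essentially verbatim, the construction from the proof of Lemma~\ref{arc-in-n-cobre-S}, the only change being that the proper induced subdigraph produced there is now only required to supply an $S$-path partition rather than an $S_{BE}$-path partition. Since an $S_{BE}$-path partition is in particular an $S$-path partition, dropping the Begin-End bookkeeping makes the argument strictly easier, and the hypothesis that every proper induced subdigraph satisfies the $\alpha$-property is exactly what is needed in place of the BE-diperfectness used before.

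Concretely, I would first let $H$ be the bipartite digraph obtained from $D[S\cup N(S)]$ by deleting every edge with both ends in $N(S)$. Since $S$ is stable, the $H$-neighborhood of any $X\subseteq S$ equals its neighborhood $N(X)$ in $D$, and $N(X)\cap S=\emptyset$. By hypothesis $H$ has no matching covering $S$, so Lemma~\ref{arc-in-hall} yields a non-empty $X\subset S$ for which $H[X\cup N(X)]$ has a matching $M$ covering $N(X)$. Next I would set $D'=D-N(X)$; as $N(X)\cap S=\emptyset$, the set $S$ remains a maximum stable set of $D'$, and $D'$ is a proper induced subdigraph of $D$, hence satisfies the $\alpha$-property, so it has an $S$-path partition $\sP'$. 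Every vertex of $X$ has all of its neighbors inside $N(X)$, so it is isolated in $D'$ and therefore occurs in $\sP'$ as a trivial path. Letting $\sP_M$ be the family of length-one paths of $D$ corresponding to the edges of $M$, I would output $\sP=(\sP'-(X\cap V(M)))\cup\sP_M$: discard the trivial paths on the vertices of $X$ covered by $M$ and replace them by the $M$-paths.

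It then remains to check that $\sP$ is an $S$-path partition of $D$, which is the only point that needs care. Coverage and disjointness are routine: the paths of $\sP'$ cover $V(D)-N(X)$, removing the trivial paths on $X\cap V(M)$ leaves $V(D)-N(X)-(X\cap V(M))$ covered, and $\sP_M$ covers exactly $N(X)\cup(X\cap V(M))$. For orthogonality, each $M$-path meets $S$ in exactly one vertex (its end lying in $X\subseteq S$, the other end lying in $N(X)$, which is disjoint from $S$), each surviving path of $\sP'$ still meets $S$ in exactly one vertex, and no vertex of $S$ is left uncovered because the deleted trivial paths were exactly those on $X\cap V(M)$, which $\sP_M$ re-covers. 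I expect no real obstacle; the only subtlety, inherited from Lemma~\ref{arc-in-n-cobre-S}, is that $M$ covers $N(X)$ but possibly not all of $X$, which is precisely why one removes the trivial paths on $X\cap V(M)$ rather than on all of $X$.
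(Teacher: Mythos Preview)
Your proposal is correct and takes essentially the same approach as the paper. The paper does not even write out a separate proof for this lemma: it simply remarks that an $S_{BE}$-path partition is in particular an $S$-path partition and concludes Lemma~\ref{arc-in-n-cobre-S-berge} from the proof of Lemma~\ref{arc-in-n-cobre-S}, which is precisely the argument you have unpacked in detail.
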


The next lemma is very important and will be used extensively throughout this paper.
  
\begin{lemma}
\label{stable_set_S_menor_vizinhanca}
Let $D$ be a digraph such that every proper induced subdigraph of $D$ satisfies the BE-property. If $D$ has a stable set $Z$ such that $\vert N(Z)\vert  \leq \vert Z\vert $, then $D$ satisfies the BE-property.
\end{lemma}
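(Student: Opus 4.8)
The plan is to reduce the statement to an application of Lemma~\ref{arc-in-n-cobre-S}. Let $D$ be a digraph such that every proper induced subdigraph of $D$ satisfies the BE-property, and suppose $D$ has a stable set $Z$ with $\vert N(Z)\vert \leq \vert Z\vert$. We must show that $D$ satisfies the BE-property, i.e.\ that every maximum stable set $S$ of $D$ admits an $S_{BE}$-path partition. So fix a maximum stable set $S$ of $D$. If there is no matching between $S$ and $N(S)$ covering $S$, then we are done immediately by Lemma~\ref{arc-in-n-cobre-S}. Hence we may assume such a matching exists; the work is to handle this case using the hypothetical ``small-neighborhood'' stable set $Z$.

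First I would observe that we may take $Z$ to be disjoint from $S$ and in fact as ``large'' as possible relative to its neighborhood: among all stable sets $Z$ with $\vert N(Z)\vert \leq \vert Z\vert$, pick one minimizing $\vert N(Z)\vert$, or better, pick one so that $Z \cup N(Z)$ is as small as possible — this will let me peel off $D[Z \cup N(Z)]$ and induct. The key structural point is that since $S$ is a maximum stable set and $Z$ is a stable set with $\vert N(Z)\vert \leq \vert Z\vert$, the set $(S \setminus N(Z)) \cup Z$ — after replacing $Z$ by $Z \setminus S$ — is again a stable set; comparing cardinalities with $S$ forces $\vert N(Z) \cap S\vert \geq \vert Z \setminus S\vert$, and combined with $\vert N(Z)\vert \leq \vert Z\vert$ this pins down that $N(Z)$ is ``mostly'' inside $S$ and that there is a perfect matching saturating $Z\setminus S$ into $N(Z)\cap S$ (via Lemma~\ref{arc-in-matching-N(S)} applied to $X = Z\setminus S$). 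I would then set $X' := N(Z) \setminus S$ and consider the subdigraph $D' := D - (Z \cup N(Z))$ together with the stable set $S' := S \setminus N(Z)$. One checks $S'$ is a maximum stable set of $D'$: any larger stable set of $D'$ could be combined with $Z$ to beat $S$ in $D$, using $\vert N(Z)\vert \leq \vert Z\vert$ and the matching just produced.

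Next, since $D'$ is a proper induced subdigraph of $D$ (the hypothesis $\vert N(Z)\vert \leq \vert Z\vert$ together with $Z\neq\emptyset$ guarantees $Z\cup N(Z)\neq V(D)$ — here one must rule out the degenerate possibility that $Z = V(D)$, but then $N(Z)=\emptyset$ and $D$ has no edges, so it trivially satisfies the BE-property), it satisfies the BE-property, so there is an $S'_{BE}$-path partition $\sP'$ of $D'$. I would then extend $\sP'$ to an $S_{BE}$-path partition of $D$ by attaching each vertex of $N(Z)\cap S$ to the vertex of $Z\setminus S$ matched to it, forming paths of length one, and absorbing the remaining vertices of $Z \cup N(Z)$ (those in $X' = N(Z)\setminus S$ and the unmatched part of $Z$) into these paths or into $\sP'$ as internal vertices. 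The matched pairs give paths whose one endpoint lies in $S$, as required for the BE-condition. Every vertex of $S$ is either in $S'$ (handled by $\sP'$) or in $N(Z)\cap S$ (handled by the new length-one paths), and orthogonality is maintained because $\sP'$ is orthogonal to $S'$ and each new path contains exactly one vertex of $S$.

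The main obstacle I anticipate is the bookkeeping in the last step: showing that the leftover vertices of $Z \cup N(Z)$ not used as matched endpoints can be simultaneously (i) packed into paths, (ii) without creating a path containing two vertices of $S$ or no vertex of $S$, and (iii) respecting the Begin-End requirement that the $S$-vertex be an endpoint. The cleanest route is probably to choose $Z$ minimal so that $N(Z) \subseteq S$ and $\vert N(Z)\vert = \vert Z\vert$ holds with a perfect matching between them (this minimality argument, mirroring the proof of Lemma~\ref{arc-in-hall}, is what makes the leftover set empty or trivially manageable), so that $D[Z \cup N(Z)]$ decomposes exactly into the matching edges — then each such edge is a path of length one with its $S$-endpoint serving as both the unique $S$-vertex and an endpoint, and the extension is immediate. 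Verifying that such a minimal $Z$ exists and that the corresponding $D' = D - (Z \cup N(Z))$ really is proper and has $S \setminus N(Z)$ as a maximum stable set is the crux.
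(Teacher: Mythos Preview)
Your overall skeleton matches the paper's: fix a maximum stable set $S$, reduce via Lemma~\ref{arc-in-n-cobre-S} to the case where a matching covering $S$ into $N(S)$ exists, use Lemma~\ref{arc-in-matching-N(S)} to match $Z\setminus S$ into $N(Z)\cap S$, delete $Z\cup N(Z)$, show $S\setminus (Z\cup N(Z))$ is maximum in the remainder, and induct. The gap is precisely the part you flagged as the ``main obstacle'': disposing of the leftover vertices $Z\cap S$ and $N(Z)\setminus S$.

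Your proposed fix --- choosing $Z$ minimal so that $N(Z)\subseteq S$ --- does not work. The hypothesis only hands you \emph{some} stable $Z$ with $\lvert N(Z)\rvert\le\lvert Z\rvert$; no minimality argument forces $N(Z)$ into $S$, since $S$ played no role in the choice of $Z$. Moreover, the leftover vertices in $Z\cap S$ are vertices of $S$ and so cannot be ``absorbed as internal vertices'' without violating the BE condition. (There is also a small bug: your $S':=S\setminus N(Z)$ may still contain vertices of $Z\cap S$, which are not in $D'$.)

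The paper resolves this with a second matching rather than a minimality trick. Having already assumed a matching between $S$ and $N(S)$ covering $S$, restrict it to $Z\cap S$: since $N(Z\cap S)\subseteq N(Z)\setminus S$, this gives a matching $M_1$ covering $Z\cap S$ into $N(Z)\setminus S$, whence $\lvert Z\cap S\rvert\le\lvert N(Z)\setminus S\rvert$. You already derived the reverse inequality from maximality of $S$ and $\lvert N(Z)\rvert\le\lvert Z\rvert$, so $\lvert Z\cap S\rvert=\lvert N(Z)\setminus S\rvert$ and, symmetrically, $\lvert Z\setminus S\rvert=\lvert N(Z)\cap S\rvert$. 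Hence $M_1$ and your $M_2$ are both \emph{perfect} on their respective parts, and $M=M_1\cup M_2$ is a perfect matching between $Z$ and $N(Z)$ in which every edge has exactly one endpoint in $S$. Now there are no leftover vertices at all: the edges of $M$ are the length-one paths, and $\sP'\cup \sP_M$ is the desired $S_{BE}$-path partition.
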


\begin{proof}
Let $S$ be a maximum stable set of $D$. Since $S$ is arbitrary, to show that $D$ satisfies the BE-property it suffices to show that $D$ has an $S_{BE}$-path partition. First, we prove that there exists a perfect matching between $Z$ and $N(Z)$. Let $Y=N(Z)$. Since $S$ is maximum, then $\vert Z-S\vert  \leq \vert Y \cap S\vert $. Since $\vert Z\vert  \geq \vert Y\vert $, this implies that $\vert Z \cap S\vert  \geq \vert Y-S\vert $. By Lemma~\ref{arc-in-n-cobre-S}, we may assume that there exists a matching $M_1$ between $Z \cap S$ and $Y-S$ covering $Z \cap S$. Since $\vert Z\vert  \geq \vert Y\vert $ and $\vert Z-S\vert  \leq \vert Y \cap S\vert $, it follows that $\vert Z \cap S\vert =\vert Y-S\vert $ and $\vert Z-S\vert =\vert Y \cap S\vert $. By Lemma~\ref{arc-in-matching-N(S)}, there exists a matching $M_2$ between $Z-S$ and $Y \cap S$ covering $Z-S$. Thus, the matching $M = M_1 \cup M_2$ is a perfect matching between $Z$ and $Y$. Let $\sP_M$ be the set of paths in $D$ corresponding to the edges of $M$. Note that $\sP_M$ and $S$ are orthogonal. Let $S' = S - V(M)$ and let $D' = D- V(M)$. Let $k = \vert S \cap V(M)\vert  = \vert Z\vert $ and note that $\vert S'\vert =\vert S\vert -k$. Assume that $S'$ is not a maximum stable set of $D'$. Let $S^*$ be a maximum stable set of $D'$. Since $\vert S^*\vert  > \vert S\vert -k$ and $V(D') \cap (Z \cup Y)= \emptyset$, it follows that $S^* \cup Z$ is a stable set larger than $S$ in $D$, a contradiction. By hypothesis, $D'$ is BE-diperfect. Let $\sP'$ be an $S'_{BE}$-path partition of $D'$. Thus the collection $\sP' \cup \sP_M$ is an $S_{BE}$-path partition of $D$.
\end{proof}

The next two theorems state that minimal counterexamples to Conjectures \ref{conj_berge} and \ref{conj_be} cannot have large stability number.

\begin{theorem}
\label{arc-in-maior-S}
Let $D$ be a digraph such that every proper induced subdigraph of $D$ satisfies the BE-property. If $\alpha(D) \geq \frac {\vert V(D)\vert }{2}$, then $D$ satisfies the BE-property.
\end{theorem}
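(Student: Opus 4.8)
The plan is to fix an arbitrary maximum stable set $S$ of $D$ and exhibit an $S_{BE}$-path partition; since $S$ is arbitrary, this establishes the BE-property. Write $n=\vert V(D)\vert$, so the hypothesis gives $\vert S\vert=\alpha(D)\geq n/2$, equivalently $\vert V(D)\setminus S\vert = n-\vert S\vert \leq n/2 \leq \vert S\vert$.

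First I would dispose of one case using Lemma~\ref{arc-in-n-cobre-S}: if there is no matching between $S$ and $N(S)$ covering $S$, then $D$ already has an $S_{BE}$-path partition and we are done. So from now on assume such a matching $M$ exists. The crucial step is a pigeonhole squeeze. Since $M$ is a matching between $S$ and $N(S)$ that covers $S$, each edge of $M$ covers exactly one vertex of $S$, hence $\vert M\vert=\vert S\vert$ and $\vert N(S)\vert\geq\vert M\vert=\vert S\vert\geq n/2$. On the other hand $N(S)\subseteq V(D)\setminus S$, so $\vert N(S)\vert\leq n-\vert S\vert\leq n/2$. Therefore all these inequalities are equalities: $\vert S\vert=\vert N(S)\vert=n/2$, so $N(S)=V(D)\setminus S$, and $\vert M\vert=\vert N(S)\vert$, so $M$ also covers $N(S)$. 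Consequently $M$ covers $S\cup N(S)=V(D)$, i.e. $M$ is a perfect matching of $U(D)$ pairing each vertex of $S$ with a distinct vertex of $V(D)\setminus S$.

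To finish, let $\sP_M$ be the path partition of $D$ whose members are the length-one paths corresponding to the edges of $M$. Each such path joins a vertex of $S$ to a vertex of $N(S)=V(D)\setminus S$; since $S$ is stable, no path contains two vertices of $S$, so $\sP_M$ and $S$ are orthogonal. Moreover the unique $S$-vertex of each path is one of that path's two endpoints, hence it starts or ends the path. Thus $\sP_M$ is an $S_{BE}$-path partition of $D$, as required.

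I do not expect a genuine obstacle: essentially all the work is done by Lemma~\ref{arc-in-n-cobre-S} together with the counting squeeze that forces $M$ to be perfect, and the hypothesis on proper induced subdigraphs is invoked only through that lemma. The one point deserving a line of care is the parity of $n$: when $n$ is odd, $\vert S\vert\geq n/2$ forces $\vert S\vert>\vert V(D)\setminus S\vert$, so no matching covering $S$ can exist and the first case always applies — but this situation is already subsumed by the squeeze argument above, so no separate treatment is needed.
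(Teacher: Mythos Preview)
Your proof is correct and is essentially the paper's argument unwound: the paper proves this theorem in one line by applying Lemma~\ref{stable_set_S_menor_vizinhanca} with $Z=S$, and the proof of that lemma, specialized to $Z=S$, is precisely your two-case split (Lemma~\ref{arc-in-n-cobre-S} if no matching covers $S$; otherwise the pigeonhole squeeze forces a perfect matching between $S$ and $V(D)\setminus S$, and the induction on $D'=D-V(M)$ becomes vacuous since $D'$ is empty). So the only difference is packaging, not content.
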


\begin{proof}
Let $S$ a maximum stable set of $D$. Let $\overline{S} = V(D)-S$. By hypothesis, it follows that $\vert S\vert  \geq \vert \overline {S}\vert $, and hence, the result follows by Lemma~\ref{stable_set_S_menor_vizinhanca}. 
\end{proof}

We omit the proof from the next theorem, since it is analogous to the proof of Theorem~\ref{arc-in-maior-S}, but we use Lemma~\ref{arc-in-n-cobre-S-berge} instead of Lemma~\ref{arc-in-n-cobre-S}.

\begin{theorem}
\label{arc-in-maior-S-be}
Let $D$ be a digraph such that every proper induced subdigraph of $D$ satisfies the $\alpha$-property. If $\alpha(D) \geq \frac {\vert V(D)\vert }{2}$, then $D$ satisfies the $\alpha$-property. 
\qed
\end{theorem}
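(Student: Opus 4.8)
The plan is to mirror the proof of Theorem~\ref{arc-in-maior-S} essentially verbatim, replacing every occurrence of the BE-property by the $\alpha$-property and every occurrence of an $S_{BE}$-path partition by an $S$-path partition. The only non-cosmetic ingredient used in that earlier argument was Lemma~\ref{stable_set_S_menor_vizinhanca}, which in turn relied on Lemma~\ref{arc-in-n-cobre-S}; for the $\alpha$-version we have the corresponding Lemma~\ref{arc-in-n-cobre-S-berge} available, so the substitution is legitimate. Thus the real content is to observe that an $\alpha$-analogue of Lemma~\ref{stable_set_S_menor_vizinhanca} holds: \emph{if $D$ has a stable set $Z$ with $\vert N(Z)\vert\le\vert Z\vert$ and every proper induced subdigraph of $D$ satisfies the $\alpha$-property, then $D$ satisfies the $\alpha$-property.}

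First I would take an arbitrary maximum stable set $S$ of $D$ and, exactly as before, show that there is a perfect matching $M$ between $Z$ and $Y=N(Z)$: maximality of $S$ gives $\vert Z-S\vert\le\vert Y\cap S\vert$, the hypothesis $\vert Z\vert\ge\vert Y\vert$ then forces $\vert Z\cap S\vert\ge\vert Y-S\vert$, Lemma~\ref{arc-in-n-cobre-S-berge} lets me assume a matching $M_1$ covering $Z\cap S$ into $Y-S$, the two inequalities collapse to equalities $\vert Z\cap S\vert=\vert Y-S\vert$ and $\vert Z-S\vert=\vert Y\cap S\vert$, and Lemma~\ref{arc-in-matching-N(S)} supplies a matching $M_2$ covering $Z-S$ into $Y\cap S$; then $M=M_1\cup M_2$ is perfect. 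Next I would let $\sP_M$ be the paths of length one corresponding to the edges of $M$ (these are orthogonal to $S$ since each such edge meets $S$ in exactly one endpoint), set $S'=S-V(M)$ and $D'=D-V(M)$, and check that $S'$ is a maximum stable set of $D'$ by the same contradiction: a larger stable set $S^*$ in $D'$ would, since $V(D')$ is disjoint from $Z\cup Y$, give $S^*\cup Z$ as a stable set larger than $S$ in $D$. Since $D'$ is a proper induced subdigraph it satisfies the $\alpha$-property, so it has an $S'$-path partition $\sP'$, and $\sP'\cup\sP_M$ is then an $S$-path partition of $D$. Finally, to obtain the theorem itself, I would let $S$ be a maximum stable set of $D$ and take $Z=\bS=V(D)-S$; since $\alpha(D)\ge\vert V(D)\vert/2$ we have $\vert S\vert\ge\vert\bS\vert$, hence $\vert N(\bS)\vert\le\vert\bS\vert$ (as $N(\bS)\subseteq S$), and the $\alpha$-analogue of Lemma~\ref{stable_set_S_menor_vizinhanca} finishes the argument.

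I do not anticipate a genuine obstacle here: every step is a line-by-line translation, and the only place where the BE-specific structure (the requirement that each path start or end in $S$) was invoked in the original proof was in applying Lemma~\ref{arc-in-n-cobre-S} and in the final application of the $D'$-induction hypothesis, both of which have exact $\alpha$-counterparts already proved in the excerpt. The one small point worth stating carefully is that the paths in $\sP_M$ are single edges, so the $S$-orthogonality condition (one vertex of $S$ per path) is automatic and no "begin or end" bookkeeping is needed — which is precisely why the $\alpha$-version is if anything easier than the BE-version. This is exactly the reason the authors felt entitled to write "we omit the proof"; a full write-up would simply be the proof of Theorem~\ref{arc-in-maior-S} with the substitutions indicated above.
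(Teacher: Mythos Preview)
Your overall strategy is exactly the paper's: prove an $\alpha$-analogue of Lemma~\ref{stable_set_S_menor_vizinhanca} by the same argument with Lemma~\ref{arc-in-n-cobre-S-berge} substituted for Lemma~\ref{arc-in-n-cobre-S}, and then apply it. Your derivation of that $\alpha$-analogue is correct line for line.

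There is, however, a genuine slip in your final paragraph. You set $Z=\bS=V(D)-S$ and then invoke the lemma. This fails on two counts. First, $\bS$ need not be a stable set, so the hypothesis of the lemma (that $Z$ is stable) is not met. Second, the inequality you assert does not follow: from $N(\bS)\subseteq S$ you get $\vert N(\bS)\vert\le\vert S\vert$, and combined with $\vert S\vert\ge\vert\bS\vert$ this gives $\vert N(\bS)\vert\le\vert S\vert$, not $\vert N(\bS)\vert\le\vert\bS\vert$ as you wrote.

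The fix is to choose $Z=S$ instead, exactly as in the paper's proof of Theorem~\ref{arc-in-maior-S}. Then $Z$ is stable, $N(S)\subseteq\bS$, and $\vert N(S)\vert\le\vert\bS\vert\le\vert S\vert=\vert Z\vert$, so the $\alpha$-analogue of Lemma~\ref{stable_set_S_menor_vizinhanca} applies and gives the conclusion. With this one-line correction your argument is complete and matches the paper's intended proof.
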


The next three lemmas are more specific structural results and are used in Sections~\ref{be-alis} and \ref{berge-alis} to verify Conjectures~\ref{conj_be} and \ref{conj_berge} for arc-locally (out) in-semicomplete digraphs, respectively.

\begin{lemma}
\label{arc-in-par-W-B-U}
Let $D$ be a digraph such that every proper induced subdigraph of $D$ satisfies the BE-property. If $V(D)$ contains disjoint nonempty subsets $U,X,Y$ such that $X$ and $Y$ are stable, $N(Y) \subseteq X$, $N(X) \subseteq U \cup Y$ and every vertex in $U$ is adjacent to every vertex in $X$, then $D$ satisfies the BE-property.
\end{lemma}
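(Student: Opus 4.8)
## Proof proposal

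The plan is to mimic the structure of the proof of Lemma~\ref{stable_set_S_menor_vizinhanca}: fix an arbitrary maximum stable set $S$ of $D$, build a suitable matching that lets us peel off a well-chosen set of vertices, apply the BE-property to the smaller digraph, and then reattach the peeled vertices as short paths. The key observation is that the hypotheses force $U,X,Y$ to behave like a ``funnel'': all edges leaving $Y$ go into $X$, all edges leaving $X$ go into $U\cup Y$, and $U$ is completely joined to $X$. In particular $X$ separates $Y$ from the rest of $D$, so $Y\cup X$ (or a large chunk of it) can be handled locally.

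First I would compare $\vert X\vert$ with $\vert U\vert + \vert Y\vert$. If $\vert N(X)\vert \le \vert X\vert$ — which certainly holds when $\vert U\vert+\vert Y\vert\le\vert X\vert$ since $N(X)\subseteq U\cup Y$ — then $X$ is a stable set with $\vert N(X)\vert\le\vert X\vert$ and we are done immediately by Lemma~\ref{stable_set_S_menor_vizinhanca}. So assume $\vert X\vert < \vert U\vert + \vert Y\vert$. Symmetrically, if $\vert N(Y)\vert\le\vert Y\vert$ we finish via Lemma~\ref{stable_set_S_menor_vizinhanca} applied to $Y$; so we may also assume $\vert Y\vert < \vert N(Y)\vert \le \vert X\vert$. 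Thus $\vert Y\vert < \vert X\vert < \vert U\vert + \vert Y\vert$, which forces $U\neq\emptyset$ and means $X$ is ``sandwiched'' strictly between $Y$ and $U\cup Y$ in size.

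Next I would use the matching lemmas to pair up $X$. Since $Y$ is stable and disjoint from $S$ only in part, I'd split into the parts in and out of $S$, exactly as in Lemma~\ref{stable_set_S_menor_vizinhanca}: using Lemma~\ref{arc-in-matching-N(S)} and Lemma~\ref{arc-in-n-cobre-S} (we may assume the relevant matchings exist, else $D$ has the desired path partition outright) to get matchings covering the pieces of $X$ and of $Y$, and combine the paths from these matchings with the complete join $U\to X$. Concretely: first match $Y$ into $X$ covering $Y$ (possible since $\vert N(Y)\vert\le\vert X\vert$ and $N(Y)\subseteq X$, using Lemma~\ref{arc-in-matching-N(S)} for the $S$-part and Lemma~\ref{arc-in-n-cobre-S} for the $\overline S$-part — or conclude directly). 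This uses up $\vert Y\vert$ vertices of $X$; the remaining $\vert X\vert-\vert Y\vert$ vertices of $X$, call this set $X_0$, satisfy $\vert X_0\vert < \vert U\vert$. Since every vertex of $U$ dominates every vertex of $X_0$, pick a matching from $U$ onto $X_0$ (trivial, as it is a complete bipartite graph and $\vert X_0\vert<\vert U\vert$), forming length-one paths $u\to x$. Now every vertex of $X$ lies on a path that also contains a vertex of $Y$ or of $U$, and these paths, together with the still-unmatched vertices of $U$ treated as singletons, form a path system $\sP_0$ covering $X\cup Y\cup$(some of $U$), orthogonal to $S$ because $X$ and $Y$ are stable and each path meets $S$ in at most one vertex — here I need to be careful that each path really contains exactly one vertex of $S$, which is where the maximality of $S$ and the sandwich inequalities have to be invoked.

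The main obstacle, as in the model proof, is the bookkeeping to guarantee that after deleting $V(\sP_0)$ the residual stable set $S'=S-V(\sP_0)$ is still \emph{maximum} in $D'=D-V(\sP_0)$, and that every path of $\sP_0$ has exactly one endpoint (start or end) in $S$ so that $\sP_0$ is genuinely an ``$S_{BE}$-compatible'' piece. For the first point I would argue by contradiction: a larger stable set $S^*$ of $D'$ together with the deleted stable pieces $X$, $Y$ (which are non-adjacent to $V(D')$ by the funnel hypothesis $N(Y)\subseteq X$, $N(X)\subseteq U\cup Y$) would contradict maximality of $S$ — this requires checking that $V(D')$ is non-adjacent to the deleted $Y$ and to the deleted part of $X$, which follows from $N(Y)\subseteq X$ and $N(X)\subseteq U\cup Y$. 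For the BE-endpoint condition, note each short path is either a singleton in $U$ (its unique vertex starts and ends it), an edge $u\to x$ with the $S$-vertex at an end, or a $Y$-into-$X$ edge oriented so the $S$-vertex is an endpoint; the stable sets $X$ and $Y$ guarantee the $S$-vertex on each path is one of the two endpoints. Finally, apply the BE-property to $D'$ to get an $S'_{BE}$-path partition $\sP'$, and output $\sP'\cup\sP_0$, which is the desired $S_{BE}$-path partition of $D$. I expect the delicate case analysis to be when some vertices of $U$ lie in $S$ and when $X\cap S$ or $Y\cap S$ is nonempty; handling these uniformly will require the same ``$\vert Z\cap S\vert=\vert N(Z)-S\vert$'' type of equality juggling seen in Lemma~\ref{stable_set_S_menor_vizinhanca}.
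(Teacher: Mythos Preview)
Your plan has a genuine gap at the step where you match the leftover set $X_0=X\setminus V(M)$ into $U$. A length-one path $ux$ with $u\in U$ and $x\in X_0$ is orthogonal to $S$ only if \emph{exactly one} of $u,x$ lies in $S$; in particular, whenever $x\in X_0\setminus S$ you would need $u\in U\cap S$. But nothing in the hypotheses controls $\vert U\cap S\vert$: $U$ need not be stable (indeed, in the applications $U$ is often semicomplete or contained in $V(Q)$), so $\vert U\cap S\vert$ can be as small as $1$ while $\vert X_0\setminus S\vert$ can be arbitrarily large. Concretely, take $\vert Y\vert=1$, $Y\subseteq S$, $\vert X\vert=3$, $X\cap S=\emptyset$, and $U$ a clique of size $5$ completely joined to $X$: then $\vert X_0\setminus S\vert=2$ but $\vert U\cap S\vert\le 1$, so no orthogonal matching from $X_0$ into $U$ exists. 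The ``equality juggling'' you allude to from Lemma~\ref{stable_set_S_menor_vizinhanca} does not rescue this, because in that lemma the hypothesis $\vert Z\vert\ge\vert N(Z)\vert$ is precisely what forces the needed equalities, and you have explicitly reduced to the case $\vert Y\vert<\vert X\vert<\vert U\vert+\vert Y\vert$ where no such inequality is available. (A minor side issue: adjacency of $U$ to $X$ is all that is assumed, not $U\to X$, though for length-one paths this does not matter for the BE-endpoint condition.)

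The paper's proof avoids this entirely by \emph{not} trying to cover all of $X$. It only builds a matching $M$ between $Y$ and $X$ covering $Y$ (using $M_1$ between $Y\setminus S$ and $X\cap S$, and $M_2$ between $Y\cap S$ and $X\setminus S$, exactly via Lemmas~\ref{arc-in-matching-N(S)} and~\ref{arc-in-n-cobre-S}), and then deletes only $V(M)$. The unmatched vertices of $X$ stay in $D'$ and are handled by the recursive $S'_{BE}$-path partition. The role of the complete join between $U$ and $X$ is not to build paths but to prove that $S'=S\setminus V(M)$ is still maximum in $D'$: for any stable $Z$ in $D'$, either $Z$ meets $U$ (forcing $Z\cap X=\emptyset$, whence $Z\cup Y$ beats $S$) or $Z$ avoids $U$ (whence $Z\cup(V(M)\cap X)$ beats $S$). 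So the fix is to peel off less, not more.
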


\begin{proof}
Let $S$ be a maximum stable set of $D$. To show that $D$ satisfies the BE-property, it suffices to show that $D$ has an $S_{BE}$-path partition. Note that $N(Y \cap S) \subseteq X-S$ and $N(X \cap S) \cap Y \subseteq Y-S$. It follows by Lemma~\ref{arc-in-matching-N(S)} that there exists a matching $M_1$ between $Y-S$ and $X \cap S$ covering $Y-S$. By Lemma~\ref{arc-in-n-cobre-S}, we may assume that there exists a matching $M_2$ between $Y \cap S$ and $X-S$ covering $Y \cap S$. Let $M=M_1 \cup M_2$ be a matching, and note that $M$ covers $Y$. Let $D'= D - V(M)$ and let $S'= S - V(M)$. Let $k = \vert S \cap V(M)\vert =\vert Y\vert = \vert V(M) \cap X\vert $. Assume that $S'$ is not a maximum stable in $D'$. Let $Z$ be a maximum stable set of $D'$. Thus, $\vert Z\vert  > \vert S'\vert  = \vert S\vert -k$. If $U \cap Z \neq \emptyset$, then since every vertex in $U$ is adjacent to every vertex in $X$, it follows that $X \cap Z= \emptyset$. Since $N(Y) \cap U = \emptyset$ and $Y$ is stable, it follows that the set $Z \cup Y$ is stable and larger than $S$ in $D$, a contradiction. So we may assume that $U \cap Z = \emptyset$. Since $Y \cap Z=\emptyset$, $N(X)\subseteq U \cup Y$ and $X$ is stable, it follows that $Z \cup (V(M) \cap X)$ is a stable set larger than $S $ in $D$, a contradiction. Therefore, $S'$ is a maximum stable in $D'$. By hypothesis, $D'$ is BE-diperfect. Let $\sP'$ be an $S'_{BE}$-path partition of $D'$. Let $\sP_M$ be the set of paths in $D$ corresponding to the edges of $M$. Note that $\sP_M$ and $S$ are orthogonal. Thus the collection $\sP' \cup \sP_M$ is an $S_{BE}$-path partition of $D$.
\end{proof}

\begin{lemma}
\label{arc-in-H-W-U}
Let $D$ be a digraph such that every proper induced subdigraph of $D$ satisfies the BE-property. Let $S$ be a maximum stable set of $D$. If $D$ contains a connected induced bipartite subdigraph $H:=H[X,Y]$ such that $Y \subseteq S$, $N(X) \cap S = Y$, $N(X) \cap N(Y)=\emptyset$ and every vertex in $N(Y)-X$ is adjacent to every vertex in $N(X)$, then $D$ admits an $S_{BE}$-path partition.  
\end{lemma}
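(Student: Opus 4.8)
The strategy mirrors the pattern established by Lemmas~\ref{arc-in-n-cobre-S}, \ref{stable_set_S_menor_vizinhanca} and \ref{arc-in-par-W-B-U}: build a suitable matching saturating part of the structure, peel off the matched vertices, apply the BE-property inductively on the smaller digraph, and glue the two path partitions. First I would record the basic bipartite bookkeeping inside $H=H[X,Y]$. Since $Y\subseteq S$ and $N(X)\cap S=Y$, every vertex of $X$ lies outside $S$, so $X$ and $S$ are disjoint; and $X$ need not be stable but the relevant neighbourhoods split cleanly: $N(Y-X)\cap S$ hits $S$ only inside $Y$, while vertices of $X\cap\bar S$ can be dominated into/out of $Y$ only. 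I would apply Lemma~\ref{arc-in-matching-N(S)} (with the stable set $X'\subseteq X$ — wait, $X$ may fail to be stable, so one instead works with the stable set $Y$ and the set $X$) to obtain a matching $M$ between $Y$ and $X$ covering $Y$; alternatively, if no matching between $Y$ and $N(Y)$ covering $Y$ exists, invoke Lemma~\ref{arc-in-n-cobre-S} (applied to a proper subdigraph, since the hypothesis of $H$ being a proper bipartite subdigraph forces $N(X)$ or $N(Y)-X$ nonempty in a way that keeps us proper) to finish directly. So the generic case is: there is a matching $M$ covering $Y$ with the paths $\sP_M$ of length one, orthogonal to $S$ because each edge of $M$ meets $S$ exactly in its $Y$-endpoint.

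Next I would set $D'=D-V(M)$ and $S'=S-V(M)$, with $k=|Y|=|V(M)\cap S|$, so $|S'|=|S|-k$. The crux is to verify that $S'$ remains a maximum stable set of $D'$; this is where the extra hypotheses $N(X)\cap N(Y)=\emptyset$ and ``every vertex of $N(Y)-X$ is adjacent to every vertex of $N(X)$'' earn their keep, exactly as the analogous conditions did in Lemma~\ref{arc-in-par-W-B-U}. Suppose $Z$ is a stable set of $D'$ with $|Z|>|S|-k$. I would argue by cases on how $Z$ meets $N(X)$ and $N(Y)-X$: if $Z$ meets $N(Y)-X$, then by the complete adjacency $Z$ avoids $N(X)$, and since $Z$ also avoids $V(M)\supseteq Y$ and $N(X)\cap S=Y$, the set $Z\cup(V(M)\cap X)$ — or rather $Z$ together with the part of $Y$ whose only outside-$Z$ neighbours have been removed — is stable and larger than $S$, a contradiction; symmetrically if $Z$ meets $N(X)$, the disjointness $N(X)\cap N(Y)=\emptyset$ together with $Y\subseteq S$ lets us swap $Z$ for a stable superset of $S$. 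In the remaining case $Z$ avoids both $N(X)$ and $N(Y)-X$, and then (using that $H$ is connected so $Y=N_H(X)$ and these are the only attachments of $Y$ inside what was removed) $Z\cup Y$ is stable of size $>|S|$, again a contradiction. Hence $S'$ is maximum in $D'$.

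Finally, $D'$ is a proper induced subdigraph of $D$ (it is proper because $V(M)\ne\emptyset$, as $Y\ne\emptyset$ by connectedness of $H$ with $Y\subseteq S$ maximum), so by hypothesis $D'$ satisfies the BE-property; take an $S'_{BE}$-path partition $\sP'$ of $D'$. Then $\sP'\cup\sP_M$ is a path partition of $D$; it is orthogonal to $S$ because $\sP'$ is orthogonal to $S'$ and each path of $\sP_M$ meets $S$ exactly once (in $Y$), and the BE-condition holds because every vertex of $S'$ begins or ends its path in $\sP'$ while every vertex of $S\cap V(M)=Y$ is an endpoint of a length-one path in $\sP_M$. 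Thus $D$ admits an $S_{BE}$-path partition, as required. The main obstacle I anticipate is the case analysis showing $S'$ is maximum: one must use all three structural hypotheses on $H$ simultaneously and be careful that the exchange argument produces a genuinely larger stable set in $D$ (not just in $D'$), handling correctly the interaction between $N(X)$, $N(Y)-X$, and the removed set $V(M)$.
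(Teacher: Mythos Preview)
Your overall strategy matches the paper's: build a matching inside $H$, remove $V(M)$, verify $S'$ is maximum in $D'$, and glue an $S'_{BE}$-path partition of $D'$ to the length-one paths $\sP_M$. But you get the direction of the matching wrong, and this stems from a misreading of the hypothesis: you write that ``$X$ need not be stable'', whereas $H=H[X,Y]$ is an \emph{induced} bipartite subdigraph, so $X$ is stable in $D$. Because $X$ is stable, $X\cap S=\emptyset$, and $N(X)\cap S=Y$, Lemma~\ref{arc-in-matching-N(S)} applies directly to the stable set $X$ and yields a matching $M$ between $X$ and $Y$ covering $X$, not $Y$. This is exactly what the paper does.

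Covering $X$ rather than $Y$ is not a cosmetic choice; it is what makes the case analysis go through. With $M$ covering $X$ one has $V(D')\cap X=\emptyset$, so for any stable $Z\subseteq V(D')$ with $|Z|>|S|-|X|$ the dichotomy is clean: if $Z\cap(N(Y)-X)=\emptyset$ then $Z$ misses both $X$ and $N(Y)-X$, hence misses $N(Y)$ entirely, and $Z\cup(V(M)\cap Y)$ contradicts maximality of $S$; if $Z\cap(N(Y)-X)\neq\emptyset$ then the complete-adjacency hypothesis forces $Z\cap N(X)=\emptyset$, so $Z\cup X$ is stable and again too large. By contrast, if $M$ only covers $Y$, vertices of $X\setminus V(M)$ may survive in $D'$ and sit inside $Z$; then in your case ``$Z$ avoids both $N(X)$ and $N(Y)-X$'' the set $Z\cup Y$ need not be stable (a surviving $z\in Z\cap X$ can be adjacent to $Y$), and your sketch does not repair this. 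Your hedged phrase ``$Z\cup(V(M)\cap X)$ --- or rather $Z$ together with the part of $Y$\ldots'' is precisely where the argument breaks. Fix: recognise $X$ is stable, apply Lemma~\ref{arc-in-matching-N(S)} to $X$, and cover $X$; the rest of your write-up then carries through.
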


\begin{proof}

Note that $X \cap S = \emptyset$ because $H$ is connected and $Y \subseteq S$. Since $S$ is a maximum stable set and $N(X) \cap S = Y$, it follows by Lemma~\ref{arc-in-matching-N(S)} that there exists a matching $M$ between $X$ and $Y$ covering $X$. Let $D'=D-V(M)$ and let $S'=S-V(M)$. Note that $V(D') \cap X=\emptyset$. Towards a contradiction, assume that $S'$ is not a maximum stable set in $D'$ and let $Z$ be a maximum stable set in $D'$. Note that $\vert Z\vert  > \vert S\vert -\vert V(M)\cap Y\vert $ and $\vert V(M)\cap Y\vert  = \vert X\vert $. If $Z \cap (N(Y)-X) = \emptyset$, then $V(D') \cap X = \emptyset$ and we conclude that $Z \cup (V(M) \cap Y)$ is a stable set in $D$ larger than $S$, a contradiction. So we may assume that $Z \cap (N(Y)-X)  \neq \emptyset$. Since every vertex in $N(Y)-X$ is adjacent to every vertex in $N(X)$ and $N(X) \cap N(Y)=\emptyset$, it follows that $N(X) \cap Z = \emptyset$. Thus $Z \cup X$ is a stable set in $D$ larger than $S$ in $D$, a contradiction. Therefore, $S'$ is a maximum stable set in $D'$. Let $\sP_M$ be the collection of paths corresponding to the edges of $M$. By hypothesis, $D'$ is BE-diperfect. Let $\sP'$ be an $S'_{BE}$-path partition of $D'$. Thus the collection $\sP' \cup \sP_M$ is an $S_{BE}$-path partition of $D$.
\end{proof}

\begin{lemma}
\label{bip-x-y-font}
Let $D$ be a digraph such that every proper induced subdigraph of $D$ satisfies the BE-property. Let $S$ be a maximum stable set of $D$. Let $H:=H[X,Y]$ be an induced bipartite subdigraph of $D$ such that $N^-(X)=Y$, $Y \Rightarrow X$, $Y \cap S=\emptyset$ and $N^+(X \cap S)=\emptyset$. If there exists no matching between $X$ and $Y$ covering $X$, then $D$ has an $S_{BE}$-path partition.	
\end{lemma}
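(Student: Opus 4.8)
The plan is to follow the template of Lemma~\ref{arc-in-n-cobre-S}: strip off a well-chosen subset of $Y$, invoke the inductive hypothesis on the resulting proper subdigraph, and then reattach a matching. Since there is no matching between $X$ and $Y$ covering $X$, I would apply Lemma~\ref{arc-in-hall} to the underlying graph $U(H)$ with bipartition $(X,Y)$ to obtain a nonempty $X'\subseteq X$ together with a matching $M$ of $H$ covering $Y':=N_H(X')$. Because $Y\Rightarrow X$, every edge of $M$ is oriented from $Y'$ to $X'$; for $y\in Y'$ let $x_y\in X'$ denote its $M$-partner, so that $y\to x_y$.

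The key observation is that deleting $Y'$ turns the $M$-saturated part of $X'$ into sources. Indeed, for $x\in X'$ every in-neighbour of $x$ lies outside $X$ (there are no edges inside $X$, as $H$ is induced and bipartite), hence in $N^-(X)=Y$, and therefore in $N_H(x)\subseteq Y'$. So in $D':=D-Y'$ each $M$-saturated vertex of $X'$ is a source. Moreover $Y'\subseteq Y$ is disjoint from $S$, so $S\subseteq V(D')$ is still stable in $D'$, and since every stable set of $D'$ is a stable set of $D$, $S$ is a maximum stable set of $D'$. As $Y'\neq\emptyset$, the digraph $D'$ is a proper induced subdigraph of $D$, so by hypothesis it admits an $S_{BE}$-path partition $\sP'$.

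It remains to reassemble an $S_{BE}$-path partition of $D$. Each $M$-saturated $x\in X'$, being a source of $D'$, is the first vertex of its path $P_x\in\sP'$; if in addition $x\in S$, then $x$ is a sink (by $N^+(X\cap S)=\emptyset$), which forces $P_x$ to be the single-vertex path $(x)$. Now for every $y\in Y'$ prepend $y$ to $P_{x_y}$ using the edge $y\to x_y$, and leave every other path of $\sP'$ unchanged; call the resulting collection $\sP$ (distinct $y$ have distinct partners $x_y$, hence distinct host paths, so this is unambiguous). Then $\sP$ is a path partition of $D$, orthogonal to $S$ because the only vertices added back are those of $Y'$, none of which lie in $S$. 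It also satisfies the BE-condition: an unmodified path inherits it from $\sP'$, while on a modified path $yP_{x_y}$ the unique vertex of $S$ is the one it had on $P_{x_y}$, and this vertex ends $yP_{x_y}$ — either it was already the end of $P_{x_y}$ (consistent since the start $x_y$ need not belong to $S$), or $x_y\in S$, in which case $P_{x_y}=(x_y)$ and $x_y$ is the end of the path $y\,x_y$. Hence $\sP$ is an $S_{BE}$-path partition of $D$.

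The one genuinely delicate step is the last: prepending a vertex to a path preserves the BE-property only when the $S$-vertex of that path is not pinned at its start, and this is exactly what the hypothesis $N^+(X\cap S)=\emptyset$ buys us, by collapsing any such path to a lone vertex. A minor loose end, as in the companion lemmas of this section, is the degenerate case in which some vertex of $X$ is isolated in $U(H)$ (so that $Y'$ could be empty); such vertices form trivial one-vertex paths and are handled directly.
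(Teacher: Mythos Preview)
Your proof is correct and follows essentially the same route as the paper's: apply Lemma~\ref{arc-in-hall} to obtain $X'\subseteq X$ with a matching $M$ covering $Y'=N_H(X')$, delete $Y'$, invoke the hypothesis on $D'=D-Y'$, and prepend each $y\in Y'$ to the path of its $M$-partner. Your verification of the BE-condition on the modified paths is in fact more explicit than the paper's (which dispatches it with ``it is easy to see''), and the degenerate case $Y'=\emptyset$ that you flag as a loose end is likewise not addressed in the paper's proof.
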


\begin{proof}
By Lemma~\ref{arc-in-hall}, there exists a non-empty subset $X' \subseteq X$ such that $D[X'\cup N^-(X')]$ has a matching $M$ covering $N^-(X')$. Let $Y'=N^-(X')$ and let $D'=D-Y'$.  Since $Y \cap S = \emptyset$, $S$ is a maximum stable set in $D'$. By hypothesis, $D'$ is BE-diperfect. Let $\sP'$ be an $S_{BE}$-path partition of $D$. Since $N^-(X')=Y'$ and $N^+(X' \cap S)=\emptyset$, it follows that every vertex $v$ in $X'$ starts a path in $\sP'$ (if $v \notin S$) or $v$ is itself a path in $\sP'$ (if $v \in S$). Since $Y' \Rightarrow X'$, it is easy to see that using the edges in $M$, we can add the vertices of $Y'$ to paths in $\sP'$ that starts at some vertex in $V(M)\cap X'$, obtaining an $S_{BE}$-path partition of $D$.   
\end{proof}

\section{Arc-locally (out) in-semicomplete digraphs}
\label{arc-semi}

In this section, we extend the results in \cite{freitas2021} and we provide more on structure of an arc-locally in-semicomplete digraph. Let $D$ be a digraph. We say that $D$ is \emph{arc-locally in-semicomplete} (resp., \emph{arc-locally out-semicomplete}) if for each edge $uv \in E(D)$, every in-neighbor (resp., out-neighbor) of $u$ and every in-neighbor (resp., out-neighbor) of $v$ are adjacent or are the same vertex. Note that the inverse of an arc-locally in-semicomplete digraph is an arc-locally out-semicomplete digraph.

Arc-locally (out) in-semicomplete digraphs were introduced by Bang-Jensen~\cite{alis} as a common generalization of semicomplete and semicomplete bipartite digraphs. Since then, these classes have been extensively studied in the literature~\cite{wang2009structure,wang2011,wang2019critical, galeana2009,galeana2012,freitas2021}.

Let us start with a class of digraphs which are closely related to arc-locally in-semicomplete digraphs. Let $Q$ be a cycle of length $k \geq 2$ and let $X_1 , X_2 , \ldots , X_k$ be disjoint stable sets. The \emph{extended cycle} $Q:= Q[ X_1 , X_2 , \ldots , X_k]$ is the digraph with vertex set $X_1 \cup X_2 \cup \cdots \cup X_k$ and edge set $\{x_ix_{i+1} : x_i \in X_i, x_{i+1} \in X_{i + 1}, i=1,2,\ldots,k \}$, where subscripts are taken modulo $k$. So $X_1 \mapsto X_2 \mapsto \cdots \mapsto X_k \mapsto X_1$. An extended cycle is \emph{odd} if $k$ is odd (see Figure~\ref{circ-estentido}). 

\begin{figure}[ht]
\centering
    \tikzset{middlearrow/.style={
	decoration={markings,
		mark= at position 0.6 with {\arrow{#1}},
	},
	postaction={decorate}
}}

\tikzset{shortdigon/.style={
	decoration={markings,
		mark= at position 0.45 with {\arrow[inner sep=10pt]{<}},
		mark= at position 0.75 with {\arrow[inner sep=10pt]{>}},
	},
	postaction={decorate}
}}

\tikzset{digon/.style={
	decoration={markings,
		mark= at position 0.4 with {\arrow[inner sep=10pt]{<}},
		mark= at position 0.6 with {\arrow[inner sep=10pt]{>}},
	},
	postaction={decorate}
}}

\def \n {5}
\def \radius {3cm}
\def \margin {8}

\begin{tikzpicture}[scale = 0.66]

    \draw (4,5) circle (30pt);
	\node (v1_1) [black vertex] at (3.8,4.8)  {};
	\node (v1_2) [black vertex] at (4.2,5.3)  {};

	\draw (9,5) circle (30pt); 
	\node (v2_1) [black vertex] at (9,5)  {};

	\draw (9,8) circle (30pt);
	\node (v3_1) [black vertex] at (9.5,8)  {};
	\node (v3_2) [black vertex] at (8.5,8)  {};
	\node (v3_3) [black vertex] at (9,8)  {};

	\draw (6.5,10) circle (30pt);
	\node (v4_1) [black vertex] at (6.5,10.5) {};
	\node (v4_2) [black vertex] at (6.5,9.5) {};

    \draw (4,8) circle (30pt);	
	\node (v5_1) [black vertex] at (4,8)  {};

	\node (label_n4)  at (6.5,11.5) {$X_4$};
	\node (label_n2)  at (10.,4.0)  {$X_2$};
	\node (label_n3)  at (10.0,9.0)  {$X_3$};
	\node (label_n5)  at (3.0,9.0)  {$X_5$};
	\node (label_n1)  at (3.0,4.0)  {$X_1$};
	

  \foreach \from/\to in {v1_1/v2_1,v1_2/v2_1,v2_1/v3_1,v2_1/v3_2,v2_1/v3_3,v3_1/v4_1,v3_2/v4_1,v3_3/v4_1,v3_1/v4_2,v3_2/v4_2,v3_3/v4_2,v4_1/v5_1,v4_2/v5_1,v5_1/v1_1,v5_1/v1_2}
    \draw[edge,middlearrow={>}] (\from) -- (\to);    
\end{tikzpicture}
\caption{\centering Example of an odd extended cycle.}
\label{circ-estentido}
\end{figure}

In~\cite{wang2009structure}, Wang and Wang characterized strong arc-locally in-semicomplete digraphs. Recently, Freitas and Lee~\cite{freitas2021} characterized the structure of arbitrary connected arc-locally in-semicomplete digraphs. 

\begin{theorem}[Freitas and Lee, 2021]
\label{lem-arc-resultado}
Let $D$ be a connected arc-locally in-semicomplete digraph. Then,
\begin{enumerate}
	\item[\rm{(i)}] $D$ is a diperfect digraph, or

	\item[\rm{(ii)}] $V(D)$ can be partitioned into $(V_1, V_2, V_3)$ such that $D[V_1]$ is a semicomplete digraph, $V_1 \mapsto V_2$, $V_1 \Rightarrow V_3$, $D[V_2]$ is an odd extended cycle of length at least five, $V_2 \Rightarrow V_3$, $D[V_3]$ is a bipartite digraph and $V_1$ or $V_3$ (or both) can be empty, or 
	
	\item[\rm{(iii)}] $D$ has a clique cut. \newline
\end{enumerate} 
\end{theorem}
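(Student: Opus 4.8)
The statement is a trichotomy, so no induction is needed. If $D$ has a clique cut we are in case (iii); assume henceforth that $D$ has no clique cut. If $U(D)$ is perfect we are in case (i); otherwise, by the Strong Perfect Graph Theorem (Theorem~\ref{perf}), $U(D)$ contains an induced odd hole (a cycle of length at least five) or an induced odd antihole. The whole task is to deduce, from the arc-locally in-semicomplete hypothesis together with the absence of a clique cut, the extended-cycle structure of case (ii).

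The key rigidity fact I would establish first is: \emph{if $D[W]$ is an induced subdigraph whose underlying graph is a cycle $c_1c_2\cdots c_nc_1$ with $n\geq 5$, then $D[W]$ is a directed cycle.} Indeed, orient each edge of $U(D[W])$ and consider an edge $\{c_i,c_{i+1}\}$: whichever way it is oriented, if simultaneously $c_{i-1}\to c_i$ and $c_{i+2}\to c_{i+1}$, then $c_{i-1}\in N^-(c_i)$ and $c_{i+2}\in N^-(c_{i+1})$ sit on an edge, so the arc-locally in-semicomplete condition forces $c_{i-1}$ and $c_{i+2}$ to be adjacent or equal — impossible in an induced cycle of length at least five. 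Hence for every $i$, ``$c_{i-1}\to c_i$'' forbids ``$c_{i+2}\to c_{i+1}$'', i.e. ``$c_{i-1}\to c_i$'' implies ``$c_{i+1}\to c_{i+2}$''; since $n$ is odd, the set of ``forward'' edges is closed under shifting by two around the whole cycle, so it is either empty or everything — a directed cycle. A companion finite case analysis then shows that an induced odd antihole of length at least seven cannot occur in $D$: each of the many triangles such an antihole contains, fed to the arc-locally in-semicomplete condition, forces an adjacency incompatible with the antihole being induced unless it exposes a clique cut. Therefore $U(D)$ contains an induced odd hole, whose preimage in $D$ is, by the rigidity fact, a directed odd cycle of length $2k+1\geq 5$.

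I would then enlarge this cycle to a \emph{maximal} odd extended cycle $Q=Q[X_1,\dots,X_{2k+1}]$ by repeatedly absorbing a vertex $v\notin V(Q)$ into a part $X_i$ whenever $D[V(Q)\cup\{v\}]$ is again an extended cycle (the number of parts, hence the parity, is unchanged). Set $V_2=V(Q)$, and classify each $v\in V(D)\setminus V_2$ by how it attaches to $Q$. The workhorse is to feed the arc-locally in-semicomplete condition the edges \emph{inside} $Q$: since $X_{i-1}\mapsto X_i$, any in-neighbour of a vertex of $X_{i-1}$ and any in-neighbour of a vertex of $X_i$ are adjacent or equal; taking $v$ itself (when it dominates some vertex of $Q$) and the cycle-vertices as such in-neighbours both restricts how $v$ may see $Q$ and forces adjacencies among attaching vertices. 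Carrying this through, the non-cycle vertices split as $V_1\cup V_3$, where $V_1$ is the set of vertices dominating all of $V_2$ with no arc back to $V_2$, and $V_3$ is the remainder; then $V_1\mapsto V_2$, $V_2\Rightarrow V_3$ and $V_1\Rightarrow V_3$, the digraph $D[V_1]$ is semicomplete (given $u,u'\in V_1$, pick $q'\in X_{i-1}$ and $q\in X_i$; since $u\to q'$, $u'\to q$ and $q'\to q$, applying the condition to the edge $q'q$ with $u\in N^-(q')$ and $u'\in N^-(q)$ gives that $u,u'$ are adjacent), and $D[V_3]$ is bipartite (each vertex of $V_3$ is shown to be dominated only from a single part $X_i$ of $Q$, which together with the absence of $V_3\to V_2$ arcs and the arc-locally in-semicomplete condition rules out odd cycles in $D[V_3]$). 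Since $V_1$ or $V_3$ may be empty, this is conclusion (ii).

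The step I expect to be the real obstacle is the exhaustive classification of how a non-cycle vertex attaches to $V_2$: one must rule out every ``mixed'' attachment — a vertex dominating some parts while dominated by others, or adjacent to two non-consecutive parts — by driving each configuration to a contradiction with the arc-locally in-semicomplete condition, the maximality of $Q$, or the absence of a clique cut. Controlling the propagation of forced adjacencies around the odd cycle — it is precisely the odd length, via the shift-by-two phenomenon, that makes the structure collapse instead of merely twist — and keeping the number of sub-cases finite is where the real effort lies; verifying the relations $V_1\mapsto V_2$, $V_2\Rightarrow V_3$, $V_1\Rightarrow V_3$ and the semicompleteness of $D[V_1]$ and bipartiteness of $D[V_3]$, once the split is available, is comparatively routine.
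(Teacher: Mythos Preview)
The paper does not prove this theorem: it is quoted verbatim from the earlier work of Freitas and Lee~\cite{freitas2021} and used as a black box throughout. There is therefore no proof in the present paper to compare your sketch against.

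As a standalone sketch, your outline is broadly in the right spirit --- start from an induced odd hole (via the Strong Perfect Graph Theorem), show it must be a directed cycle, blow it up to a maximal odd extended cycle, and then classify how the remaining vertices attach --- and this is indeed the shape the argument in~\cite{freitas2021} takes. Your rigidity argument for the hole is clean and correct. Two places in your sketch are genuinely thin, however. First, the dismissal of odd antiholes of length at least seven is asserted rather than argued; you would need either to rule them out directly from the arc-locally in-semicomplete condition or to show that the odd-hole case already suffices (note that an odd antihole on $2k+1\ge 7$ vertices contains many triangles, and Lemma~\ref{lema_arc-livre-triangulo} in the present paper hints that triangles in the non-$V_1$ part are severely constrained). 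Second, your justification that $D[V_3]$ is bipartite (``each vertex of $V_3$ is dominated only from a single part $X_i$'') only covers vertices in $N_1$; vertices of $V_3$ at distance two or more from $Q$ are not dominated by any $X_i$, so the bipartiteness of the full $D[V_3]$ needs a separate propagation argument along the distance layers $N_d$. You yourself flag the attachment classification as the real obstacle, and that is accurate: the bulk of the work in~\cite{freitas2021} is precisely that exhaustive case analysis, which your sketch does not attempt to carry out.
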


The next lemma states that if $V(D)$ admits a partition as described in Theorem~\ref{lem-arc-resultado}\rm{(ii)} and $V_1 = \emptyset$, then $D$ does not contain cycle of length three.

\begin{lemma}
\label{lema_arc-livre-triangulo}
Let $D$ be an arc-locally in-semicomplete digraph. Let $(V_1,V_2,V_3)$ be a partition of $V(D)$ as described in Theorem~\ref{lem-arc-resultado}\rm{(ii)}. Then, the graph $U(D[V_2 \cup V_3])$ does not contain a cycle of length three.
\end{lemma}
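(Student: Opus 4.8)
The plan is to argue by contradiction: suppose $U(D[V_2 \cup V_3])$ contains a triangle $T$ on vertices $\{a,b,c\}$, and derive a contradiction with the arc-locally in-semicomplete property, using the structural facts about the partition $(V_1,V_2,V_3)$ — namely that $D[V_2]$ is an odd extended cycle $Q[X_1,\dots,X_k]$ with $k\geq 5$, that $D[V_3]$ is bipartite (hence triangle-free in its underlying graph), that $V_2 \Rightarrow V_3$ (no edges from $V_3$ back to $V_2$), and that consecutive classes satisfy $X_i \mapsto X_{i+1}$. First I would dispose of the easy cases: the triangle cannot lie entirely in $V_3$ since $U(D[V_3])$ is bipartite; and the triangle cannot lie entirely in $V_2$, because in an extended cycle $Q[X_1,\dots,X_k]$ with $k\geq 5$ any three mutually adjacent vertices would force three classes that are pairwise consecutive around a cycle of length $\geq 5$, which is impossible (the only way to get a triangle in the underlying graph of an extended cycle is $k=3$). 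So $T$ must meet both $V_2$ and $V_3$.

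Next I would handle the mixed cases. Since $V_2 \Rightarrow V_3$, every edge of $U(D)$ between $V_2$ and $V_3$ is oriented from $V_2$ to $V_3$. Suppose $T$ has exactly one vertex in $V_2$, say $a\in V_2$ and $b,c\in V_3$; then $bc\in E(U(D))$ and both $a\to b$ and $a\to c$ (by $V_2 \Rightarrow V_3$). Without loss of generality assume $b \to c$. Now apply the arc-locally in-semicomplete condition to the edge $bc$: since $a$ is an in-neighbor of $b$, and we need an in-neighbor of $c$ to compare with — here I would use the structure more carefully. The cleaner route: consider the edge $a\to b$. Let $a\in X_i\subseteq V_2$. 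Because $D[V_2]$ is an extended cycle, $a$ has an in-neighbor $a'\in X_{i-1}\subseteq V_2$. Now $a'$ is an in-neighbor of $a$, and $a$ is an in-neighbor of $b$ (using the edge $a\to b$); so by the arc-locally in-semicomplete property applied to $ab$, the vertices $a'$ and... no — I need in-neighbors of the tail and head of a common edge. Let me instead apply it to edge $a b$: in-neighbors of $a$ and in-neighbors of $b$ must be adjacent or equal. Since $V_1 \mapsto V_2$ or $a'\in X_{i-1}$ gives an in-neighbor of $a$ in $V_2$, and since $a\to c$ also, and if $c$ has an in-neighbor $c'$ distinct from $a'$... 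The key point to extract is that two vertices in non-consecutive classes of $V_2$, or a vertex of $V_2$ in a "far" class and a vertex of $V_3$, would be forced adjacent, contradicting either $V_2 \Rightarrow V_3$, the definition of the extended cycle (no edges between non-consecutive $X_i$), or the bipartiteness/triangle-freeness we already used.

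The remaining mixed case has exactly two vertices of $T$ in $V_2$, say $a,a'\in V_2$ and $b\in V_3$, with $a,a'$ adjacent in $U(D[V_2])$ — so $a\in X_i$, $a'\in X_{i+1}$ (consecutive), and $X_i \mapsto X_{i+1}$ gives $a\to a'$. Both $a,a'$ are adjacent to $b$ in $U(D)$, and $V_2 \Rightarrow V_3$ forces $a\to b$ and $a'\to b$. Now apply the arc-locally in-semicomplete property to the edge $a' b$ (tail $a'$, head $b$): $a$ is an in-neighbor of $a'$ (since $a\to a'$) and also $a$ is an in-neighbor of $b$ (since $a\to b$) — so this comparison is between $a$ and itself, giving nothing. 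Instead apply it to the edge $a a'$: an in-neighbor of $a$ and an in-neighbor of $a'$ must be adjacent or equal; taking an in-neighbor $w\in X_{i-1}$ of $a$ and the in-neighbor $a$ of $a'$, we get $w$ adjacent to $a$ — fine, that holds. The productive move is the edge $a' b$ combined with an in-neighbor of $a'$ other than $a$: since $D[V_2]$ is an extended cycle, $a'$ has an in-neighbor in $X_i$, which could be $a$ or another vertex $a''\in X_i$; if $X_i$ has another vertex $a''\neq a$ then $a''$ is an in-neighbor of $a'$ and $a$ is an in-neighbor of $b$... I expect the main obstacle to be exactly this bookkeeping: systematically choosing the right edge and the right pair of in-neighbors so that the forced adjacency contradicts one of (edges of $V_2$ only run between consecutive classes), ($V_2 \Rightarrow V_3$), or (no triangle in a bipartite digraph / no triangle in an extended $k$-cycle with $k\geq 5$). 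I would organize it as a short case analysis on $|V(T)\cap V_2|\in\{1,2\}$ and, within each, on which of the three vertices of $T$ plays the role of head in a chosen edge, pushing every branch to one of those three contradictions.
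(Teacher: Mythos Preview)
Your overall strategy is exactly the paper's: rule out $T\subseteq V_2$ and $T\subseteq V_3$, then do a case split on $|V(T)\cap V_2|\in\{1,2\}$ and force a forbidden adjacency via the arc-locally in-semicomplete property. The gap is that you never actually find the right edge/in-neighbour pair in either mixed case; you try $a'b$ and $aa'$ and correctly observe they give nothing, and then leave it as ``bookkeeping''. But that bookkeeping is the whole content of the proof.

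Here is what you are missing. In the case $|V(T)\cap V_2|=2$, with $a\in X_i$, $a'\in X_{i+1}$, $b\in V_3$, the orientations are forced: $a\to a'$, $a\to b$, $a'\to b$ (so $a$ is the source and $b$ the sink of a transitive triangle). Apply the arc-locally in-semicomplete property to the edge $ab$: pick an in-neighbour $w\in X_{i-1}$ of $a$ (such $w$ exists since the extended cycle has all classes non-empty) and the in-neighbour $a'$ of $b$. Then $w$ and $a'$ must be adjacent; but $w\in X_{i-1}$ and $a'\in X_{i+1}$ lie in non-consecutive classes of an extended cycle of length $k\ge 5$, a contradiction. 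The edges you tried ($a'b$ and $aa'$) fail precisely because every in-neighbour you can name there is already adjacent to its partner; the only productive edge is the one from source to sink.

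In the case $|V(T)\cap V_2|=1$, with $a\in X_i$, $b,c\in V_3$, $a\to b$, $a\to c$ and (say) $b\to c$, apply the property to the edge $ac$: the in-neighbour $w\in X_{i-1}$ of $a$ and the in-neighbour $b$ of $c$ must be adjacent, and $V_2\Rightarrow V_3$ forces $w\to b$. Now $\{w,a,b\}$ is a triangle with two vertices in $V_2$, reducing to the previous case. Your attempt via the edge $ab$ stalls because $b$ is not the sink of the triangle; once you orient the triangle and always work with the source-to-sink edge, both cases go through in one line each.
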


\begin{proof}

Let $Q:=Q[X_1,X_2,...,X_k]$ be the odd extended cycle of length at least five corresponding to $D[V_2]$. Since $U(D[V_3])$ is bipartite and $Q$ is an extended cycle of length at least five, it follows that both $U(D[V_3])$ and $U(Q)$ do not contain a cycle of length three. Assume that $U(D[V_2 \cup V_3])$ contains a cycle $T$ of length three. Note that $V(T) \cap V_2 \neq \emptyset$ and $V(T) \cap V_3 \neq \emptyset$. Since $V_2 \Rightarrow V_3$, it follows that $T$ is a transitive triangle in $D[V_2 \cup V_3]$. Let $V(T)=\{x_1,x_2,x_3\}$. Now we consider two cases, depending on the cardinality of $\vert V(T) \cap V_2\vert $. \newline

\textbf{Case 1.} $\vert V(T) \cap V_2\vert =2$. Let $x_1,x_2 \in V_2$ and let $x_3 \in V_3$. Without loss of generality, assume that $x_1x_2 \in E(D)$, $x_1 \in X_1$, and $x_2 \in X_2$. Let $x_k \in X_k$ such that $x_k \to x_1$. Since $x_2 \to x_3$, $x_1x_3 \in E(D)$ and $D$ is arc-locally in-semicomplete, it follows that $x_k$ and $x_2$ are adjacent, a contradiction to the fact that $D[V_2]$ is an odd extended cycle of length at least five. \newline

\textbf{Case 2.} $\vert V(T) \cap V_2\vert =1$. Let $x_1 \in V_2$ and let $x_2,x_3 \in V_3$. Without loss of generality, assume that $x_1 \in X_1$ and $x_2x_3 \in E(D)$. Let $x_k \in X_k$ such that $x_k \to x_1$. Since $x_2 \to x_3$, $x_1x_3 \in E(D)$, $V_2 \Rightarrow V_3$ and $D$ is arc-locally in-semicomplete, it follows that $x_k \to x_2$. Therefore, $D[\{x_k,x_1,x_2\}]$ is a transitive triangle and the result follows by Case 1.
\end{proof}

The next lemma is more specific structural result.

\begin{lemma}
\label{lem_arc_1}
Let $D$ be an arc-locally in-semicomplete digraph. Let $H:=H[X,Y]$ be an induced connected bipartite subdigraph of $D$ such that $\vert X\vert  \geq 1$, $\vert Y\vert  \geq 1$ and $X \Rightarrow Y$. Let $v$ be a vertex of $D-V(H)$ that dominates some vertex of $X$. If $v \Rightarrow X$, then $v \mapsto X$.
\end{lemma}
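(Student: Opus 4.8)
The plan is to exploit the hypothesis that $D$ is arc-locally in-semicomplete together with the structure $X \Rightarrow Y$ and the assumption $v \Rightarrow X$. We already know $v$ dominates some vertex of $X$ and that no vertex of $X$ dominates $v$; we must show $v$ dominates \emph{every} vertex of $X$ (that is, $v \to X$, which combined with $v \Rightarrow X$ gives $v \mapsto X$). First I would fix a vertex $x_0 \in X$ with $v \to x_0$, and take an arbitrary $x \in X$ with $x \neq x_0$; the goal is to produce an edge $vx \in E(D)$. Since $H$ is connected and bipartite with parts $X$ and $Y$, there is a path in $U(H)$ from $x_0$ to $x$, so it suffices to propagate the property "$v$ dominates this vertex of $X$" along such a path. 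The natural unit of propagation is: if $v \to x'$ for some $x' \in X$ and $x'' \in X$ has a common neighbor $y \in Y$ in $H$ (i.e.\ $x'y, x''y$ are edges of $U(H)$), then $v \to x''$.

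To establish that unit step, let $y \in Y$ be a common neighbor of $x'$ and $x''$ in $H$. Since $X \Rightarrow Y$, neither $y \to x'$ nor $y \to x''$ can hold, so in fact $x' \to y$ and $x'' \to y$. Now consider the edge $x' \to y$ together with the in-neighbor $v$ of $x'$ (using $v \to x'$): since $D$ is arc-locally in-semicomplete, every in-neighbor of $x'$ and every in-neighbor of $y$ are adjacent or equal. In particular $v$ and $x''$ are adjacent or equal (as $x''$ is an in-neighbor of $y$). They are distinct because $v \notin V(H)$. Hence $v$ and $x''$ are adjacent; since $v \Rightarrow X$ forbids $x'' \to v$, we conclude $v \to x''$, as desired.

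With the unit step in hand I would finish by induction along a shortest path in $U(H)$ from $x_0$ to an arbitrary $x \in X$: such a path has even length and alternates between $X$ and $Y$, so consecutive vertices of $X$ on it share a neighbor in $Y$, and repeated application of the unit step gives $v \to x$. Since $x \in X$ was arbitrary, $v \to X$; combined with the hypothesis $v \Rightarrow X$ this yields $v \mapsto X$. The only subtlety I anticipate is making sure the arc-local hypothesis is applied to the correct edge and in-neighbors — one must use the edge $x' \to y$ (which requires knowing $x' \to y$ rather than $y \to x'$, supplied by $X \Rightarrow Y$) and the fact that both $v$ and $x''$ are genuine in-neighbors of the endpoints of that edge; everything else is a routine connectivity argument.
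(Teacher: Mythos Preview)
Your proposal is correct and follows essentially the same approach as the paper's proof: both pick a path in $U(H)$ between the known dominated vertex and an arbitrary vertex of $X$, use $X \Rightarrow Y$ to orient all $X$--$Y$ edges as $X \to Y$, and then apply the arc-locally in-semicomplete property to the edge $x' \to y$ (with in-neighbors $v$ of $x'$ and $x''$ of $y$) to propagate ``$v$ dominates'' step by step along the path, resolving the direction via $v \Rightarrow X$. The only cosmetic difference is that the paper writes out the alternating path $x_1y_1x_2y_2\cdots x_k$ explicitly rather than phrasing it as a ``unit step'' plus induction.
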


\begin{proof}
Let $u$ be a vertex in $X$ such that $v \to u$. Let $w$ be a vertex in $X$. Since $H$ is connected, $U(H)$ has a path $P = x_1y_1x_2y_2 \ldots x_{k-1}y_{k-1}x_k$ where $x_1=u$ and $x_k=w$; note that $x_i \in X$ and $y_i \in Y$. We prove by induction that $v$ dominates each vertex $x_i$ in $P$. The base is trivial since $v$ dominates $x_1=u$. Suppose that $v$ dominates $x_{i-1}$. Since $X \Rightarrow Y$, we conclude that $x_{i-1}y_{i-1} \in E(D)$ and $x_i$ dominates $y_{i-1}$. Since $D$ is arc-locally in-semicomplete, $v$ and $x_i$ are adjacent; but $v \Rightarrow X$, and hence, $v \to x_i$. So we conclude that $v$ dominates $w$ and thus $v \mapsto X$.
\end{proof}

For the next lemma, we need to define some sets. Let $D$ be an arc-locally in-semicomplete digraph. Let $(V_1,V_2,V_3)$ be a partition of $V(D)$ as described in Theorem~\ref{lem-arc-resultado}\rm{(ii)}. Recall that $V_1 \mapsto V_2$, $V_1 \cup V_2 \Rightarrow V_3$ and $D[V_2]$ is an odd extended cycle of length at least five. Let $Q:=Q[X_1,X_2,\ldots, X_k]$ be the odd extended cycle corresponding to $D[V_2]$. Let $N_0=V_2$ and for $d \geq 1$ denote by $N_d$ the set of vertices that are at distance $d$ from $V_2$. Note that $N_d \subseteq V_3$ for $d\geq 1$ because $V_1 \mapsto V_2$. For all $i \in \{1,2,\ldots,k\}$, denote by $R_i$ (resp., $L_i$) the subset of $N^+(X_i)$ consisting of those vertices that dominate (resp., are dominated by) some vertex in $N^+(X_{i+1})$ (resp., $N^+(X_{i-1})$). Moreover, let $I_i = N^+(X_i)-(L_i \cup R_i)$ and let $W_i = N^+(L_i \cup I_i \cup R_i) \cap N_2$. Note that $N^+(X_i)=L_i \cup I_i \cup R_i$ (see Figure~\ref{fig-conj}).

\begin{figure}[ht]
\centering
    \tikzset{middlearrow/.style={
	decoration={markings,
		mark= at position 0.6 with {\arrow{#1}},
	},
	postaction={decorate}
}}

\tikzset{shortdigon/.style={
	decoration={markings,
		mark= at position 0.45 with {\arrow[inner sep=10pt]{<}},
		mark= at position 0.75 with {\arrow[inner sep=10pt]{>}},
	},
	postaction={decorate}
}}

\tikzset{digon/.style={
	decoration={markings,
		mark= at position 0.4 with {\arrow[inner sep=10pt]{<}},
		mark= at position 0.6 with {\arrow[inner sep=10pt]{>}},
	},
	postaction={decorate}
}}

\def \n {5}
\def \radius {3cm}
\def \margin {8}

\begin{tikzpicture}[scale = 0.66]

    \draw (1,5) circle (20pt);
    \node (x1)  at (1,5)  {};  
	\node (label_x1)  at (1,4.0) {$X_1$};

	\draw (6,5) circle (20pt); 
	\node (x2)  at (6,5)  {};  
	\node (label_x2)  at (6,4.0) {$X_2$};

	\draw (11,5) circle (20pt);
	\node (x3)  at (11,5)  {};
	\node (label_x3)  at (11,4.0) {$X_3$};

     \draw (0,7) circle (10pt);
     \node (l1)  at (0,7)  {};
     \node (label_l1)  at (0.4,7.4) {\footnotesize{$L_1$}}; 
	 \draw (1,7) circle (10pt);
     \node (i1)  at (1,7)  {};
     \node (label_i1)  at (1.4,7.4) {\footnotesize{$I_1$}}; 	
	 \draw (2,7) circle (10pt);
     \node (r1)  at (2,7)  {};
      \node (label_r1)  at (2.4,7.4) {\footnotesize{$R_1$}};

     \draw (1,9) ellipse (1.6 and 0.6);
     \node (w11)  at (0,9)  {};
     \node (w12)  at (1,9)  {};
     \node (label_w1)  at (1,9) {$W_1$};
     \node (w13)  at (2,9)  {};

     \draw (5,7) circle (10pt);
     \node (l2)  at (5,7)  {};
     \node (label_l2)  at (5.4,7.4) {\footnotesize{$L_2$}}; 
	 \draw (6,7) circle (10pt);
	 \node (i2)  at (6,7)  {};
     \node (label_i2)  at (6.4,7.4) {\footnotesize{$I_2$}}; 
	 \draw (7,7) circle (10pt);
	 \node (r2)  at (7,7)  {};
     \node (label_r2)  at (7.4,7.4) {\footnotesize{$R_2$}};

     \draw (6,9) ellipse (1.6 and 0.6);
     \node (w21)  at (5,9)  {};
	 \node (w22)  at (6,9)  {};
	 \node (label_w2)  at (6,9) {$W_2$};
	 \node (w23)  at (7,9)  {};

     \draw (10,7) circle (10pt);
	 \node (l3)  at (10,7)  {};     
     \node (label_l3)  at (10.4,7.4) {\footnotesize{$L_3$}}; 	
	 \draw (11,7) circle (10pt);
	 \node (i3)  at (11,7)  {};     
	 \node (label_i3)  at (11.4,7.4) {\footnotesize{$I_3$}}; 
	 \draw (12,7) circle (10pt);
	 \node (r3)  at (12,7)  {};
	 \node (label_r3)  at (12.4,7.4) {\footnotesize{$R_3$}};

     \draw (11,9) ellipse (1.6 and 0.6);
     \node (w31)  at (10,9)  {};
	 \node (w32)  at (11,9)  {};
	 \node (w32)  at (11,9)  {};
	 \node (label_w3)  at (11,9) {$W_3$};
	 \node (w33)  at (12,9)  {};

	 \draw[rounded corners] (-2, 8.3) rectangle (14, 9.8) {};
	 \node (label_n2)  at (14.5,9) {$N_2$};

	 \draw[rounded corners] (-2, 6.3) rectangle (14, 7.8) {};
	 \node (label_n2)  at (14.5,7) {$N_1$};

	

 \foreach \from/\to in {x1/x2,x2/x3,x1/l1,x1/i1,x1/r1,x2/l2,x2/i2,x2/r2,r1/l2,l1/w11,i1/w12,r1/w13,l2/w21,i2/w22,r2/w23,r2/l3,x3/l3,x3/i3,x3/r3,l3/w31,i3/w32,r3/w33}
 \draw[edge,middlearrow={>}] (\from) -- (\to);

\end{tikzpicture}
\caption{\centering Illustration of sets $L_i$, $I_i$, $R_i$ e $W_i$.}
\label{fig-conj}
\end{figure}

\begin{lemma}
\label{arc-in-F-prop}
Let $D$ be an arc-locally in-semicomplete digraph. Let $(V_1,V_2,V_3)$ be a partition of $V(D)$ as described in Theorem~\ref{lem-arc-resultado}\rm{(ii)}. Let $Q:=Q[X_1,X_2,\ldots, X_k]$ be the odd extended cycle of length at least five corresponding to $D[V_2]$. Then, the following hold.

\begin{enumerate}
\item[\rm{(i)}] $N_d$ is stable for all $d \geq 2$,

\item[\rm{(ii)}] there are no vertices $x_i \in X_i$, $x_j \in X_j$ and $y \in V_3$ such that $i,j \in \{1,2,\ldots,k\}$, $i \neq j$ and $\{x_i,x_j\} \to y$, 

\item[\rm{(iii)}] there are no vertices $u \in N^+(X_i)$, $v \in N^+(X_j)$ such that $i,j \in \{1,2,\ldots,k\}$, $i \neq j$, $X_i$ and $X_j$ are non-adjacent and $u \to v$,

\item[\rm{(iv)}] $N^+(X_i) \Rightarrow N^+(X_{i+1})$ for all $i \in \{1,2,\ldots,k\}$,

\item[\rm{(v)}] $N^-(N_d) \subseteq N_{d-1} \cup V_1$ for all $d \geq 1$, 

\item[\rm{(vi)}] the digraph $D[N_1]$ does not contain a path of length two,

\item[\rm{(vii)}] $N^+(X_i)$ is stable for all $i \in \{1,2,\ldots,k\}$,

\item[\rm{(viii)}] the sets $L_i$, $I_i$ and $R_i$ are pairwise disjoint, $N^-(L_i) \subseteq R_{i-1} \cup X_i \cup V_1$, $N^-(I_i \cup R_i) \subseteq X_i \cup V_1$, $N^+(R_i) \subseteq W_i \cup L_{i+1}$, $N^+(L_i \cup I_i) \subseteq W_i$ and $X_i \mapsto R_i$ for all $i \in \{1,\ldots, k\}$, 

\item[\rm{(ix)}] $N^-(W_i) \subseteq L_i \cup I_i \cup R_i \cup V_1$ for all $i \in \{1,2,\ldots,k\}$.

\end{enumerate}
\end{lemma}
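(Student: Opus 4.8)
The nine items are interdependent, and I would prove them in an order dictated by those dependencies rather than the listed one — items (i), (v) and (vi) by a single induction on the distance $d$ from $V_2$, with (ii)--(iv) and (vii) feeding in at the bottom layer, and (viii)--(ix) as a final cleanup. The one mechanism used throughout is the defining property of arc-locally in-semicomplete digraphs in the packaged form: \emph{if $pq\in E(D)$, $p'\to p$ and $q'\to q$, then $p'$ and $q'$ are adjacent or equal}. I would always apply it with in-neighbours chosen from one of the rigid structures available — a whole predecessor class $X_{\ell-1}$ of a class $X_\ell$ of $Q$ (since $X_{\ell-1}\mapsto X_\ell$, every vertex of $X_{\ell-1}$ dominates every vertex of $X_\ell$), a shortest-path predecessor from $V_2$, or a vertex known to dominate an entire class — and then derive a contradiction from one of: the rigid adjacency pattern of $Q$ (for $a\neq b$, the classes $X_a,X_b$ are adjacent if and only if $a\equiv b\pm1\pmod k$, and since $k\geq5$ is odd, classes $2$, $3$ or $4$ apart are neither identified nor adjacent); the relations $V_1\mapsto V_2$, $V_1\cup V_2\Rightarrow V_3$ together with the bipartiteness of $D[V_3]$; or an item already proved.

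\textbf{Items (ii)--(iv).} For (ii), if $x_i\to y$ and $x_j\to y$ with $i\neq j$ and $y\in V_3$, applying the property to $x_iy$ with a predecessor of $x_i$ in $X_{i-1}$ and the in-neighbour $x_j$ of $y$ forces $j\equiv i-1$ or $j\equiv i-2\pmod k$, and symmetrically from $x_jy$ we get $j\equiv i+1$ or $j\equiv i+2$; these are incompatible for $k\geq5$. Item (iii) is one application to $u\to v$ with in-neighbours $x_i\in X_i$ of $u$ and $x_j\in X_j$ of $v$, contradicting that $X_i,X_j$ are distinct non-adjacent classes. For (iv), an edge $v\to u$ from $N^+(X_{i+1})$ to $N^+(X_i)$ is excluded by cases on where $v$ lies: if $v\in V_2$ then $v\in X_{i+2}$, and either the cyclic pattern ($u\in V_2$) or (ii) ($u\in V_3$, with $v$ and an $X_i$-vertex both dominating $u$) gives a contradiction; if $v\in V_3$ then $u\in V_3$ as well (by $V_1\cup V_2\Rightarrow V_3$), and applying the property to the edge into $u$ from an $X_i$-vertex $b$, using all of $X_{i-1}$ as in-neighbours of $b$, forces $X_{i-1}\to v$, so $v$ is dominated by vertices of the two classes $X_{i-1}$ and $X_{i+1}$ — contradicting (ii).

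\textbf{The inductive block (v), (i), (vi), (vii).} Item (v) is the heart of the lemma, and I would prove it by induction on $d$: given $z\to v$ with $v\in N_d$, take a shortest-path predecessor $x$ of $v$ (in $N_{d-1}$, or in $V_2$ when $d=1$) and apply the property to $x\to v$, using in-neighbours of $x$ supplied by the layer structure (or by a class of $Q$ when $d=1$) and the in-neighbour $z$ of $v$; the inductive hypothesis, the cyclic rigidity and the bipartiteness of $D[V_3]$ then pin $z$ to $N_{d-1}\cup V_1$. Granting (v): (i) for $d\geq3$ follows because an edge $u\to v$ inside $N_d$ yields, via (v), in-neighbours $u',v'\in N_{d-1}$ that are adjacent or equal, contradicting the inductive stability of $N_{d-1}$, the base $d=2$ coming from (v) and (vi); (vi) follows since (v) at $d=1$ forces $N_1$ to carry no edge through which a $P_3$ could pass, while a $P_3$ in $N_1$ whose vertices each have (by (ii)) a single-class $V_2$-predecessor forces a forbidden pattern in $Q$; and (vii) follows by reducing a would-be edge $u\to v$ inside $N^+(X_i)$ to a transitive triangle $x_i\to u\to v$ with $x_i\in X_i$ (stability of $X_i$ collapses the two $X_i$-predecessors into one) and killing it by a short split on $u\in V_2$ (cyclic pattern) versus $u\in V_3$ (then $u,v\in N_1$ with $u\to v$, contradicting (v) at $d=1$).

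\textbf{Items (viii)--(ix), and the main obstacle.} These are bookkeeping on $L_i,I_i,R_i,W_i$: their disjointness and decomposition are read off the definitions together with (vii); the in/out-neighbourhood containments follow from (v) (in-neighbours of a layer-$1$ vertex lie in $V_2\cup V_1$), from (iii)--(iv) (a vertex of $N^+(X_i)$ interacts with $N^+(X_j)$ only when $X_i,X_j$ are consecutive, and then only via ``$R$-to-$L$'' edges) and from the definitions of $L_i,R_i$; $X_i\mapsto R_i$ uses $V_1\cup V_2\Rightarrow V_3$ one way and one application of the property the other; and (ix) is (v) at $d=2$ together with one more application showing every $N_1$-in-neighbour of $W_i$ lies in $N^+(X_i)$. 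I expect the genuine difficulty to be entirely in item (v): making the BFS-layering from $V_2$ compatible with the arc-locally in-semicomplete structure is what everything else rests on, and it has to be threaded through the induction carefully so that (i), (vi) and the layer statements of (viii)--(ix) can all invoke it. A secondary point of care, throughout (iv)--(ix), is that vertices of $X_{i+1}$ lie in $N^+(X_i)$, so one must consistently separate the $V_2$-part from the $V_3$-part of $N^+(X_i)$ when handling $L_i,I_i,R_i$ and $W_i$; items (ii)--(iv) are exactly what make this separation behave.
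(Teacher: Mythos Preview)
Your plan follows essentially the same route as the paper: repeated applications of the arc-locally in-semicomplete axiom against the rigid adjacency pattern of $Q$, the relations $V_1\mapsto V_2$ and $V_1\cup V_2\Rightarrow V_3$, bipartiteness of $D[V_3]$, and an induction on the distance $d$ from $V_2$. The reordering you propose is sound, and your sketches for (ii)--(iv), (v), (viii) and (ix) match the paper's arguments closely.

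There is, however, one recurring slip. In the paper's convention $N^-(X)$ consists of in-neighbours of $X$ lying \emph{outside} $X$, so item (v) at level $d$ says nothing about edges \emph{inside} $N_d$; this is exactly why (i) and (vi) are separate items. The slip bites you in three places. First, in (vii): an edge $u\to v$ with $u,v\in N_1$ does \emph{not} contradict (v) at $d=1$. The paper kills your transitive triangle $x_i\to u\to v$ via Lemma~\ref{lema_arc-livre-triangulo} instead (that triangle lives in $U(D[V_2\cup V_3])$, which is triangle-free). Second, in the base $d=2$ of (i): the predecessors $u',v'\in N_1$ may well be adjacent, and neither (v) nor (vi) finishes the job; the paper pulls back one further step to $V_2$ (take $x\to y\to v$ with $x\in V_2$, $y\in N_1$, apply the axiom to $yv$ with in-neighbours $x$ and $u$, and force $x\to u$, placing $u$ in $N_1$). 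Third, in (viii): the disjointness $L_i\cap R_i=\emptyset$ comes from (vi) (a vertex in the intersection would sit in the middle of a $P_3$ in $N_1$), not from (vii). None of these is fatal --- each is a one-line fix once you invoke the correct earlier item --- but you should be aware that (v) on its own does not give stability of the layers.
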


\begin{proof}

\textbf{\rm{(i)}} First, towards a contradiction assume there exists an edge $uv$ with $\{u,v\} \subseteq N_2$. Let $x \in Q$ and $y \in N_1$ such that $x \to y$ and $y \to v$. Since $D$ is arc-locally in-semicomplete, it follows that $u$ and $x$ are adjacent. Since $V_2 \Rightarrow V_3$, it follows that $x \to u$, a contradiction because $u \in N_2$. Therefore, $N_2$ is a stable set. Towards a contradiction assume there exists $N_d$ with $d>2$ which is not stable. Choose such $N_d$ with $d$ as small as possible. Let $u,v \in N_d$ such that $u \to v$. Let $x,y$ be the vertices of $N_{d-1}$ that dominate $u$ and $v$, respectively. Since $D[V_3]$ is bipartite, it follows that $x \neq y$. Since $D$ is arc-locally in-semicomplete, it follows that $x$ and $y$ are adjacent, a contradiction to the choice of $d$.

\textbf{\rm{(ii)}} Towards a contradiction, assume that there are vertices $x_i \in X_i$, $x_j \in X_j$ and $y \in V_3$ such that $i,j \in \{1,2,\ldots,k\}$, $i \neq j$ and $\{x_i,x_j\} \to y$. Without loss of generality, assume that $i<j$. By Lemma~\ref{lema_arc-livre-triangulo}, $x_i$ and $x_j$ cannot be adjacent. So $X_{i-1} \neq X_j$ and $X_{j-1} \neq X_i$ where indices are taken modulo $k$. Let $x_{i-1} \in X_{i-1}$ and let $x_{j-1} \in X_{j-1}$. Since $x_jy \in E(D$), $x_i \to y$, $x_{j-1} \to x_j$ and $D$ is arc-locally in-semicomplete, it follows that $x_i \to x_{j-1}$, and hence, $i=j-2$. Using the same argument but with the roles of $X_i$ and $X_j$ exchanged, we conclude that $j=i-2$. This is a contradiction since $k \geq 5$.

\textbf{\rm{(iii)}} Towards a contradiction, assume that there are vertices $u \in N^+(X_i)$, $v \in N^+(X_j)$ such that $i,j \in \{1,2,\ldots,k\}$, $i \neq j$, $X_i$ and $X_j$ are non-adjacent and $u \to v$. Let $x_i$ be a vertex in $X_i$ that dominates $u$ and let $x_j$ be a vertex in $X_j$ that dominates $v$. Since $uv \in E(D)$, $x_i \to u$, $x_j \to v$ and $D$ is arc-locally in-semicomplete, it follows that $x_i$ and $x_j$ are adjacent, a contradiction since $X_i$ and $X_j$ are non-adjacent.

\textbf{\rm{(iv)}} Towards a contradiction, assume without loss of generality that there exists an edge $uv \in E(D)$ such that $u \in N^+(X_3)$ and $v \in N^+(X_2)$. Let $x_3$ be a vertex of $X_3$ that dominates $u$ and let $x_2$ be a vertex of $X_2$ that dominates $v$. Let $x_1 \in X_1$. Since $x_2v \in E(D)$, $u \to v$, $x_1 \to x_2$, $u \in V_3$ and $D$ is arc-locally in-semicomplete, it follows that $x_1 \to u$ contradicting \rm{(ii)}.

\textbf{\rm{(v)}} Towards a contradiction, assume that for some $d \geq 1$ and some $j \neq d$ there exists an edge $uv \in E(D)$ such that $v \in N_d$, $u \in N_j$ and $j \neq d-1$. Choose such $d$ as small as possible. By the definition of $N_d$, it follows that $j>d$. Let $y$ be a vertex in $N_{d-1}$ that dominates $v$ and let $x$ be a vertex of $N_{d-2}$ that dominates $y$, if $d \geq 2$, otherwise let $\{y,x\} \subseteq N_0=V_2$ such that $x \to y$ and $y \to v$. Since $yv \in E(D)$, $u \to v$, $x \to y$ and $D$ is arc-locally in-semicomplete, it follows that $u$ and $x$ are adjacent. By the definition of $N_j$, $u \to x$. Since $u \in V_3$ and $V_2 \Rightarrow V_3$, $x \notin V_2$; so $d \geq 3$. Thus $x \in N_{d-2}$ has an in-neighbor $u \in N_j$ with $j > d-2$, contradicting the choice of $d$.

\textbf{\rm{(vi)}} Towards a contradiction, suppose that there exists a path $P=u_1u_2u_3$ in $D[N_1]$. Let $x_i \in X_i$ be a vertex of $Q$ that dominates $u_3$. Since $u_1 \to u_2$, $u_2u_3 \in E(D)$, $x_i \to u_3$, $D$ is arc-locally in-semicomplete and $V_2 \Rightarrow V_3$, it follows that $x_i \to u_1 $. Let $x_{i-1} \in X_{i-1}$, $x_{i-2} \in X_{i-2}$ and $x_{i-3} \in X_{i-3}$ be vertices of $Q$ where indices are taken modulo $k$. So $x_{i-3} \to x_{i-2}$, $x_{i-2} \to x_{i-1}$ and $x_{i-1} \to x_{i}$. Since $x_{i-1} \to x_i$, $x_iu_3 \in E(D)$, $u_2 \to u_3$, $D$ is arc-locally in-semicomplete and $V_2 \Rightarrow V_3$, it follows that $x_{i-1} \to u_2$. Analogously for $x_{i-2}$, $u_1$ and the edge $x_{i-1}u_2$, we conclude that $x_{i-2} \to u_1$. Again, similarly for $x_{i-3}$, $x_i$ and the edge $x_{i-2}u_1$, we conclude that $x_{i-3}$ and $x_i$ are adjacent, a contradiction to the fact that $Q$ is an extended cycle of length at least five.

\textbf{\rm{(vii)}} Towards a contradiction, assume that there exists an edge $u_1u_2$ in $E(D)$ such that $\{u_1,u_2\} \subseteq N^+(X_i)$, for some $i$ in $\{1,2,\ldots,k\}$. Let $v_1$ and $v_2$ be vertices of $X_i$ that dominate $u_1$ and $u_2$, respectively. By extended cycle definition, $X_i$ is stable. By Lemma~\ref{lema_arc-livre-triangulo}, the graph $U(D[V_2 \cup V_3])$ does not contain a cycle of length three, and hence, $v_1 \neq v_2$. Since $u_1u_2 \in E(D)$, $v_1 \to u_1$, $v_2 \to u_2$ and $D$ is arc-locally in-semicomplete, it follows that $v_1$ and $v_2$ are adjacent, a contradiction to the fact that $X_i$ is a stable set.

\textbf{\rm{(viii)}} By definition, $I_i$ is disjoint from both $L_i$ and $R_i$; also by (\rm{vi}) it follows $L_i \cap R_i = \emptyset$ for all $i \in \{1,2,\ldots,k\}$. Now, towards a contradiction assume that $N^-(L_i \cup I_i \cup R_i) \not\subseteq R_{i-1} \cup X_i \cup V_1$ for some $i \in \{1,2,\ldots,k\}$. Let $v$ be a vertex in $V(D)-(R_{i-1} \cup X_i \cup V_1)$ that dominates a vertex $u$ in $L_i \cup I_i \cup R_i$. By (\rm{ii}), $v \not\in V(Q)$, and by (\rm{v}) it follows $v\not\in N_d$ for all $d\geq 2$. Thus $v \in N^+(X_j)$ for some $j \neq i$. By (\rm{iv}), $j\neq i+1$ but this contradicts (\rm{iii}); so $N^-( L_i \cup I_i \cup R_i ) \subseteq R_{i-1} \cup X_i \cup V_1$. By definition of $L_i$, $I_i$ and $R_i$, it follows that $N^-(L_i) \subseteq R_{i-1} \cup X_i \cup V_1$ and $N^-(I_i \cup R_i) \subseteq X_i \cup V_1$. No vertex in $L_i \cup I_i \cup R_i$ dominates a vertex in $N^+(X_j)$ for $j\not\in\{i-1,i+1\}$ by (\rm{iii}), nor a vertex in $N^+(X_{i-1}) \cup V(Q)$ by (\rm{iv}) and $V_2 \Rightarrow V_3$. By (\rm{vii}), $N^+(X_i)$ is stable for all $i \in \{1,2,\ldots,k\}$, and hence, $N^+(R_i) \subseteq W_i \cup L_{i+1}$ and $N^+(L_i \cup I_i) \subseteq W_i$. Finally, let $u \in X_i$ and let $v\in R_i$; we want to show that $u \to v$. Let  $w \in L_{i+1}$ such that $v \to w$. Let $x \in X_{i+1}$ such that $xw \in E(D)$; since $u \to x$, $v \to w$, $V_2 \Rightarrow V_3$ and $D$ is locally arc in-semicomplete, it follows that $u \to v$.

\textbf{\rm{(ix)}} Towards a contradiction, assume there exists $i \in \{1,2,\ldots,k\}$ such that $N^-(W_i) \not\subseteq L_i \cup I_i \cup R_i \cup V_1$. Let $v$ be a vertex in $V(D)-(L_i \cup I_i \cup R_i \cup V_1)$ that dominates a vertex $w$ in $W_i$. By (\rm{v}), $v \in N^+(X_j)$ for some $j \neq i$. Let $x_j$ be a vertex in $X_j$ such $x_j \to v$ and let $u$ be a vertex in $L_i \cup I_i \cup R_i$ such that $u \to w$. Since $V(Q) \Rightarrow V_3$, $vw \in E(D)$, $x_j \to v$, $u \to w$ and $D$ is arc-locally in-semicomplete, it follows that $x_j \to u$. Let $x_i$ be a vertex in $X_i$ such that $x_i \to u$. Thus $\{x_i,x_j\} \to u$ which contradicts (\rm{ii}).
\end{proof}

In next sections we verify Conjectures~\ref{conj_berge} and \ref{conj_be} for the class of arc-locally (out) in-semicomplete digraphs. First, let $D$ be an arc-locally in-semicomplete digraph and let $H$ be the inverse of $D$. Note that $H$ is an arc-locally out-semicomplete digraph. By definition, an $S_{BE}$-path partition (resp., $S$-path partition) of $D$ is also an $S_{BE}$-path partition (resp., $S$-path partition) in $H$, but with the direction of the paths inverted. Thus $D$ satisfies the BE-property (resp., $\alpha$-property) if and only if $H$ satisfies the BE-property (resp., $\alpha$-property). So from now on we aim to prove Conjectures~\ref{conj_berge} and \ref{conj_be} for arc-locally in-semicomplete digraphs. Moreover, Sambinelli, Silva and Lee~\cite{tesemaycon2018,ssl} proved the following lemmas.

\begin{lemma}[Sambinelli, Silva and Lee, 2018]
\label{samb_part}
Let $D$ be a digraph. If $V(D)$ can be partitioned into $k$ subsets, say $V_1,V_2 , \ldots , V_{k>2}$, such that $D[V_i]$ satisfies the BE-property (resp., $\alpha$-property) and $\alpha(D) = \sum_{i=1}^k \alpha(D[V_i])$, then $D$ satisfies the BE-property (resp., $\alpha$-property).
\end{lemma}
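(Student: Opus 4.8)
The plan is to fix an arbitrary maximum stable set $S$ of $D$, argue that the hypothesis on the stability numbers forces $S$ to split into maximum stable sets of the parts, and then simply glue together the path partitions guaranteed inside each part.

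First I would record the elementary inequality underlying everything: for \emph{any} stable set $S$ of $D$ and any $i$, the set $S \cap V_i$ is a stable set of $D[V_i]$, so $\vert S \cap V_i\vert \le \alpha(D[V_i])$. Now suppose $S$ is a maximum stable set of $D$. Since $V_1,\ldots,V_k$ partition $V(D)$ we have $\vert S\vert = \sum_{i=1}^k \vert S \cap V_i\vert \le \sum_{i=1}^k \alpha(D[V_i]) = \alpha(D) = \vert S\vert$, so equality holds throughout; in particular $\vert S \cap V_i\vert = \alpha(D[V_i])$ for every $i$, i.e. $S \cap V_i$ is a maximum stable set of $D[V_i]$.

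Next, since $D[V_i]$ satisfies the BE-property (resp.\ the $\alpha$-property), for each $i$ there is an $(S\cap V_i)_{BE}$-path partition (resp.\ an $(S\cap V_i)$-path partition) $\sP_i$ of $D[V_i]$. Set $\sP = \bigcup_{i=1}^k \sP_i$. The members of $\sP_i$ are paths of $D$ (as $E(D[V_i])\subseteq E(D)$), they are pairwise disjoint within $\sP_i$, and paths coming from different parts are disjoint because $V_i\cap V_j=\emptyset$; together they cover $\bigcup_i V_i = V(D)$, so $\sP$ is a path partition of $D$. For orthogonality: a path $P\in\sP_i$ lies entirely in $V_i$ and meets $S\cap V_i$ in exactly one vertex, hence meets $S$ in exactly one vertex. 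For the Begin-End condition (in the BE case): each vertex of $S\cap V_i$ starts or ends a path of $\sP_i$, hence of $\sP$, and since $S=\bigcup_i (S\cap V_i)$, every vertex of $S$ starts or ends a path of $\sP$. Thus $\sP$ is an $S_{BE}$-path partition (resp.\ an $S$-path partition) of $D$; as $S$ was an arbitrary maximum stable set, $D$ satisfies the BE-property (resp.\ the $\alpha$-property). The $\alpha$-property case is literally the same argument with the start/end clause deleted.

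There is no genuine obstacle here: the only step that requires a moment's thought is the first one, namely observing that $\alpha(D)=\sum_i \alpha(D[V_i])$ forces every maximum stable set of $D$ to restrict to a maximum stable set on each $V_i$; once that is in hand, the union of the per-part path partitions works verbatim, and the hypothesis $k>2$ plays no role in the proof.
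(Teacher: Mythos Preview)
Your argument is correct and is exactly the natural one: the equality $\alpha(D)=\sum_i\alpha(D[V_i])$ forces any maximum stable set to restrict to a maximum stable set on each part, after which the union of the per-part path partitions is an $S_{BE}$-path partition (resp.\ $S$-path partition) of $D$. The paper does not actually supply a proof of this lemma---it is quoted from Sambinelli, Silva and Lee~\cite{tesemaycon2018,ssl}---but your proof is the standard one and matches what one finds there; your observation that the constraint $k>2$ is irrelevant is also correct.
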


\begin{lemma}[Sambinelli, Silva and Lee, 2018]
\label{samb_clique}
Let $D$ be a digraph. If $D$ has a clique cut, then $V(D)$ can be partitioned into two subsets $V_1$ and $V_2$ such that $\alpha(D) = \alpha(D[V_1]) + \alpha(D[V_2])$.
\end{lemma}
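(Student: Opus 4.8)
The plan is to pass to the underlying graph $G = U(D)$ and run a clique-cutset decomposition, the one nontrivial point being that the clique cut itself may have to be \emph{split} between the two parts, and the split should be decided by how the vertices of the cut interact with one fixed side. Since $G$ has a clique cut $B$, the graph $G - B$ is disconnected, so I would let $A_1$ be the vertex set of one connected component of $G - B$ and let $A_2 = V(G) - B - A_1$ be the (nonempty) union of the remaining components. Because $A_1$ and $A_2$ lie in distinct components of $G - B$, there is no edge of $G$ --- hence no arc of $D$ --- between $A_1$ and $A_2$; this is the property I will exploit to glue stable sets freely.

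Call a vertex $b \in B$ \emph{good} if $\alpha(D[A_2 \cup \{b\}]) = \alpha(D[A_2]) + 1$, equivalently if $b$ is non-adjacent to some maximum stable set of $D[A_2]$. I would then set $V_1 = A_1 \cup \{b \in B : b \textrm{ is good}\}$ and $V_2 = A_2 \cup \{b \in B : b \textrm{ is not good}\}$, which partitions $V(D)$. Since $\alpha(D) \le \alpha(D[V_1]) + \alpha(D[V_2])$ holds for any partition, everything reduces to the reverse inequality. Two observations prepare the ground, both using that $B$ is a clique and so meets every stable set of $D$ in at most one vertex: first, $\alpha(D[V_2]) = \alpha(D[A_2])$, because a maximum stable set of $D[V_2]$ uses at most one vertex of $B$ and such a vertex, being not good, gains nothing over $D[A_2]$; second, $\alpha(D[A_1]) \le \alpha(D[V_1]) \le \alpha(D[A_1]) + 1$.

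I would then split into the two cases allowed by the last inequality. If $\alpha(D[V_1]) = \alpha(D[A_1])$, the union of a maximum stable set of $D[A_1]$ and a maximum stable set of $D[A_2]$ is a stable set of $D$ of size $\alpha(D[A_1]) + \alpha(D[A_2]) = \alpha(D[V_1]) + \alpha(D[V_2])$. If $\alpha(D[V_1]) = \alpha(D[A_1]) + 1$, then a maximum stable set of $D[V_1]$ must contain a (necessarily good) vertex $b_0 \in B$, with the rest of that set being a maximum stable set $T_1$ of $D[A_1]$ non-adjacent to $b_0$; since $b_0$ is good it is also non-adjacent to a maximum stable set $T_2$ of $D[A_2]$, so $T_1 \cup T_2 \cup \{b_0\}$ is a stable set of $D$ of size $\alpha(D[A_1]) + \alpha(D[A_2]) + 1 = \alpha(D[V_1]) + \alpha(D[V_2])$. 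In either case $\alpha(D) \ge \alpha(D[V_1]) + \alpha(D[V_2])$, completing the argument.

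The step I expect to be the real obstacle is the distribution of $B$: putting all of $B$ on one side can force $\alpha(D[V_1]) + \alpha(D[V_2])$ to exceed $\alpha(D)$ by one --- this already fails on a path with four vertices whose two middle vertices form $B$, where $B$ has one vertex extendable into $A_1$ and a different vertex extendable into $A_2$, but neither extendable into both. The good/not-good split relative to $A_2$ is precisely what forces at most one of $V_1, V_2$ to benefit from a vertex of $B$, and ensures that when one does, the responsible vertex is good, hence also reaches a maximum stable set of $A_2$, so that the extra unit can be realized by an actual stable set of $D$.
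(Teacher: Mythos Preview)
Your argument is correct. The paper does not supply its own proof of this lemma; it is quoted from Sambinelli, Silva and Lee~\cite{tesemaycon2018,ssl} and used as a black box, so there is no in-paper proof to compare against. Your good/not-good split of the clique $B$ relative to $A_2$ is exactly the right device: it guarantees $\alpha(D[V_2])=\alpha(D[A_2])$, confines the possible $+1$ to the $V_1$ side, and ensures that when that $+1$ occurs the witnessing clique vertex is simultaneously extendable into $A_2$, so the sum is realised by a genuine stable set of $D$. The example you give (a four-vertex path with the two middle vertices as $B$) correctly shows why a naive assignment of all of $B$ to one side can fail.
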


Thus by Lemmas~\ref{samb_part} and \ref{samb_clique} if a digraph $D$ is a minimal counterexample for Conjectures~\ref{conj_berge} or \ref{conj_be}, then $D$ is connected and $D$ has no clique cut. Moreover, by Lemmas~\ref{diper-alpha} and \ref{diperD-BE}, Conjectures~\ref{conj_berge} and \ref{conj_be} hold for diperfect digraphs. So we may assume that $D$ is connected, not diperfect and has no clique cut. Therefore, $V(D)$ admits a partition as described in Theorem~\ref{lem-arc-resultado}\rm{(ii)}.

\section{Begin-End Conjecture}
\label{be-alis}

In this section we prove that Conjecture~\ref{conj_be} holds for arc-locally (out) in-semicomplete digraphs. Recall that $\D$ denotes the set of all digraphs containing no induced blocking odd cycle. 

First we present an outline of the main proof. Let $D$ be an arc-locally in-semicomplete digraph. Note that every induced subdigraph of $D$ is also an arc-locally in-semicomplete digraph. Thus, it is suffices to show that $D$ satisfies the BE-property. By Theorem~\ref{lem-arc-resultado}\rm{(ii)}, $V(D)$ admits a partition $(V_1,V_2,V_3)$ as described in the statement. First, we show that if $D \in \D$, then $V_1=\emptyset$. Next, we show that an extended cycle satisfies the BE-property. Finally, we show that if $V_3 \neq \emptyset$, then $D$ satisfies the BE-property. This last case is divided into two subcases, depending on whether there exists a vertex $v$ in $V_3$ such that $\textrm{dist}(V_2,v) \geq 3$ or not.

\begin{lemma}
\label{lema-arc-V1-empty}
Let $D$ be an arc-locally in-semicomplete digraph. Let $(V_1,V_2,V_3)$ be a partition of $V(D)$ as described in Theorem~\ref{lem-arc-resultado}\rm{(ii)}. If $D \in \D$, then $V_1=\emptyset$. 
\end{lemma}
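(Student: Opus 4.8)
The plan is to derive a contradiction from the assumption that $V_1 \neq \emptyset$ by exhibiting an induced blocking odd cycle in $D$, thereby contradicting $D \in \D$. The natural source of such a cycle is the odd extended cycle $Q := Q[X_1, X_2, \ldots, X_k]$ corresponding to $D[V_2]$, together with one vertex $u \in V_1$. Recall that $V_1 \mapsto V_2$, so $u$ dominates every vertex of $V_2$; in particular $u \to x_1$ and $u \to x_k$ for any choice $x_i \in X_i$. First I would fix one representative $x_i \in X_i$ for each $i$, obtaining an induced odd cycle $x_1 x_2 \cdots x_k x_1$ in $D[V_2]$ (it is induced because $Q$ is an extended cycle, so the only edges among the $x_i$ are the consecutive ones, and $U(Q)$ has length $k \geq 5$, hence no chords).

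The key point is to select the representatives and the vertex $u$ so that the resulting object on $\{u, x_1, \ldots, x_k\}$ is an \emph{induced} blocking odd cycle. Taking $u$ together with the path $x_2 x_3 \cdots x_k$ (dropping $x_1$) gives a non-oriented cycle $u\, x_k\, x_{k-1} \cdots x_2\, u$ on $k$ vertices — wait, that has even underlying length $k$ which is odd since $k$ is odd, good: the vertices are $u, x_2, x_3, \ldots, x_k$, that is $k$ vertices, and $U$ of this is a cycle of length $k$ (odd). Here $u$ is a source (it dominates $x_2$ and $x_k$, and has no in-neighbor in this set since $V_1 \mapsto V_2$ means nothing in $V_2$ dominates $u$), and I need a sink among $x_2, \ldots, x_k$. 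The vertex $x_2$ has in-neighbors $x_1$ (absent) and $u$, and out-neighbor $x_3$ — not a sink. So instead I should keep $x_1$ and drop a different vertex, or rather: consider $u$ together with $x_1 x_2 \cdots x_{k}$ minus the edge $x_k x_1$, i.e. the path $x_1 x_2 \cdots x_k$, and close it via $u$: the cycle $x_1\, x_2 \cdots x_k\, u\, x_1$. Its underlying length is $k+1$ (even) — no good. The right move is to drop one interior vertex $x_j$ and route through $u$: take $u$, $x_j$'s two "sides". Actually the cleanest: the set $\{u, x_1, x_2, \ldots, x_{k}\}$ minus $x_1$, reconnected as $u \to x_2$, path $x_2 \cdots x_k$, and $x_k \to u$. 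Then $u$ is a source. For the sink: I need to arrange that some $x_m$ on this path is a sink within the induced subdigraph. Since the only edges are path edges plus the two $u$-edges, the candidate sinks are the path-endpoints' neighbors; $x_k \to u$ so $x_k$ is not a sink, $x_2$ has $x_2 \to x_3$. So no vertex is a sink — this configuration is an anti-directed-free cycle but not obviously blocking.

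The resolution, which I expect to be the main obstacle, is to use the partition structure more carefully: since $D$ has no clique cut and is not diperfect (as noted after Lemma~\ref{samb_clique}) we are in case (ii), and $k \geq 5$. I would instead build the blocking cycle on $\{u\} \cup V(T)$ where $T$ is a sub\emph{path} of $Q$ of even length (odd number of vertices) $x_a x_{a+1} \cdots x_b$ with $u \to x_a$, $u \to x_b$, but choosing $a, b$ so that \emph{some} internal vertex, or rather re-examine: in a blocking odd cycle $U(C) = y_1 y_2 \cdots y_{2t+1} y_1$ with $y_1$ a source and $y_2$ a sink, $y_1$ and $y_2$ are \emph{consecutive}. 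So I want $u = y_1$ a source adjacent to $x = y_2$ a sink. Thus I need $x \in V_2$ that is a sink in the induced subdigraph. Take $x \in X_i$; its out-neighbor in $Q$ is in $X_{i+1}$, so to kill that I must not include any vertex of $X_{i+1}$ on the cycle. So: route the cycle as $u \to x_i$, then \emph{backwards} $x_i \leftarrow x_{i-1} \leftarrow \cdots$? No, it must be a path in $U$. Concretely: pick the path $P = x_{i+1} x_{i+2} \cdots x_{i-1} x_i$ going the "long way" around $Q$ from $X_{i+1}$ to $X_i$ (length $k-1$, so $k$ vertices; but we want to add $u$ giving $k+1$, even). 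Drop $x_{i+1}$: path $x_{i+2} \cdots x_i$ has $k-1$ vertices; with $u$: $k$ vertices, $k$ odd — good. Now $u \to x_{i+2}$ (since $V_1 \mapsto V_2$) and we close with $x_i \to ?$ — we need $x_i$ adjacent to $u$: yes $u \to x_i$. So the cycle is $u, x_{i+2}, x_{i+3}, \ldots, x_i$, with edges $u \to x_{i+2}$, $x_{i+2} \to x_{i+3} \to \cdots \to x_{i-1} \to x_i$, and $u \to x_i$. Then $u$ is a source (it dominates its two neighbors $x_{i+2}$ and $x_i$, and has no in-neighbor among $V_2$), and $x_i$ is a sink: its neighbors on the cycle are $x_{i-1}$ (with $x_{i-1} \to x_i$) and $u$ (with $u \to x_i$), both pointing in. And this is induced: among the chosen $x$'s the only $Q$-edges are the consecutive ones along the path (no chords since $Q$ is extended of length $\geq 5$ and we omitted exactly one vertex, keeping a sub-path), and $u$'s only edges to $V_2$-vertices are the two we used — wait, no: $u \to$ \emph{every} vertex of $V_2$, so $u$ is adjacent to all of $x_{i+2}, \ldots, x_i$, creating chords! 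That breaks inducedness.

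To fix the chord problem I would not use a single $u \in V_1$ joined to many $V_2$-vertices, but rather exploit that we only need \emph{three} vertices: a transitive triangle is already a blocking odd cycle (the $k=1$ case). So I would argue: pick $u \in V_1$, pick $x_1 \in X_1$, $x_2 \in X_2$. Then $u \to x_1$ (as $V_1 \mapsto V_2$), $x_1 \to x_2$ (edge of $Q$), and $u \to x_2$ (again $V_1 \mapsto V_2$). So $D[\{u, x_1, x_2\}]$ contains the transitive triangle $u \to x_1 \to x_2$, $u \to x_2$ — but I must check it is \emph{induced}, i.e. that there are no additional edges: $x_2 \to x_1$? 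No, since $X_1 \mapsto X_2$ gives $X_2 \Rightarrow X_1$, so no edge from $x_2$ to $x_1$. $x_1 \to u$ or $x_2 \to u$? No, since $V_1 \mapsto V_2$ means $V_2 \Rightarrow V_1$, so there is no edge from $V_2$ into $V_1$. Hence $D[\{u,x_1,x_2\}]$ is exactly the transitive triangle, which is an induced blocking odd cycle, contradicting $D \in \D$. Therefore $V_1 = \emptyset$. I would write the proof in this last, short form; the only subtlety to double-check is that all the needed domination facts ($V_1 \mapsto V_2$, $X_1 \mapsto X_2$) are exactly the ones recorded in Theorem~\ref{lem-arc-resultado}(ii) and the definition of an extended cycle, and that $X_1, X_2$ are nonempty (true by definition of the extended cycle) and that $k \geq 3$ so that $X_2$ exists and differs from $X_1$ (true since the cycle has length $\geq 5$).

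\begin{proof}
Suppose, for a contradiction, that $V_1 \neq \emptyset$, and let $u \in V_1$. Let $Q := Q[X_1, X_2, \ldots, X_k]$ be the odd extended cycle of length $k \geq 5$ corresponding to $D[V_2]$, so that $X_1 \mapsto X_2 \mapsto \cdots \mapsto X_k \mapsto X_1$. In particular $X_1$ and $X_2$ are nonempty; pick $x_1 \in X_1$ and $x_2 \in X_2$. By the definition of the extended cycle, $x_1 \to x_2$ and $X_2 \Rightarrow X_1$, so there is no edge from $x_2$ to $x_1$. Since $V_1 \mapsto V_2$, we have $u \to x_1$ and $u \to x_2$, and moreover $V_2 \Rightarrow V_1$, so there is no edge from $x_1$ or $x_2$ to $u$. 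Hence the digraph $D[\{u, x_1, x_2\}]$ has exactly the edges $u \to x_1$, $x_1 \to x_2$ and $u \to x_2$; that is, $D[\{u, x_1, x_2\}]$ is a transitive triangle, which is an induced blocking odd cycle in $D$. This contradicts $D \in \D$. Therefore $V_1 = \emptyset$.
\end{proof}
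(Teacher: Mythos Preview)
Your final proof is correct and takes essentially the same approach as the paper: pick any $u\in V_1$ and any edge $x_1x_2$ of the extended cycle $D[V_2]$, and observe that $D[\{u,x_1,x_2\}]$ is an induced transitive triangle, contradicting $D\in\D$. Your write-up is in fact more careful than the paper's in verifying that the triangle is induced (checking $X_2\Rightarrow X_1$ and $V_2\Rightarrow V_1$ explicitly), though the lengthy exploration of longer cycles in your plan was unnecessary once you remembered that the transitive triangle is already a blocking odd cycle.
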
 

\begin{proof}
Towards a contradiction, assume that there exists $v$ in $V_1$. Let $xy$ be an edge of $D[V_2]$. Since $V_1 \mapsto V_2$ and $D[V_2]$ is an extended cycle, it follows that $D[\{v,x,y\}]$ is a transitive triangle, a contradiction to the fact that $D \in \D $.
\end{proof}

Next, we prove that an extended cycle satisfies the BE-property. 

\begin{lemma}
\label{lema_circuito_estendido}
If a digraph $D \in \D$ is an extended cycle, then $D$ satisfies the BE-property.
\end{lemma}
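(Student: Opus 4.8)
Let $D=Q[X_1,X_2,\ldots,X_k]$ be an extended cycle belonging to $\D$. The plan is to first reduce to the case where $Q$ is an odd cycle: if $k$ is even, then $U(D)$ is bipartite (the bipartition being $(X_1\cup X_3\cup\cdots\cup X_{k-1},\ X_2\cup X_4\cup\cdots\cup X_k)$), so $D$ is diperfect, and moreover $D$ contains no transitive triangle (indeed no odd non-oriented cycle at all), hence $D\in\D$ forces nothing further and Lemma~\ref{diperD-BE} gives the BE-property. Similarly if $k=1$ then $D$ is a disjoint union of arcless... wait, $k\geq 2$ by definition, so the remaining case is $k$ odd with $k\geq 3$. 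Here I must be a little careful: a transitive triangle is itself an odd extended cycle (with $k=3$ and each $X_i$ a singleton) but an extended odd cycle with some $|X_i|\geq 2$ is \emph{not} a blocking odd cycle as an induced subdigraph in the obvious way — however it does contain induced transitive triangles whenever two consecutive parts both... no: $X_{i-1}\mapsto X_i\mapsto X_{i+1}$ and $X_{i-1},X_{i+1}$ are non-adjacent when $k\geq 5$, so $D[\{x_{i-1},x_i,x_{i+1}\}]$ is a \emph{directed} path $P_3$, not a transitive triangle. So for $k\geq 5$ the only way $D\notin\D$ would be via a longer blocking odd cycle; one checks directly from the structure $X_1\mapsto\cdots\mapsto X_k\mapsto X_1$ that the only induced odd cycles in $U(D)$ with a source-and-sink are forced to have length... this needs thought. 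The cleanest route: since $D\in\D$ by hypothesis, I do not need to analyze which extended cycles lie in $\D$; I just need to prove every extended cycle in $\D$ has the BE-property.

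\textbf{Main argument for $k$ odd, $k\geq 3$.} Let $S$ be a maximum stable set of $D$. Since the $X_i$ are stable and $X_i\mapsto X_{i+1}\mapsto X_{i+2}$, any stable set meets at most... actually $S$ can meet several $X_i$'s only in a restricted pattern: if $S$ meets both $X_i$ and $X_{i+1}$ that is impossible since $X_i\mapsto X_{i+1}$ makes every such pair adjacent. Hence the indices $i$ with $S\cap X_i\neq\emptyset$ form an independent set in the cycle $C_k$, so there are at most $\lfloor k/2\rfloor$ of them. The first key step is to show $\alpha(D)=\max_i |X_i| \cdot$(something) — more precisely, since $k$ is odd, $C_k$ has independence number $(k-1)/2$, and a maximum stable set of $D$ picks $(k-1)/2$ of the parts, no two consecutive, and takes all of each chosen part; thus $\alpha(D)=\max\{\sum_{i\in A}|X_i| : A\text{ independent in }C_k, |A|=(k-1)/2\}$. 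Fix such a maximum $S$ with part-index set $A$; relabel so that $A=\{1,3,5,\ldots,k-2\}$ and the two "missing" consecutive non-chosen positions are $k-1$ and $k$ (this is where oddness is used: exactly one pair of consecutive indices is skipped). The second key step is to exhibit an $S_{BE}$-path partition explicitly. Each chosen part $X_{2j-1}$ lies entirely in $S$ so each of its vertices must be an endpoint of its path. The idea is to route the vertices of the non-chosen parts $X_2,X_4,\ldots,X_{k-3}$ and $X_{k-1},X_k$ so that they get absorbed into paths ending (or starting) at an $S$-vertex, one vertex of $S$ per path. Concretely, between consecutive chosen parts $X_{2j-1}$ and $X_{2j+1}$ sits exactly one non-chosen part $X_{2j}$; I want a path partition of $D[X_{2j-1}\cup X_{2j}\cup X_{2j+1}]$ in which every vertex of $X_{2j-1}$ and $X_{2j+1}$ is an endpoint and every path contains exactly one of them, while $X_{2j}$'s vertices are interior or endpoints-not-in-$S$. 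Using the complete domination $X_{2j-1}\mapsto X_{2j}\mapsto X_{2j+1}$, a Hall/matching argument (via Lemma~\ref{arc-in-matching-N(S)} or a direct bipartite matching) lets me split $X_{2j}$ and attach its vertices as middles of paths $x_{2j-1}\, y\, x_{2j+1}$ or as trailing vertices $x_{2j-1}\,y$ or leading vertices $y\,x_{2j+1}$ depending on the sizes $|X_{2j-1}|,|X_{2j}|,|X_{2j+1}|$; leftover vertices of a chosen part that aren't matched become singleton paths (they are in $S$, so that's fine for BE). The gap $X_{k-1},X_k$ (the skipped consecutive pair) is handled using $X_{k-2}\mapsto X_{k-1}\mapsto X_k\mapsto X_1$: attach $X_{k-1}$ and $X_k$ as tails/heads of paths anchored at $X_{k-2}\subseteq S$ and $X_1\subseteq S$.

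\textbf{The main obstacle and how to handle it.} The delicate point is \emph{counting}: one must verify that the sizes work out so that every vertex of $\bigcup_{j}X_{2j}$ and of $X_{k-1}\cup X_k$ can be absorbed while keeping exactly one $S$-vertex per path and keeping $S$-vertices at endpoints. This is essentially a flow-feasibility statement, and the reason it succeeds is precisely that $S$ was chosen \emph{maximum}: if some non-chosen part $X_{2j}$ were too large to be absorbed by its two neighbors $X_{2j-1}\cup X_{2j+1}$, then swapping would produce a larger stable set, contradicting maximality — formally this is Lemma~\ref{arc-in-matching-N(S)} applied with $X$ a non-chosen part (or union of non-chosen parts) and $Y=N(X)\cap S$, which guarantees a matching covering $X$ into $S$, and that matching is exactly the set of path-edges I need. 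So the structure of the proof is: (1) dispose of $k$ even and $k=$ small via diperfectness / Lemma~\ref{diperD-BE}; wait — I realize $k=3$ with singletons is a transitive triangle which is \emph{not} in $\D$, so the hypothesis $D\in\D$ already excludes it; (2) for $k$ odd determine $\alpha(D)$ and the shape of a maximum $S$; (3) invoke Lemma~\ref{arc-in-matching-N(S)} on each non-chosen part to get matchings into $S$; (4) assemble the matchings plus leftover singletons into paths, checking that each path has its unique $S$-vertex at an end. I expect step (4)'s endpoint bookkeeping — making sure we never create a path whose only $S$-vertex is interior — to be the fiddliest part, but it is forced once one observes that the $S$-vertices all live in the "even-distance" parts $X_{2j-1}$ and $X_{k-2}$, which are exactly the out-ends or in-ends of the short paths we build; the orientation $X_{i}\mapsto X_{i+1}$ makes each short path a coherent directed path with $S$-vertices automatically at its extremities.
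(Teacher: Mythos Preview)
Your reduction for even $k$ (bipartite $\Rightarrow$ diperfect $\Rightarrow$ Lemma~\ref{diperD-BE}) matches the paper. For odd $k$, however, your argument rests on a structural claim that is false. You assert that a maximum stable set of $Q[X_1,\ldots,X_k]$ always selects exactly $(k-1)/2$ of the parts, so that after relabelling $A=\{1,3,\ldots,k-2\}$ and there is \emph{exactly one} pair of consecutive skipped indices. This fails already at $k=9$: take $|X_1|=|X_4|=|X_7|=100$ and all other $|X_i|=1$. The set $\{1,4,7\}$ is a \emph{maximal} independent set in $C_9$ of size $3<4=(k-1)/2$, and $X_1\cup X_4\cup X_7$ has $300$ vertices, while every $4$-part independent set in $C_9$ must omit at least one of $X_1,X_4,X_7$ and therefore yields at most $202$. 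Thus the unique maximum stable set uses only three parts, leaving three gaps of length $2$ rather than one. (This digraph is in $\D$: for $k\ge5$ the underlying graph is triangle-free, and by Lemma~\ref{arc-in-circuito} there is no induced non-oriented odd cycle of length $\ge5$.) All that survives in general is that $A$ is a \emph{maximal} independent set in $C_k$, so gaps have length $1$ or $2$; the number of length-$2$ gaps is $k-2|A|$, which can be anything from $1$ up to roughly $k/3$. Your matching/absorption scheme is written only for the single-gap situation, so it does not cover the general case. It could likely be repaired by treating each length-$2$ gap separately, but that is substantially more bookkeeping than you have sketched, and the simultaneous compatibility of the matchings across gaps still has to be argued.

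A minor side remark: your aside that ``$k=3$ with singletons is a transitive triangle'' is not right---an extended cycle has $X_1\mapsto X_2\mapsto X_3\mapsto X_1$, so with singleton parts it is a \emph{directed} $3$-cycle (no source, no sink), which is in $\D$ and for which the BE-property is immediate.

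For comparison, the paper avoids the whole issue by induction on $|V(D)|$: it only uses that \emph{some} two consecutive parts miss $S$ and that the preceding part lies in $S$, removes a single path $x_1x_2x_3$ with $x_1\in S$, checks that $S-\{x_1\}$ is still maximum in the remainder (which is either bipartite or a smaller odd extended cycle), and appends the $S'_{BE}$-partition obtained inductively. This needs no global description of $S$.
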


\begin{proof}
Let $D:= D[X_1, X_2, \dots, X_k]$ be an extended cycle and let $S$ be a maximum stable set of $D$. Recall that $X_1 \mapsto X_2 \mapsto \cdots \mapsto X_k \mapsto X_1$. If $k$ is even, then $D$ is a bipartite digraph. Since a bipartite digraph is diperfect, the result follows by Lemma~\ref{diperD-BE}. Thus, we may assume that $k$ is odd. Note that for each $X_i$, it follows that $ X_i \cap S = \emptyset $ or $ X_i \subseteq S$, because $X_i \mapsto X_{i + 1}$ for all $ i \in \{1,2,\ldots, k\} $. Also, if $X_i \cap S = X_i$, then $ X_{i + 1} \cap S = X_{i-1} \cap S = \emptyset$. Since $k$ is odd, there exists some $i$ such that $ X_i \cap S = X_ {i + 1}\cap S = \emptyset$. Now we proceed to prove the result by induction on $\vert V(D)\vert $.

If $D$ is an odd cycle, that is, each $X_i$ is singleton, then the result follows easily. Without loss of generality, assume that $X_1 \subseteq S$ and $X_2 \cap S = X_3 \cap S = \emptyset$. Let $P = x_1x_2x_3$ be a path with $x_i \in X_i $ for $ i \in \{1,2,3\} $. Let $D'= D -\{x_1,x_2,x_3\}$ and let $S'= S-x_1$. We show next that $S'$ is a maximum stable set in $D'$. Towards a contradiction, assume that $S'$ is not a maximum stable set in $D'$ and let $Z$ be a maximum stable set in $D'$. So $\vert Z\vert  > \vert S'\vert  = \vert S\vert -1$, and this implies that $\vert Z\vert  = \vert S\vert $. Note that $Z$ must necessarily contain one of the sets $X_i-x_i$, $i \in \{1,2,3\}$, otherwise $Z \cup X_2$ would be a stable set in $D$ larger than $S$, a contradiction. Assume that $X_i-x_i \subseteq Z$ for some $i \in \{1,2,3\}$. Thus $Z \cup x_i$ is a stable set larger than $S$ in $D$, a contradiction. Therefore, $S'$ is maximum in $D'$. If $D'$ is disconnected, then $D'$ is bipartite, and hence, satisfies the BE-property. If $D'$ is connected, then $D'$ is an odd extended cycle with $\vert V(D')\vert  < \vert V(D)\vert $, and by induction hypothesis $D'$ satisfies the BE-property. Let $\sP'$ be an $S'_{BE}$-path partition of $D'$. Thus $ \sP' \cup P$ is an $S_{BE}$-path partition $D$. This finishes the proof.    
\end{proof}

Let $D$ be an arc-locally in-semicomplete digraph. Let $(V_1,V_2,V_3)$ be a partition of $V(D)$ as described in Theorem~\ref{lem-arc-resultado}\rm{(ii)}. Let $Q:=Q[X_1,X_2, \ldots, X_k]$ be the odd extended cycle of length at least five corresponding to $D[V_2]$. Recall that $N_d$ is the set of vertices that are at distance $d$ from $Q$, $R_i$ (resp., $L_i$) the subset of $N^+(X_i)$ consisting of those vertices that dominate (resp., are dominated by) some vertex in $N^+(X_{i+1})$ (resp., $N^+(X_{i-1})$). Moreover, $I_i = N^+(X_i)-(L_i \cup R_i)$ and $W_i = N^+(L_i \cup I_i \cup R_i) \cap N_2$.

\begin{lemma}
\label{arc-in-case-d<=2}
Let $D$ be an arc-locally in-semicomplete digraph such that every proper induced subdigraph of $D$ satisfies the BE-property. Let $(V_1,V_2,V_3)$ be a partition of $V(D)$ as described in Theorem~\ref{lem-arc-resultado}\rm{(ii)}. If $N_d = \emptyset$ for $d \geq 3$ and $V_1=\emptyset$, then $D$ satisfies the BE-property. 
\end{lemma}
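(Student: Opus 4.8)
The plan is to fix an arbitrary maximum stable set $S$ of $D$ and build an $S_{BE}$-path partition by peeling off the three layers $V_2$, $N_1=\bigcup_i N^+(X_i)$ and $N_2=\bigcup_i W_i$ from the outside in. First I would record the consequences of the two hypotheses. Since $V_1=\emptyset$ and $N_d=\emptyset$ for $d\ge 3$ we have $V(D)=V_2\cup N_1\cup N_2$; combining Lemma~\ref{arc-in-F-prop}(i),(v) with $V_2\Rightarrow V_3$ and $N_3=\emptyset$ shows that every vertex of $N_2$ is a sink of $D$, so $N(N_2)=N^-(N_2)\subseteq N_1$, and $V_2$ is non-adjacent to $N_2$; in particular $N_2\cup T$ is stable whenever $T\subseteq V_2$ is stable. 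Also $D[N_1]$ has no path of length two by Lemma~\ref{arc-in-F-prop}(vi), and Lemma~\ref{arc-in-F-prop}(viii)--(ix) describe precisely how the layers are joined (for instance $X_i\mapsto R_i$, $N^+(L_i\cup I_i)\subseteq W_i$, $N^+(R_i)\subseteq W_i\cup L_{i+1}$ and $N^-(W_i)\subseteq L_i\cup I_i\cup R_i$).

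Next I would dispose of the degenerate cases. By Theorem~\ref{arc-in-maior-S} we may assume $\alpha(D)<\frac{|V(D)|}{2}$; by Lemma~\ref{stable_set_S_menor_vizinhanca} we may assume $|N(Z)|>|Z|$ for every stable set $Z$ of $D$ (so in particular $|N^-(N_2)|>|N_2|$); and by Lemma~\ref{arc-in-n-cobre-S} we may assume that there is a matching between $S$ and $N(S)$ covering $S$. These reductions are what keep the matchings constructed below large enough to be useful.

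Then comes the construction. Writing $S_j=S\cap N_j$ for $j=0,1,2$ (with $S_0=S\cap V_2$), in the first round I absorb $N_2$: each $w\in N_2\setminus S$ is a sink whose neighbours lie in $N_1$, so by maximality of $S$ it has an in-neighbour in $S_1$; since $N_2\setminus S$ is stable and disjoint from $S$, Lemma~\ref{arc-in-matching-N(S)} provides a matching of $N_2\setminus S$ into $S_1$, which I realize as $2$-vertex paths starting in $S$, while each $w\in S_2$ becomes a one-vertex path (a sink in $S$ starts and ends its own path). After checking---this is exactly where the structural facts above enter---that the surviving part of $S$ is still a maximum stable set of the digraph left after the deletions, I run the analogous round one layer in, absorbing $N_1\setminus S$ by matching it into $S_0\cup S_1$ along edges of $D[V_2\cup N_1]$; here the facts that $D[N_1]$ has no path of length two, that $X_i\mapsto R_i$, and the domination relations of Lemma~\ref{arc-in-F-prop}(viii) are used to choose the edges and paths (of type $x_i\to u$ or $x_i\to u\to w$) so that the unique $S$-vertex of each path sits at one of its ends. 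What remains is an induced subdigraph of the odd extended cycle $D[V_2]$ (together with possibly a few vertex-disjoint bipartite pieces), which satisfies the BE-property by Lemma~\ref{lema_circuito_estendido} and Lemma~\ref{samb_part}; reattaching the path fragments from the two rounds yields the required $S_{BE}$-path partition.

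The step I expect to be the main obstacle is the bookkeeping that legitimizes the two rounds: after each batch of deletions one must certify that the surviving portion of $S$ is still a \emph{maximum} stable set of the surviving digraph (so that the hypothesis that every proper induced subdigraph of $D$ satisfies the BE-property can be applied), and one must route every path so that its lone $S$-vertex is never trapped in the interior of a length-two path running from $V_2$ through $N_1$ to $N_2$. I expect the genuinely delicate situation to be the one in which $S_0$ is a nonempty union of several $X_i$'s interacting with the pendant sets $L_i,I_i,R_i,W_i$; there one probably has to split into subcases according to which $X_i$ lie in $S$---echoing the parity argument in the proof of Lemma~\ref{lema_circuito_estendido}---and to invoke Lemmas~\ref{arc-in-par-W-B-U}, \ref{arc-in-H-W-U} and~\ref{bip-x-y-font}, whose hypotheses look designed precisely to absorb such configurations.
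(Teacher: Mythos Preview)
Your outline assembles the right ingredients but it is a plan, not a proof, and the step you yourself flag as ``the main obstacle'' is precisely where it fails to go through. The layer-by-layer peeling stalls at the verification that, after deleting $N_2$ together with the matched portion of $S_1$, the remainder $S'=S\setminus\bigl(S_2\cup(S_1\cap V(M_1))\bigr)$ is still maximum in the surviving digraph $D'$. If $Z$ is a larger stable set of $D'$, neither $Z\cup S_2\cup(S_1\cap V(M_1))$ nor $Z\cup N_2$ is forced to be stable in $D$: vertices of $Z\cap N_1$ may be adjacent both to the removed $S_1$-vertices and to $N_2$, so the usual counting gives no contradiction. Your second round has a further difficulty you gloss over: $N_1\setminus S$ need not be stable (edges $R_i\to L_{i+1}$ are allowed), so Lemma~\ref{arc-in-matching-N(S)} does not apply to it directly, and paths of the shape $x_i\to u\to w$ cannot end in $N_2$, which has already been removed.

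The paper does not peel layers at all. It first performs four reductions, each of which either produces an $S_{BE}$-path partition outright (via Lemma~\ref{arc-in-par-W-B-U} or Lemma~\ref{stable_set_S_menor_vizinhanca}) or imposes a structural constraint: one may assume $N^+(L_i)=\emptyset$; one may assume $X_i\mapsto I^+_i\cup R_i\cup X_{i+1}$; $S\cap X_i\ne\emptyset$ forces $X_i\subseteq S$; and no three consecutive $X_i$ miss $S$. These pin down, without loss of generality, $X_1\cup X_4\subseteq S$ and $(X_2\cup X_3)\cap S=\emptyset$, after which the argument splits only on whether $R_2\ne\emptyset$ or $R_2=\emptyset$, invoking Lemmas~\ref{arc-in-H-W-U} and~\ref{bip-x-y-font} respectively and building concrete matchings whose maximality checks are then routine. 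In short, the lemmas you list in your last sentence are the engine of the argument, not a fallback for a hard subcase; the two-round deletion scheme should be replaced by those reductions.
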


\begin{proof}

Let $Q:=Q[X_1,X_2, \ldots, X_k]$ be the odd extended cycle of length at least five corresponding to $D[V_2]$. Let $S$ be a maximum stable set of $D$. By hypothesis, $N^+(N_2)=\emptyset$ and $V_1 = \emptyset$. By Lemma~\ref{arc-in-F-prop}\rm{(i)} and \rm{(vii)}, it follows that $W_i$ and $L_i \cup I_i \cup R_i$ are stable. Next, we prove some claims. \newline

\textbf{Claim 1.} We may assume that $N^+(L_i)=\emptyset$ for all $i \in \{1,\ldots,k\}$. 

Assume that there exists $i \in \{1,2,\ldots,k\}$ such that $N^+(L_i) \neq \emptyset$. By Lemma~\ref{arc-in-F-prop}\rm{(viii)}, $N^+(L_i) \subseteq W_i$ and by Lemma~\ref{arc-in-F-prop}\rm{(ix)} it follows that $N^-(W_i) \subseteq L_i \cup I_i \cup R_i$. Let $H:=H[X,Y]$ be a maximal connected bipartite subdigraph with edges between $L_i$ and $N^+(L_i)$. Assume that $X \subseteq L_i$ and $Y \subseteq N^+(L_i) \subseteq W_i$. Since $Y \subseteq W_i$, it follows by Lemma~\ref{arc-in-F-prop}\rm{(v)} that $X \Rightarrow Y$. By Lemma~\ref{arc-in-F-prop}\rm{(viii)}, the sets $L_i$, $I_i$ and $R_i$ are disjoint. Towards a contradiction, assume that there exists $v \in I_i \cup R_i$ such that $v$ dominates a vertex $u$ in $Y$. Let $x \in X$ and $y \in R_{i-1}$ be vertices such that $x \to u$ and $y \to x$. Since $v \to u$ and $D$ is arc-locally in-semicomplete, we have that $y$ and $v$ are adjacent, and by Lemma~\ref{arc-in-F-prop}\rm{(iv)} it follows that $y \to v$, a contradiction to fact that $v \notin L_i$. Since $H$ is maximal and connected, $Y \subseteq W_i$ and $N^-(W_i) \subseteq L_i \cup I_i \cup R_i$, it follows that $N^-(Y) = X \subseteq L_i$. Let $U=N^-(X)$. By Lemma~\ref{arc-in-F-prop}\rm{(viii)}, $U \subseteq R_{i-1} \cup X_i$. By Lemma~\ref{arc-in-F-prop}\rm{(iv)} and $V_2 \Rightarrow V_3$, it follows that $U \Rightarrow X$. By Lemma~\ref{lem_arc_1} applied to $U$ and $H$, $U \mapsto X$. Since $N^+(Y)=\emptyset$, $N(Y)=X$. Since $X$ and $Y$ are stable, $N(Y) = X$, $N(X)= U \cup Y$ and every vertex in $U$ is adjacent to every vertex in $X$, it follows by Lemma~\ref{arc-in-par-W-B-U} applied to $U$, $X$ and $Y$ that $D$ has an $S_{BE}$-path partition. So we may assume that $N^+(L_i) = \emptyset$ for all $i \in \{1,2,\ldots,k\}$. This ends the proof of Claim 1.  \newline

From now on, let $I^+_i= N^-(W_i) \cap I_i$ for all $i \in \{1,2,\ldots,k\}$. The Figure~\ref{fig-proof} illustrates the structure of $D$ applying Claim 1 and Lemma~\ref{arc-in-F-prop}. \newline
\begin{figure}[ht]
\centering
    \tikzset{middlearrow/.style={
	decoration={markings,
		mark= at position 0.6 with {\arrow{#1}},
	},
	postaction={decorate}
}}

\tikzset{shortdigon/.style={
	decoration={markings,
		mark= at position 0.45 with {\arrow[inner sep=10pt]{<}},
		mark= at position 0.75 with {\arrow[inner sep=10pt]{>}},
	},
	postaction={decorate}
}}

\tikzset{digon/.style={
	decoration={markings,
		mark= at position 0.4 with {\arrow[inner sep=10pt]{<}},
		mark= at position 0.6 with {\arrow[inner sep=10pt]{>}},
	},
	postaction={decorate}
}}

\def \n {5}
\def \radius {3cm}
\def \margin {8}

\begin{tikzpicture}[scale = 0.66]

    \draw (1,5) circle (20pt);
    \node (x1)  at (1,5)  {};  
	\node (label_x1)  at (1,4.0) {$X_1$};

	\draw (6,5) circle (20pt); 
	\node (x2)  at (6,5)  {};  
	\node (label_x2)  at (6,4.0) {$X_2$};

	\draw (11,5) circle (20pt);
	\node (x3)  at (11,5)  {};
	\node (label_x3)  at (11,4.0) {$X_3$};

     \draw (0,7) circle (10pt);
     \node (l1)  at (0,7)  {};
     \node (label_l1)  at (0.4,7.4) {\footnotesize{$L_1$}}; 
	 \draw (1,7) circle (10pt);
     \node (i1)  at (1,7)  {};
     \node (label_i1)  at (1.4,7.4) {\footnotesize{$I_1$}}; 	
	 \draw (2,7) circle (10pt);
     \node (r1)  at (2,7)  {};
      \node (label_r1)  at (2.4,7.4) {\footnotesize{$R_1$}};

     \draw (1,9) ellipse (1.6 and 0.6);
     \node (w11)  at (0,9)  {};
     \node (w12)  at (1,9)  {};
     \node (label_w1)  at (1,9) {$W_1$};
     \node (w13)  at (2,9)  {};

     \draw (5,7) circle (10pt);
     \node (l2)  at (5,7)  {};
     \node (label_l2)  at (5.4,7.4) {\footnotesize{$L_2$}}; 
	 \draw (6,7) circle (10pt);
	 \node (i2)  at (6,7)  {};
     \node (label_i2)  at (6.4,7.4) {\footnotesize{$I_2$}}; 
	 \draw (7,7) circle (10pt);
	 \node (r2)  at (7,7)  {};
     \node (label_r2)  at (7.4,7.4) {\footnotesize{$R_2$}};

     \draw (6,9) ellipse (1.6 and 0.6);
     \node (w21)  at (5,9)  {};
	 \node (w22)  at (6,9)  {};
	 \node (label_w2)  at (6,9) {$W_2$};
	 \node (w23)  at (7,9)  {};

     \draw (10,7) circle (10pt);
	 \node (l3)  at (10,7)  {};     
     \node (label_l3)  at (10.4,7.4) {\footnotesize{$L_3$}}; 	
	 \draw (11,7) circle (10pt);
	 \node (i3)  at (11,7)  {};     
	 \node (label_i3)  at (11.4,7.4) {\footnotesize{$I_3$}}; 
	 \draw (12,7) circle (10pt);
	 \node (r3)  at (12,7)  {};
	 \node (label_r3)  at (12.4,7.4) {\footnotesize{$R_3$}};

     \draw (11,9) ellipse (1.6 and 0.6);
     \node (w31)  at (10,9)  {};
	 \node (w32)  at (11,9)  {};
	 \node (w32)  at (11,9)  {};
	 \node (label_w3)  at (11,9) {$W_3$};
	 \node (w33)  at (12,9)  {};



	

 \foreach \from/\to in {x1/x2,x2/x3,x1/l1,x1/i1,x1/r1,x2/l2,x2/i2,x2/r2,r1/l2,i1/w12,r1/w13,i2/w22,r2/w23,r2/l3,x3/l3,x3/i3,x3/r3,i3/w32,r3/w33}
 \draw[edge,middlearrow={>}] (\from) -- (\to);

\end{tikzpicture}
\caption{\centering By Claim 1 and Lemma~\ref{arc-in-F-prop} $D$ has this structure: the sets $L_i$, $I_i$, $R_i$, $W_i$ and $X_i$ are stable, $L_i \cap I_i = \emptyset$, $I_i \cap R_i =\emptyset$, $L_i \cap R_i = \emptyset$, $N^-(W_i) \subseteq I^+_i \cup R_i$, $N^-(L_i) \subseteq R_{i-1} \cup X_i$, $N^-(I_i \cup R_i) = X_2$, $N^+(L_i)=\emptyset$, $N^+(I^+_i)\subseteq W_i$, $N^+(I_i - I^+_i)=\emptyset$ and $N^+(R_i) \subseteq W_i \cup L_{i+1}$.}
\label{fig-proof}
\end{figure}

\textbf{Claim 2.} We may assume that $X_i \mapsto I^+_i \cup R_i \cup X_{i+1}$ for all $i \in \{1,2,\ldots,k\}$. 

Let $i$ in $\{1,2,\ldots,k\}$. Since $V(Q) \Rightarrow V_3$ and $Q$ is an extended cycle, it follows that $X_i \Rightarrow I^+_i \cup R_i$ and $X_i \mapsto X_{i+1}$. By Lemma~\ref{arc-in-F-prop}\rm{(viii)}, $X_i \mapsto R_i$. So it remains to show that $X_i \mapsto I^+_i$. Since $V_1=\emptyset$, it follows by Lemma~\ref{arc-in-F-prop}\rm{(ix)} and Claim 1 that $N^-(W_i) \subseteq I_i \cup R_i$. Let $H:=H[X,Y]$ be a maximal connected bipartite subdigraph with edges between $I^+_i \cup R_i$ and $W_i$. Assume that $X \subseteq I^+_i \cup R_i$ and $Y \subseteq W_i$. Let $U=N^-(X)$. Since $Y \subseteq W_i$, it follows by Lemma~\ref{arc-in-F-prop}\rm{(v)} that $X \Rightarrow Y$. Since $V(Q) \Rightarrow V_3$ and $X \Rightarrow Y$, it follows by Lemma~\ref{lem_arc_1} applied to $U$ and $H$ that $U \mapsto X$. Since $H$ is maximal and connected, if $X \subseteq I^+_i$, then $N(X)= U \cup Y$ and $N(Y)=X$, and hence, it follows by Lemma~\ref{arc-in-par-W-B-U} applied to $U$, $X$ and $Y$ that $D$ has an $S_{BE}$-path partition. Thus, we may assume that $X \subseteq I^+_i \cup R_i$ and $X \not\subset I^+_i$. Since $X_i \mapsto R_i$, it follows that $U=X_i$, and hence, $X_i \mapsto X$. Since $H$ is arbitrary, it follows that $X_i \mapsto I^+_i$. So we may assume that $X_i \mapsto I^+_i \cup R_i \cup X_{i+1}$ for all $i \in \{1,2,\ldots,k\}$. This ends the proof of Claim 2. \newline

\textbf{Claim 3.} We may assume that if $S \cap X_i \neq \emptyset$, then $X_i \subseteq S$ for all $i \in \{1,2,\ldots,k\}$. 

Assume that there exists $i \in \{1,2,\ldots,k\}$ such that $X_i \cap S \neq \emptyset$ and $X_i \not\subseteq S$. Without loss of generality, assume that $i=2$. By Claim 2,  $X_2 \mapsto I^+_2 \cup R_2 \cup X_3$. Since $X_1 \mapsto X_2$, it follows that $(X_1 \cup I^+_2 \cup R_2 \cup X_3 ) \cap S = \emptyset$. Let $S_1 = S \cap (L_2 \cup (I_2-I^+_2))$ and let $S_2 = S \cap W_1$. Since $X_2-S \neq \emptyset$ and $S$ is a maximum stable set, $S_1$ must be non-empty. By Claim 1, $N^+(L_1)=N^+(L_2)=\emptyset$. By hypothesis, $N^+(W_1 \cup W_2)=\emptyset$ and $V_1 = \emptyset$.  By Lemma~\ref{arc-in-F-prop}\rm{(ix)}, $N^-(W_1) \subseteq I_1 \cup R_1$. By definition of $I^+_2$ and by Lemma~\ref{arc-in-F-prop}\rm{(viii)}, we have that $N(I_2-I^+_2) \subseteq X_2$ and $N^-(L_2) \subseteq R_1 \cup X_2$. Thus, $N(S_1 \cup S_2) \subseteq I_1 \cup R_1 \cup X_2$. Since $S$ is maximum and $(X_1 \cup X_3 \cup I^+_2 \cup R_2) \cap S = \emptyset$, we have $\vert S_1 \cup S_2\vert  \geq \vert N(S_1 \cup S_2)\vert $. By Lemma~\ref{stable_set_S_menor_vizinhanca} applied to $S_1 \cup S_2$ it follows that $D$ satisfies the BE-property. So we may assume that if $S \cap X_i \neq \emptyset$, then $X_i \subseteq S$ for all $i \in \{1,2,\ldots,k\}$. This ends the proof of Claim 3. \newline

\textbf{Claim 4.} We may assume that there exists no $i \in \{1,2,\ldots,k\}$ such that $(X_i \cup X_{i+1} \cup X_{i+2}) \cap S = \emptyset$.

Without loss of generality, assume that $i=1$. Since $X_1 \mapsto X_2 \mapsto X_3$ and $S$ is maximum, it follows that $(L_2 \cup I_2 \cup R_2) \cap S \neq \emptyset$. Let $S_1 = S \cap (L_2 \cup I_2 \cup R_2)$ and let $S_2 = S \cap W_1$. By Claim 1, $N^+(L_1)=N^+(L_2)=N^+(L_3)= \emptyset$. By hypothesis, $N^+(W_1 \cup W_2)=\emptyset$ and $V_1 = \emptyset$. By Lemma~\ref{arc-in-F-prop}\rm{(ix)}, $N^-(W_1) \subseteq I_1 \cup R_1$ and $N^-(W_2) \subseteq I_2 \cup R_2$. By Lemma~\ref{arc-in-F-prop}\rm{(viii)}, $N(L_2 \cup I_2 \cup R_2) \subseteq R_1 \cup X_2 \cup W_2 \cup L_3$ and $N(I_1 \cup R_1) \subseteq X_1 \cup W_1 \cup L_2$. Thus $N(S_1 \cup S_2) \subseteq I_1 \cup R_1 \cup X_2 \cup W_2 \cup L_3$. Since $S$ is maximum and $(X_1 \cup X_2 \cup X_3) \cap S = \emptyset$, we have $\vert S_1 \cup S_2\vert  \geq \vert N(S_1 \cup S_2)\vert $, and hence, by Lemma~\ref{stable_set_S_menor_vizinhanca} applied to $S_1 \cup S_2$ it follows that $D$ satisfies the BE-property. So we may assume that there exists no $i \in \{1,2,\ldots,k\}$ such that $(X_i \cup X_{i+1} \cup X_{i+2}) \cap S = \emptyset$. This ends the proof of Claim 4. \newline

Since $Q$ is odd, there exists $i \in \{1,\ldots,k\}$ such that $(X_{i} \cup X_{i+1}) \cap S = \emptyset$. Without loss of generality, assume that $(X_2 \cup X_3) \cap S = \emptyset$. By Claim 3 and 4, it follows that $X_1 \cup X_4 \subseteq S$. By Claim 1, $N^+(L_2)=\emptyset$. Since $X_1 \subseteq S$, $(X_2 \cup X_3) \cap S = \emptyset$, we conclude that $(L_1 \cup I_1 \cup R_1) \cap S = \emptyset$ and $W_1 \cup L_2 \subseteq S$. The rest of the proof is divided into two cases, depending on whether $R_2 \neq \emptyset$ or $R_2=\emptyset$.  \newline

\textbf{Case 1.} $R_2 \neq \emptyset$. First, assume that $(I^+_2 \cup R_2) \cap S  \neq \emptyset$. Let $H:=H[X,Y]$ be a maximal connected bipartite subdigraph with edges between $(I^+_2 \cup R_2) \cap S$ and $W_2 \cup L_3$. Assume that $Y \subseteq (I^+_2 \cup R_2) \cap S$ and $X \subseteq W_2 \cup L_3$. By hypothesis and by Claim 1, we have $N^+(W_2 \cup L_3)= \emptyset$. By Lemma~\ref{arc-in-F-prop}\rm{(viii)} and \rm{(ix)}, it follows that $N(X) \subseteq I^+_2 \cup R_2 \cup X_3$ and $N(Y) \subseteq X \cup X_2$. Note that $N(X) \cap N(Y) = \emptyset$. Since $X_3 \cap S = \emptyset$ and $H$ is maximal and connected, $N(X) \cap S = Y$. By Claim 2, $X_2 \mapsto (I^+_2 \cup R_2 \cup X_3)$, and hence, every vertex in $N(Y)-X$ is adjacent to every vertex in $N(X)$. Thus by Lemma~\ref{arc-in-H-W-U} applied to $H$ it follows that $D$ has an $S_{BE}$-path partition.

So we may assume that $(I^+_2 \cup R_2) \cap S = \emptyset$. Since $(X_2 \cup X_3) \cap S = \emptyset$, it follows that $W_2 \cup L_3 \subseteq S$, $I_2-I^+_2 \subseteq S$ and $I_3-I^+_3 \subseteq S$. Now, let $X:=W_2 \cup L_3 \cup (I_3-I^+_3)$ and let $Y=N(X)$. Note that $X \subseteq S$ and $Y \cap S = \emptyset$. By Lemma~\ref{arc-in-F-prop}\rm{(viii)} and \rm{(ix)}, $Y \subseteq I^+_2 \cup R_2 \cup X_3$. Let $H=D[X \cup Y]$ be a bipartite subdigraph of $D$. Note that $X,Y$ is a bipartition of $H$. Since $X \subseteq S$ and $Y=N(X)$, by Lemma~\ref{arc-in-n-cobre-S} we may assume that there exists a matching between $X$ and $Y$ covering $X$. We show next that $M$ covers $I^+_2 \cup R_2$. By Lemma~\ref{arc-in-F-prop}\rm{(viii)}, $N^+(I^+_2 \cup R_2) = W_2 \cup L_3$. Thus by Lemma~\ref{hall-x-y-emp} applied to $U(H)$ there exists a matching $M$ between $X$ and $Y$ covering $X$ such that the restriction of $M$ on $U(H[I^+_2 \cup R_2 \cup W_2 \cup L_3])$ is a maximum matching. Since $(X_2 \cup I^+_2 \cup R_2) \cap S = \emptyset$, $N(I^+_2 \cup R_2) \cap S = W_2 \cup L_3$. Thus by Lemma~\ref{arc-in-matching-N(S)} there exists a matching between $I^+_2 \cup R_2$ and $W_2 \cup L_3$ covering $I^+_2 \cup R_2$, and this implies that $M$ covers $I^+_2 \cup R_2$.

Let $D'=D-V(M)$ and let $S'=S-X$. Since $M$ covers $X$ and $I^+_2 \cup R_2$, we have $V(D') \cap (X \cup I^+_2 \cup R_2) = \emptyset$. Assume that $S'$ is not a maximum stable set in $D'$ and let $Z$ be a maximum stable set in $D'$. So $\vert Z\vert >\vert S'\vert =\vert S\vert -\vert X\vert =\vert S\vert -\vert V(M) \cap Y\vert $. By Claim 2, $X_3 \mapsto I^+_3 \cup R_3 \cup X_4$. Since $X_2 \mapsto X_3$, if $Z \cap (X_2 \cup I^+_3 \cup R_3 \cup X_4) \neq \emptyset$, then $X_3 \cap Z = \emptyset$. Since $(I^+_2 \cup R_2) \cap V(D') = \emptyset$, $Z \cup X$ is a stable set larger than $S$ in $D$, a contradiction. So we may assume that $Z \cap (X_2 \cup I^+_3 \cup R_3 \cup X_4) = \emptyset$. Thus $Z \cup (V(M) \cap Y)$ is a stable set larger than $S$ in $D$, a contradiction. Therefore, $S'$ is a maximum stable set in $D'$. Let $\sP_{M}$ be the set of paths in $D$ corresponding to the edges of $M$. By hypothesis, $D'$ is DE-diperfect. Let $\sP'$ be an $S'_{BE}$-path partition of $D'$. Thus the collection $\sP' \cup \sP_M$ is an $S_{BE}$-path partition of $D$.  \newline

\textbf{Case 2.} $R_2 = \emptyset$. First, we prove that $W_2 = \emptyset$. By Claim 2, $X_2 \mapsto I^+_2 \cup X_3$. By Claim 1, $N^+(L_2)=\emptyset$. Suppose that $W_2 \neq \emptyset$. Let $H:=H[X,Y]$ be a maximal connected bipartite subdigraph with edges between $I^+_2$ and $W_2$. Assume that $X \subseteq I^+_2$ and $Y \subseteq W_2$. Since $R_2 = \emptyset$ and $H$ is maximal and connected, we conclude that $N(Y)=X$ and $N(X)= X_2 \cup Y$. Since $X_2 \mapsto X$, it follows by Lemma~\ref{arc-in-par-W-B-U} applied to $X_2$, $X$ and $Y$ that $D$ has an $S_{BE}$-path partition. So we may assume that $W_2 = \emptyset$. Since $X_1 \subseteq S$ and $(X_2 \cup X_3) \cap S = \emptyset$, it follows that $X_1 \cup W_1 \cup L_2 \cup I_2 \subseteq S$.

Let $X:=W_1 \cup L_2 \cup I_2 \cup X_3$ and let $Y=N^-(X)$. Note that $X \neq \emptyset$ because $X_3 \neq \emptyset$. By Lemma~\ref{arc-in-F-prop}\rm{(viii)} and \rm{(ix)}, it follows that $Y=I^+_1 \cup R_1 \cup X_2$ and $Y \Rightarrow X$. Let $H=D[X \cup Y]$ be a bipartite subdigraph of $D$. Note that $X,Y$ is a bipartition of $H$. Since $N^+(W_1)= \emptyset$ and $W_2= \emptyset$, we conclude that $N^+(X \cap S)=\emptyset$. By Claim 2, $X_1 \mapsto I^+_1 \cup R_1 \cup X_2$. Since $X_1 \subseteq S$, $Y \cap S = \emptyset$. Thus by Lemma~\ref{bip-x-y-font} applied to $H$ we may assume that there exists a matching $M$ between $X$ and $Y$ covering $X$.

Let $D'=D-X_3$. Since $X_3 \cap S= \emptyset$, it follows that $S$ is maximum in $D'$. By hypothesis, $D'$ is BE-diperfect. Let $\sP'$ be an $S_{BE}$-path partition of $D'$. Let $\sP_M$ be the set of paths corresponding to the edges of $M \cap E(D')$. If $\sP_M= \emptyset$, then $W_1 \cup I^+_1 \cup R_1 \cup L_2 \cup I_2 = \emptyset$. Since $X_2 \cap S = \emptyset$, every vertex in $X_2$ ends a path in $\sP'$. Since $X_2 \mapsto X_3$ and $M$ covers $X_3$, it is easy to see that using the edges of $M$, we can add the vertices of $X_3$ to paths in $P'$ that ends at some vertex in $X_2$, obtaining an $S_{BE}$-path partition of $D$. So we may assume that $\sP_M \neq \emptyset$. Let $\sP_Y$ be the set of paths of $\sP'$ such that $V(P) \cap Y \neq \emptyset$ for all $P \in \sP_Y$. Since $X_1 \subseteq S$, $(X-X_3) \subseteq S$ and $Y$ is a stable set, it follows that every path in $\sP_Y$ has length one. Moreover, every $P \in \sP_Y$ starts at some vertex of $X_1$ or ends at some vertex of $X-X_3$. Let $\sP^* = (\sP'-\sP_Y) \cup \sP_M$. Note that every vertex of $X-X_3$ is an end of some path in $\sP^*$. Also, note that there might be some vertex of $Y$ which does not belong to any path in $\sP^*$. Since $\sP'$ is an $S_{BE}$-path partition of $D'$, every vertex in $Y$ belongs to some path of $\sP'$ and since every vertex in $X-X_3$ belongs to some path in $\sP^*$, there are at least $\vert Y-V(M)\vert $ vertices in $X_1$ that do not belong to any path of $\sP^*$. Since $X_1 \mapsto I^+_1 \cup R_1 \cup X_2$, we can add to $\sP^*$ the path $u \to v$ where $v \in Y-V(M)$ and $u$ is a vertex in $X_1$ that does not belong to any path of $\sP$. Since $M$ covers $X_3$, there are at least $\vert X_3\vert $ paths in $\sP^*$ that end in vertices of $X_2$. So it easy to see that using the edges of $M$, we can add the vertices of $X_3$ to paths in $\sP^*$ that ends at some vertex in $X_2$, obtaining an $S_{BE}$-path partition of $D$. This finishes the proof.
\end{proof}

\begin{lemma}
\label{arc-in-case-d=3}
Let $D$ be an arc-locally in-semicomplete digraph such that every proper induced subdigraph of $D$ satisfies the BE-property. Let $(V_1,V_2,V_3)$ be a partition of $V(D)$ as described in Theorem~\ref{lem-arc-resultado}\rm{(ii)}. If $N_d \neq \emptyset$ for some $d \geq 3$ and $V_1=\emptyset$, then $D$ satisfies the BE-property. 
\end{lemma}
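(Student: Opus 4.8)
The plan is to avoid any case analysis on the maximum stable set and instead reduce directly to Lemma~\ref{arc-in-par-W-B-U}, using arc-locality at the very bottom of the ``distance filtration'' $N_0=V_2,N_1,N_2,\dots$ Let $m$ be the largest index with $N_m\neq\emptyset$; by hypothesis $m\geq 3$, so $N_1,\dots,N_m$ are nonempty and $N_{m+1}=\emptyset$. Since $V_1=\emptyset$, Lemma~\ref{arc-in-F-prop}(i),(v) give that $N_d$ is stable and $N^-(N_d)\subseteq N_{d-1}$ for every $d\geq 2$; moreover $N^+(N_m)=\emptyset$ and $N^+(N_{m-1})\subseteq N_m$ (an out-neighbour of a vertex of $N_{m-1}$, resp.\ $N_m$, cannot lie in $V_2$ because $V_2\Rightarrow V_3$, and by (v) it must lie in $N_m$, resp.\ $N_{m+1}=\emptyset$). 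Put $Y:=N^-(N_m)$. Then $\emptyset\neq Y\subseteq N_{m-1}$, $Y$ is stable, and $D[Y\cup N_m]$ is an induced bipartite digraph all of whose edges run from $Y$ to $N_m$.

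Pick a connected component $H^*=D[Y^*\cup N^*]$ of $D[Y\cup N_m]$, with $\emptyset\neq Y^*\subseteq Y$ and $\emptyset\neq N^*\subseteq N_m$ (every component is nonempty on both sides and contains an edge, since each vertex of $Y$ dominates a vertex of $N_m$ and each vertex of $N_m$ has an in-neighbour in $Y$). Because $H^*$ is a component and $N^+(N_m)=\emptyset$, we get $N^-(N^*)=Y^*$ and $N^+(Y^*)\subseteq N^*$ — in particular $N(N^*)=Y^*$ and $N(Y^*)=N^-(Y^*)\cup N^+(Y^*)$ with $N^+(Y^*)\subseteq N^*$. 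Set $U^*:=N^-(Y^*)$; by (v) $U^*\subseteq N_{m-2}$, and $U^*\neq\emptyset$ since every vertex of $Y^*\subseteq N_{m-1}$ (being at distance $m-1\geq 2$ from $V_2$) has an in-neighbour in $N_{m-2}$. Thus $U^*,Y^*,N^*$ lie in the distinct layers $N_{m-2},N_{m-1},N_m$ and are pairwise disjoint and nonempty. Now apply Lemma~\ref{lem_arc_1} to the connected bipartite digraph $H^*$, with $Y^*$ in the role of ``$X$'' and $N^*$ in the role of ``$Y$'' (the hypothesis ``$X\Rightarrow Y$'' holds since $N^+(N_m)=\emptyset$): every $v\in U^*$ lies outside $V(H^*)$, dominates some vertex of $Y^*$, and satisfies $v\Rightarrow Y^*$ (as $N^+(Y^*)\subseteq N_m$ while $v\in N_{m-2}$), so $v\mapsto Y^*$. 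Hence every vertex of $U^*$ is adjacent to every vertex of $Y^*$. The triple $(U,X,Y):=(U^*,Y^*,N^*)$ now meets all the hypotheses of Lemma~\ref{arc-in-par-W-B-U}: $Y^*$ and $N^*$ are stable, $N(N^*)=Y^*\subseteq X$, $N(Y^*)\subseteq U^*\cup N^*=U\cup Y$, and every vertex of $U^*$ is adjacent to every vertex of $Y^*$. Therefore $D$ satisfies the BE-property.

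The essentially only thing to be careful about is the ``no leakage'' bookkeeping: that $N^-(N^*)$, $N^+(Y^*)$ and $N^-(Y^*)$ stay inside the three layers $N_{m-2},N_{m-1},N_m$ and split exactly as claimed between $U^*,Y^*,N^*$. This follows at once from Lemma~\ref{arc-in-F-prop}(v), from the choice of $m$ as maximal (so that $N^+(N_m)=\emptyset$), and from $H^*$ being a connected component of $D[Y\cup N_m]$. Unlike Lemma~\ref{arc-in-case-d<=2}, no analysis of a fixed maximum stable set is needed, since Lemma~\ref{arc-in-par-W-B-U} already produces an $S_{BE}$-path partition for an arbitrary maximum stable set $S$.
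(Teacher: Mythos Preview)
Your proof is correct and follows essentially the same route as the paper: pick the deepest nonempty layer $N_m$, take a connected component of the bipartite digraph between $N_{m-1}$ and $N_m$, use Lemma~\ref{lem_arc_1} to show its in-neighbourhood in $N_{m-2}$ dominates the $N_{m-1}$-side completely, and feed the resulting triple into Lemma~\ref{arc-in-par-W-B-U}. The paper phrases the middle object as a ``maximal connected bipartite subdigraph with edges between $N_{d-1}$ and $N_d$'' rather than first passing to $Y=N^-(N_m)$ and then taking a component, but these produce the same object and the rest of the argument is identical.
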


\begin{proof}
Let $N_d \neq \emptyset$ such that $d$ is maximum. Note that $d \geq 3$. By Lemma~\ref{arc-in-F-prop}\rm{(i)}, the sets $N_{d}$ and $N_{d-1}$ are stable. Since $V_1 = \emptyset$, it follows by Lemma~\ref{arc-in-F-prop}\rm{(v)} that $N^-(N_d) \subseteq N_{d-1}$, $N^-(N_{d-1}) \subseteq N_{d-2}$ and $N^-(N_{d-2}) \subseteq N_{d-3}$, this implies that $N_{d-2} \Rightarrow N_{d-1}$ and $N_{d-1} \Rightarrow N_d$. Let $H:=H[X,Y]$ be a maximal connected bipartite subdigraph with edges between $N_{d-1}$ and $N_d$. Assume that $X \subseteq N_{d-1}$ and $Y \subseteq N_d$. Let $U=N^-(X)$. Since $U \Rightarrow X$ and $X \Rightarrow Y$, it follows by Lemma~\ref{lem_arc_1} applied to $U$ and $H$ that $U \mapsto X$. By the choice of $d$, $N^+(Y)=\emptyset$. Since $H$ is maximal and connected, we conclude that $N(Y)=X$ and $N(X)= U \cup Y$. Since $U \mapsto X$, it follows by Lemma~\ref{arc-in-par-W-B-U} applied to $U$, $X$ and $Y$ that $D$ has an $S_{BE}$-path partition. 
\end{proof}

Now, we ready for the main result of this section.

\begin{theorem}
\label{arc-in-BE}
Let $D$ be an arc-locally in-semicomplete digraph. If $D \in \D$, then $D$ is BE-diperfect.
\end{theorem}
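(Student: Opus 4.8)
The plan is to reduce the statement to the structural machinery already assembled in Sections \ref{struc-results} and \ref{arc-semi}, arguing by contradiction on a minimal counterexample. Suppose $D \in \D$ is not BE-diperfect, and choose such a $D$ with $\vert V(D)\vert$ minimum. Since $\D$ is closed under taking induced subdigraphs and the BE-property restricted to induced subdigraphs is exactly BE-diperfectness, every proper induced subdigraph of $D$ is BE-diperfect; in particular every proper induced subdigraph satisfies the BE-property, so all the lemmas with that hypothesis (Lemmas~\ref{arc-in-n-cobre-S}, \ref{stable_set_S_menor_vizinhanca}, \ref{arc-in-par-W-B-U}, \ref{arc-in-H-W-U}, \ref{bip-x-y-font}, \ref{arc-in-case-d<=2}, \ref{arc-in-case-d=3}) apply to $D$ itself. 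It remains to show $D$ does satisfy the BE-property, a contradiction.

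First I would dispose of the easy structural reductions. By Lemmas~\ref{samb_part} and \ref{samb_clique}, $D$ must be connected and have no clique cut (otherwise split along the cut and combine the $S_{BE}$-path partitions). By Lemma~\ref{diperD-BE}, if $D$ were diperfect then, since $D\in\D$, it would be BE-diperfect; so $D$ is not diperfect. Now apply Theorem~\ref{lem-arc-resultado}: option (i) is ruled out (not diperfect), option (iii) is ruled out (no clique cut), so $V(D)$ admits a partition $(V_1,V_2,V_3)$ as in Theorem~\ref{lem-arc-resultado}(ii), with $D[V_2]$ an odd extended cycle of length at least five. Since $D\in\D$, Lemma~\ref{lema-arc-V1-empty} gives $V_1=\emptyset$.

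Finally I would split on whether the layers $N_d$ (distance-$d$ classes from $V_2$) reach beyond distance $2$. If $N_d\neq\emptyset$ for some $d\geq 3$, then Lemma~\ref{arc-in-case-d=3} (applied with $V_1=\emptyset$) shows $D$ satisfies the BE-property. Otherwise $N_d=\emptyset$ for all $d\geq 3$, and Lemma~\ref{arc-in-case-d<=2} (again with $V_1=\emptyset$) shows $D$ satisfies the BE-property. Either way we contradict the choice of $D$, so no counterexample exists and every $D\in\D$ is BE-diperfect. One extra remark: if $V_3=\emptyset$ then $D=D[V_2]$ is an extended cycle, and Lemma~\ref{lema_circuito_estendido} already covers it (this case is also subsumed by Lemma~\ref{arc-in-case-d<=2}, so it needs no separate treatment).

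The main obstacle is not in this final assembly — which is essentially bookkeeping — but in verifying that the hypotheses of Lemmas~\ref{arc-in-case-d<=2} and \ref{arc-in-case-d=3} are genuinely met, i.e.\ that the reduction to ``$V_1=\emptyset$ and a clean two-case split on $\max d$'' is exhaustive. The only subtlety worth double-checking is the implicit claim that arc-local in-semicompleteness is inherited by induced subdigraphs (immediate from the definition, since the adjacency condition on in-neighbours is hereditary) and that $\D$-membership together with minimality legitimately forces the Theorem~\ref{lem-arc-resultado}(ii) case rather than leaving a gap when, say, $D$ is disconnected after removing a hypothetical clique cut — but this is exactly what Lemmas~\ref{samb_part}, \ref{samb_clique}, \ref{diperD-BE} were set up to handle, so the proof is short.
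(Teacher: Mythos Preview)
Your proof is correct and follows essentially the same route as the paper's: reduce via Lemmas~\ref{samb_part}, \ref{samb_clique}, \ref{diperD-BE} to the partition of Theorem~\ref{lem-arc-resultado}(ii), kill $V_1$ with Lemma~\ref{lema-arc-V1-empty}, then invoke Lemmas~\ref{lema_circuito_estendido}, \ref{arc-in-case-d<=2}, \ref{arc-in-case-d=3}. The only cosmetic difference is that you phrase it as a minimal-counterexample argument (making the hypothesis ``every proper induced subdigraph satisfies the BE-property'' explicit), whereas the paper leaves the induction implicit; your parenthetical claim that the $V_3=\emptyset$ case is subsumed by Lemma~\ref{arc-in-case-d<=2} is plausible but unnecessary since you already cite Lemma~\ref{lema_circuito_estendido} for it.
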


\begin{proof}
Since every induced subdigraph of $D$ is also an arc-locally in-semicomplete digraph, it suffices to show that $D$ satisfies the BE-property. If $D$ is diperfect or $D$ has a clique cut, then the result follows by Lemmas~\ref{diperD-BE}, \ref{samb_part} and \ref{samb_clique}. So we may assume that $V(D)$ can be partitioned into $(V_1, V_2, V_3)$ as described in Theorem~\ref{lem-arc-resultado}\rm{(ii)}. By Lemma~\ref{lema-arc-V1-empty}, $V_1 = \emptyset$. If $V_3 = \emptyset$, then the result follows by Lemma~\ref{lema_circuito_estendido}. Then, by Lemmas~\ref{arc-in-case-d<=2} and \ref{arc-in-case-d=3} it follows that $D$ satisfies the BE-property. This finishes the proof.
\end{proof}

Let $D$ be an arc-locally in-semicomplete digraph and let $H$ be the inverse of $D$. Since $D$ satisfies the BE-property if and only if $H$ satisfies the BE-property, we have the following result.

\begin{theorem}
\label{arc-out-BE}
Let $D$ be an arc-locally out-semicomplete digraph. If $D \in \D$, then $D$ is BE-diperfect.
\qed
\end{theorem}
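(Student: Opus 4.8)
The statement to be proved is Theorem~\ref{arc-out-BE}: every arc-locally out-semicomplete digraph $D$ with $D\in\D$ is BE-diperfect. The plan is to reduce this immediately to the corresponding statement for arc-locally in-semicomplete digraphs, which is Theorem~\ref{arc-in-BE}, by passing to the inverse digraph.

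First I would recall the two facts already established in the excerpt. The inverse $H$ of an arc-locally out-semicomplete digraph is arc-locally in-semicomplete (noted right after the definition of these classes in Section~\ref{arc-semi}): if $uv\in E(H)$ then $vu\in E(D)$, and the out-neighbour condition in $D$ translates verbatim into the in-neighbour condition in $H$. Second, reversing every path of an $S_{BE}$-path partition of $D$ yields an $S_{BE}$-path partition of $H$ (orthogonality with $S$ is unaffected, and the start/end of a path simply swap roles), so $D$ satisfies the BE-property if and only if $H$ does; since this holds for every induced subdigraph simultaneously (inverting commutes with taking induced subdigraphs), $D$ is BE-diperfect if and only if $H$ is BE-diperfect. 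Both of these observations are stated in the paragraph preceding Lemma~\ref{samb_part} and just before the theorem.

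The argument then runs as follows. Let $D$ be arc-locally out-semicomplete with $D\in\D$, and let $H$ be its inverse. Then $H$ is arc-locally in-semicomplete. Moreover $H\in\D$: a blocking odd cycle $C$ has underlying cycle $U(C)=x_1x_2\cdots x_{2k+1}x_1$ with $x_1$ a source and $x_2$ a sink; inverting turns a source into a sink and vice versa, so an induced blocking odd cycle in $H$ would give, upon inverting back, an induced subdigraph of $D$ whose underlying graph is an odd cycle with one source and one sink — again a blocking odd cycle — contradicting $D\in\D$. Hence by Theorem~\ref{arc-in-BE} the digraph $H$ is BE-diperfect, and therefore $D$ is BE-diperfect by the inverse-duality of the BE-property noted above.

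There is essentially no obstacle here: the content of the theorem is entirely carried by Theorem~\ref{arc-in-BE}, and the remaining work is the bookkeeping that the class of arc-locally out-semicomplete digraphs, the class $\D$, and the BE-property are each preserved (or exchanged appropriately) under taking inverses. The only point that deserves an explicit sentence rather than being left to the reader is the closure of $\D$ under inversion, since $\D$ is defined by forbidding blocking odd cycles and the source/sink roles are swapped by inversion — but as observed above a blocking odd cycle is self-inverse as a configuration (source $\leftrightarrow$ sink), so $\D$ is in fact closed under inversion. This is exactly the content of the one-line proof the authors give ("Since $D$ satisfies the BE-property if and only if $H$ satisfies the BE-property, we have the following result"), and I would present it at that level of brevity, perhaps expanding the $\D$-closure remark if a referee asked for it.
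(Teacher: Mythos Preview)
Your proposal is correct and follows essentially the same approach as the paper: reduce to Theorem~\ref{arc-in-BE} by passing to the inverse digraph, using that inversion interchanges the arc-locally in- and out-semicomplete classes and preserves the BE-property. Your explicit check that $\D$ is closed under inversion (a blocking odd cycle inverts to a blocking odd cycle) is a detail the paper leaves implicit but is indeed needed for the argument to be complete.
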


\section{Berge's conjecture}
\label{berge-alis}

In this section we prove that Conjecture~\ref{conj_berge} holds for arc-locally (out) in-semicomplete digraphs. Recall that we denote by $\sB$ the set of all digraphs containing no induced anti-directed odd cycle. 

First we present an outline of the main proof. Let $D$ be an arc-locally in-semicomplete digraph. Since every induced subdigraph of $D$ is also an arc-locally in-semicomplete digraph, it is suffices to show that $D$ satisfies the $\alpha$-property. By Theorem~\ref{lem-arc-resultado}\rm{(ii)}, $V(D)$ admits a partition $(V_1,V_2,V_3)$ as described in the statement. First, we show that if $V_1=\emptyset$, then $D$ satisfies the $\alpha$-property. Next, we show that an extended cycle satisfies the $\alpha$-property (it is analogous to the proof of Lemma~\ref{lema_circuito_estendido}). Finally, we show that if $V_1 \neq \emptyset$, then $D$ satisfies the $\alpha$-property. 

For the next two lemmas we need the following auxiliary lemma.

\begin{lemma}[Freitas and Lee, 2021]
\label{arc-in-circuito}
If $D$ is an arc-locally in-semicomplete digraph, then $D$ contains no induced non-oriented odd cycle of length at least five. 
\end{lemma}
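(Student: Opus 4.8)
The plan is to argue by contradiction: suppose $D$ is an arc-locally in-semicomplete digraph that contains an induced non-oriented odd cycle $C$ with $U(C) = x_1 x_2 \dots x_{2k+1} x_1$ for some $k \geq 2$. Because $C$ is \emph{not} a directed cycle but its underlying graph is a cycle, at least one vertex of $C$ is a \emph{source} in $C$ (both incident edges oriented outward) and at least one is a \emph{sink} in $C$ (both incident edges oriented inward); indeed, reading the orientations around $U(C)$, the number of ``direction changes'' is even and positive, so sources and sinks of $C$ alternate along the cycle and each type occurs at least once. The first step is to record this combinatorial fact and to fix a source $s = x_i$ of $C$, so that $x_{i-1} \leftarrow x_i \rightarrow x_{i+1}$ (indices mod $2k+1$).

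The heart of the argument is a local propagation using the arc-locally in-semicomplete property. Consider any vertex $x_j$ of $C$ that has an out-neighbor on $C$, say $x_j \to x_{j+1}$, and look at the in-neighborhood structure. The key observation is: if $u \to x_{j+1}$ and $v \to x_{j+1}$ are two distinct in-neighbors of a common vertex, they need not be adjacent — so instead I would exploit the property in the direction where it bites. Specifically, take an edge $x_j \to x_{j+1}$ of $C$; every in-neighbor of $x_j$ and every in-neighbor of $x_{j+1}$ must be adjacent or equal. Starting from a source $s=x_i$ of $C$ and a sink $x_m$ of $C$ that is ``closest'' to $s$ along the cycle, I would walk the orientations along the arc of $C$ between $s$ and $x_m$ and repeatedly apply the definition to force the existence of a chord of $U(C)$, contradicting the assumption that $C$ is induced. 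Concretely: near a sink $x_m$ we have $x_{m-1} \to x_m \leftarrow x_{m+1}$; the in-neighbors $x_{m-1}$ and $x_{m+1}$ of the common vertex... no — rather, I take the edge $x_{m-1}\to x_m$ and pair an in-neighbor of $x_{m-1}$ coming from the $s$-side with $x_{m+1}$ (an in-neighbor of $x_m$), deducing they are adjacent; iterating toward $s$ collapses the path and produces a short chord. The cleanest packaging is probably an induction on the length of the arc between a source and the next sink: the minimal such arc, together with the arc-local property applied at its sink end, yields a chord between two vertices at cyclic distance $2$ or $3$ on $U(C)$, which is forbidden since $C$ is induced and has length $\geq 5$.

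The main obstacle will be organizing the case analysis so that the arc-locally in-semicomplete hypothesis is always applied to an edge whose two endpoints genuinely have the relevant neighbors — one has to be careful that the ``previous'' vertex one wants to use really is an in-neighbor (not an out-neighbor) of the current vertex, which depends on where sources and sinks of $C$ sit. I expect the right bookkeeping is: choose a source $x_i$ and a sink $x_j$ of $C$ that are \emph{consecutive} among the sources/sinks (so the arc strictly between them is ``monotone'' in orientation), then push along that monotone arc. Monotonicity guarantees that at each step the needed adjacency can be invoked, and the telescoping forces $x_{i}$ and a vertex at cyclic distance $2$ or $3$ from it to be adjacent, contradicting that $C$ is an induced non-oriented cycle. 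Since this is stated as a lemma of Freitas and Lee~\cite{freitas2021}, I would also be content to simply cite it; but the self-contained argument above is the route I would take if a proof were required.
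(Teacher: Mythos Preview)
The paper does not prove this lemma: it is quoted verbatim from \cite{freitas2021} and used as a black box, so there is no ``paper's own proof'' to compare against. Your closing remark that you ``would also be content to simply cite it'' is in fact exactly what the present paper does.

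As for your self-contained sketch, the idea is sound but can be tightened considerably. The crucial observation you are circling is this: reading orientations around $U(C)$, sources and sinks of $C$ alternate, and the cycle decomposes into maximal monotone directed arcs whose endpoints are the sources and sinks. Because the number of such arcs is even while $|V(C)|=2k+1$ is odd, at least one monotone arc has length $\geq 2$. Take a sink $x_m$ at the end of such an arc, so that $x_{m-2}\to x_{m-1}\to x_m\leftarrow x_{m+1}$. Applying the arc-locally in-semicomplete property to the edge $x_{m-1}x_m$ forces $x_{m-2}$ (an in-neighbor of $x_{m-1}$) and $x_{m+1}$ (an in-neighbor of $x_m$) to be adjacent; since $2k+1\geq 5$ these are non-consecutive on $U(C)$, giving a chord and contradicting that $C$ is induced. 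No induction or ``iteration toward $s$'' is needed---one application suffices.

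Your write-up never states the parity step (odd length $\Rightarrow$ some monotone arc has length $\geq 2$), and without it the case where the chosen source and the next sink are adjacent cannot be handled, since the source then has no in-neighbor on $C$. That is the one genuine gap in the plan; once it is filled, the argument is a single paragraph rather than a case analysis.
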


\begin{lemma}
\label{lem_arc_in_berge_v1_empty}
Let $D$ be an arc-locally in-semicomplete digraph such that every proper induced subdigraph of $D$ satisfies the $\alpha$-property. Let $(V_1,V_2,V_3)$ be a partition of $V(D)$ as described in Theorem~\ref{lem-arc-resultado}\rm{(ii)}. If $V_1 = \emptyset$, then $D$ satisfies the $\alpha$-property.
\end{lemma}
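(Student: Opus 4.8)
The plan is to exploit Theorem~\ref{lem-arc-resultado}\rm{(ii)} together with the detailed structural information gathered in Lemma~\ref{arc-in-F-prop}, in a way that closely parallels the BE-proof (Lemmas~\ref{arc-in-case-d<=2} and \ref{arc-in-case-d=3}), but using the weaker $\alpha$-property machinery (Lemmas~\ref{arc-in-n-cobre-S-berge}, \ref{arc-in-matching-N(S)}, \ref{stable_set_S_menor_vizinhanca} in its $\alpha$-analogue form) instead of the BE-versions. Since $V_1=\emptyset$, Lemma~\ref{lema-arc-V1-empty}'s hypothesis is moot here, but the same conclusion we need is that $D[V_2]$ is an odd extended cycle $Q:=Q[X_1,\dots,X_k]$ with $k\geq 5$, $V_2\Rightarrow V_3$, and $D[V_3]$ bipartite. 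Let $S$ be an arbitrary maximum stable set of $D$; it suffices to produce an $S$-path partition. First I would split on whether $N_d\neq\emptyset$ for some $d\geq 3$: in that case I expect the argument of Lemma~\ref{arc-in-case-d=3} to go through almost verbatim — take the maximal connected bipartite subdigraph $H[X,Y]$ between $N_{d-1}$ and $N_d$ (with $d$ maximum), use Lemma~\ref{arc-in-F-prop}\rm{(v)} to get $N^-(X)\Rightarrow X$ and $X\Rightarrow Y$, apply Lemma~\ref{lem_arc_1} to get $U\mapsto X$ where $U=N^-(X)$, and then finish with an $\alpha$-version of Lemma~\ref{arc-in-par-W-B-U} (which must be stated or the BE-one invoked, noting that an $S_{BE}$-path partition is an $S$-path partition, so really one just needs the conclusion "$D$ has an $S$-path partition", which follows the same matching construction).

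For the main case $N_d=\emptyset$ for all $d\geq 3$, the plan is to mirror Lemma~\ref{arc-in-case-d<=2}. I would re-derive the analogues of Claims 1–4 there: Claim~1 ("we may assume $N^+(L_i)=\emptyset$") using Lemma~\ref{arc-in-F-prop}\rm{(viii)},\rm{(ix)} and the $\alpha$-version of Lemma~\ref{arc-in-par-W-B-U}; Claim~2 ("$X_i\mapsto I_i^+\cup R_i\cup X_{i+1}$"); Claim~3 ("$S\cap X_i\neq\emptyset\Rightarrow X_i\subseteq S$") which relies crucially on Lemma~\ref{stable_set_S_menor_vizinhanca} applied to a small stable set $S_1\cup S_2$ with $|N(S_1\cup S_2)|\leq|S_1\cup S_2|$ — and here the $\alpha$-analogue of that lemma is what we need; and Claim~4 ("no three consecutive $X_i$ miss $S$"). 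Then, since $Q$ is odd, two consecutive classes miss $S$, say $(X_2\cup X_3)\cap S=\emptyset$, forcing $X_1\cup X_4\subseteq S$ by Claims 3–4, and one splits on $R_2\neq\emptyset$ versus $R_2=\emptyset$ exactly as in the BE-proof, in each case building a matching $M$ saturating a suitable stable subset of $S$, deleting $V(M)$, verifying $S-V(M)$ stays maximum in $D-V(M)$, invoking the induction hypothesis for an $S'$-path partition, and patching with $\sP_M$. The payoff of working with the $\alpha$-property rather than BE is that there is no "begin-or-end" bookkeeping, so the delicate end-of-path arguments in Case~2 of Lemma~\ref{arc-in-case-d<=2} (the paragraph juggling $\sP_Y$, $\sP^*$, and counting uncovered vertices of $X_1$) simplify substantially — orthogonality alone suffices.

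**Where the difficulty lies.** The main obstacle is not the high-level case analysis, which is dictated by the structure theorem, but making sure the auxiliary matching lemmas are available in their $\alpha$-flavored forms. Specifically, Lemmas~\ref{arc-in-n-cobre-S}, \ref{stable_set_S_menor_vizinhanca}, \ref{arc-in-par-W-B-U}, \ref{arc-in-H-W-U}, \ref{bip-x-y-font} are all stated for the BE-property; for Berge's conjecture one needs the corresponding statements with "$\alpha$-property" and "$S$-path partition" replacing "BE-property" and "$S_{BE}$-path partition". The correct move (as the paper does for Lemma~\ref{arc-in-n-cobre-S-berge} and Theorem~\ref{arc-in-maior-S-be}) is to observe that every one of those proofs only ever used the weaker conclusion, so each has a routine $\alpha$-analogue; I would cite those analogues explicitly. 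A secondary subtlety is Claim~3's reliance on $|N(S_1\cup S_2)|\leq|S_1\cup S_2|$: one must check this inequality survives verbatim in the $\alpha$-setting — it does, since $S$ being a \emph{maximum} stable set is all that was used. Once these analogues are in hand, the proof is a faithful transcription of Lemmas~\ref{arc-in-case-d<=2} and \ref{arc-in-case-d=3}, and I would present it by stating the reduction to these two cases, citing Lemma~\ref{lema_circuito_estendido}'s $\alpha$-analogue for the $V_3=\emptyset$ subcase, and then deferring to those two lemmas' arguments with "BE" systematically replaced by "$\alpha$".
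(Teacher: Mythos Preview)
Your plan is correct in principle, but it takes a much longer road than the paper. The paper's proof is four lines: since $V_1=\emptyset$, Lemma~\ref{lema_arc-livre-triangulo} gives that $U(D)$ has no triangle; since a blocking odd cycle of length $\geq 5$ is a non-oriented odd cycle, Lemma~\ref{arc-in-circuito} rules those out too; hence $D\in\D$; now Theorem~\ref{arc-in-BE} applies to $D$ itself and gives the BE-property, which is stronger than the $\alpha$-property. No $\alpha$-analogues of the auxiliary lemmas are ever needed, and the inductive hypothesis in the statement is not even used.

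What you propose --- transcribing the entire machinery of Lemmas~\ref{arc-in-case-d<=2} and \ref{arc-in-case-d=3} with ``BE'' replaced by ``$\alpha$'' --- would succeed, since as you note every one of those proofs uses only orthogonality, and the relevant $\alpha$-versions of Lemmas~\ref{stable_set_S_menor_vizinhanca}, \ref{arc-in-par-W-B-U}, \ref{arc-in-H-W-U}, \ref{bip-x-y-font} hold by identical arguments. But this duplicates several pages of work. The key observation you missed is that the structural constraint $V_1=\emptyset$ is strong enough to force $D\in\D$ outright (triangle-free plus no induced non-oriented odd cycle), so the BE-diperfectness already established in Section~\ref{be-alis} can be invoked wholesale rather than re-proved.
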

 
\begin{proof}
Since $V_1=\emptyset$, it follows by Lemma~\ref{lema_arc-livre-triangulo} that $U(D)$ does not contain a cycle of length three. Note that a blocking odd cycle is a non-oriented odd cycle. So by Lemma~\ref{arc-in-circuito} $D$ contains no blocking odd cycle as an induced subdigraph, and hence, $D \in \D$. Since $D \in \D$, it follows by Theorem~\ref{arc-in-BE} that $D$ satisfies the BE-property, and hence, the $\alpha$-property.
\end{proof}

The next lemma states that if a digraph $D$ is an extended cycle, then $D$ satisfies the $\alpha$-property. We omit its proof since it is analogous to the proof of Lemma~\ref{lema_circuito_estendido}, but we use Lemma~\ref{diper-alpha} instead of Lemma~\ref{diperD-BE}.

\begin{lemma}
\label{lema_circuito_estendido-berge}
If a digraph $D$ is an extended cycle, then $D$ satisfies the $\alpha$-property.
\qed
\end{lemma}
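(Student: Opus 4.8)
The plan is to mimic the proof of Lemma~\ref{lema_circuito_estendido} almost verbatim, replacing the appeal to Lemma~\ref{diperD-BE} by an appeal to Lemma~\ref{diper-alpha}; the hypothesis $D\in\D$ used there was needed only to keep the base of the induction inside the BE-world and plays no role for the $\alpha$-property. So let $D:=D[X_1,\dots,X_k]$ be an extended cycle, with $X_1\mapsto X_2\mapsto\cdots\mapsto X_k\mapsto X_1$, and let $S$ be a maximum stable set of $D$; it suffices to produce an $S$-path partition of $D$. If $k$ is even then $U(D)$ is bipartite, hence $D$ is diperfect and the conclusion is immediate from Lemma~\ref{diper-alpha}. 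So from now on assume $k$ is odd and proceed by induction on $\vert V(D)\vert$.

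First I would record the two structural facts that drive the peeling step. (a) For each $i$, either $X_i\cap S=\emptyset$ or $X_i\subseteq S$: otherwise there are $x\in X_i\cap S$ and $y\in X_i\setminus S$, and maximality of $S$ gives $y$ a neighbour in $S$; but $N(y)\subseteq X_{i-1}\cup X_{i+1}$, while $X_{i-1}\mapsto X_i$ and $X_i\mapsto X_{i+1}$ force $S\cap(X_{i-1}\cup X_{i+1})=\emptyset$ (else some vertex of $S$ is adjacent to $x$), a contradiction. (b) Hence $X_i\subseteq S$ implies $X_{i-1}\cap S=X_{i+1}\cap S=\emptyset$; since $S\neq\emptyset$ and, by (a)--(b), no two consecutive parts both lie in $S$, a short case analysis using that $k$ is odd (if the pattern ``$X_i\subseteq S$, $X_{i+1}\cap S=X_{i+2}\cap S=\emptyset$'' never occurred, then each part in $S$ would force the part two steps ahead into $S$, and as $k$ is odd this would put every part in $S$) produces three consecutive parts which, after relabelling, are $X_1,X_2,X_3$ with $X_1\subseteq S$ and $X_2\cap S=X_3\cap S=\emptyset$.

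Now pick $x_i\in X_i$ for $i\in\{1,2,3\}$ and set $P:=x_1x_2x_3$ (a genuine path since $X_1\mapsto X_2\mapsto X_3$), $D':=D-\{x_1,x_2,x_3\}$ and $S':=S-x_1$. I would verify that $S'$ is a maximum stable set of $D'$ exactly as in the proof of Lemma~\ref{lema_circuito_estendido}: if $Z$ is a larger stable set of $D'$, then $\vert Z\vert=\vert S\vert$ (as $Z$ is also stable in $D$), $Z$ must contain $X_i-x_i$ for some $i\in\{1,2,3\}$ (otherwise $Z\cup X_2$ is a stable set of $D$ larger than $S$), and then $Z\cup\{x_i\}$ is such a set, a contradiction. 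Finally, if one of $X_1-x_1$, $X_2-x_2$, $X_3-x_3$ is empty, then $U(D')$ has no odd cycle, so $D'$ is bipartite, hence diperfect, and has an $S'$-path partition by Lemma~\ref{diper-alpha}; otherwise $D'$ is a connected odd extended cycle with $\vert V(D')\vert<\vert V(D)\vert$ and the induction hypothesis applies. The base case, where every $X_i$ is a singleton so that $D$ is a directed odd cycle, is settled directly: for each maximal run of vertices outside $S$ take the path consisting of that run followed by the next vertex of $S$; these paths form an $S$-path partition. In every case, if $\sP'$ is an $S'$-path partition of $D'$, then $\sP'\cup\{P\}$ is an $S$-path partition of $D$.

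The only non-routine point is the bookkeeping in the last step: confirming that $S'$ remains a maximum stable set of $D'$ and correctly separating the ``bipartite/diperfect'' case from the ``strictly smaller odd extended cycle'' case so that the induction is legitimate. Everything else is mechanical and, for the BE-property, already appears in the proof of Lemma~\ref{lema_circuito_estendido}.
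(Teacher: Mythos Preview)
Your proposal is correct and follows essentially the same approach as the paper, which explicitly omits the proof and states that it is analogous to that of Lemma~\ref{lema_circuito_estendido} with Lemma~\ref{diper-alpha} in place of Lemma~\ref{diperD-BE}. Your write-up even tightens a minor point in that proof by cleanly distinguishing the case where some $X_i-x_i$ is empty (so $D'$ is bipartite, hence diperfect) from the case where $D'$ is again a smaller odd extended cycle.
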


Let $D$ be an arc-locally in-semicomplete digraph. By Theorem~\ref{lem-arc-resultado}\rm{(ii)}, $V(D)$ admits a partition $(V_1, V_2, V_3)$ such that $D[V_1]$ is a semicomplete digraph, $V_1 \mapsto V_2$, $V_1 \Rightarrow V_3$ and $V_2 \Rightarrow V_3$.

\begin{lemma}
\label{lem_arc_in_berge_v1_not_empty}
Let $D$ be an arc-locally in-semicomplete digraph such that every proper induced subdigraph of $D$ satisfies the $\alpha$-property. Let $(V_1,V_2,V_3)$ be a partition of $V(D)$ as described in Theorem~\ref{lem-arc-resultado}\rm{(ii)}. If $V_1 \neq \emptyset$, then $D$ satisfies the $\alpha$-property.
\end{lemma}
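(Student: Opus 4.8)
The plan is to reduce to the case already handled in Lemma~\ref{lem_arc_in_berge_v1_empty} by stripping off the semicomplete part $V_1$, or, failing that, to use the structural lemmas about $N(Z)$ to find a stable set whose neighborhood is no larger than itself and apply Theorem~\ref{arc-in-maior-S-be} or Lemma~\ref{stable_set_S_menor_vizinhanca} (the $\alpha$-analogue of which follows since an $S_{BE}$-path partition is an $S$-path partition, so everything we proved for the BE-side transfers). Let $S$ be a maximum stable set of $D$; since $S$ is arbitrary it suffices to produce an $S$-path partition. Because $D[V_1]$ is semicomplete, $\lvert V_1 \cap S\rvert \le 1$. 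I would split on whether $V_1 \cap S = \emptyset$ or $V_1 \cap S = \{v\}$ for some vertex $v$.

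First, suppose $V_1 \cap S = \emptyset$. Then $S \subseteq V_2 \cup V_3$, and since $V_1 \mapsto V_2$ and $V_1 \Rightarrow V_3$, no vertex of $V_1$ has an in-neighbor in $S$; moreover $V_1 \to V_2$ means each vertex of $V_1$ dominates all of $V_2$. The idea here is to absorb $V_1$ into paths: consider $D' = D - V_1$, which is again arc-locally in-semicomplete, has the induced partition $(\emptyset, V_2, V_3)$, and has $S$ as a maximum stable set (removing vertices not in $S$ cannot decrease $\alpha$, and $\alpha(D')\le\alpha(D)$ since $S$ still works, so equality). By Lemma~\ref{lem_arc_in_berge_v1_empty}, $D'$ satisfies the $\alpha$-property, so it has an $S$-path partition $\sP'$. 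Now I need to reinsert the vertices of $V_1$ without breaking orthogonality. Since $D[V_1]$ is semicomplete, $D[V_1]$ has a Hamiltonian path $u_1 u_2 \dots u_m$ (every semicomplete digraph has one); and since $V_1 \mapsto V_2$, the endpoint $u_m$ can be prepended to any path of $\sP'$ that starts in $V_2$. The delicate point is guaranteeing that $\sP'$ actually has a path starting in $V_2$, or otherwise handling the case $V_2 = \emptyset$ (then $D[V_1 \cup V_3]$ has a clique cut or $V_1$ itself, and we can split by Lemma~\ref{samb_part}); I would argue that if $V_2 \ne \emptyset$ then since $S$ is maximum and $V_2$ is an odd extended cycle, some vertex of $V_2$ lies on a path of $\sP'$ that it begins, or we can reroute locally using $V_1 \mapsto V_2$.

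Second, suppose $V_1 \cap S = \{v\}$. Then $S - v \subseteq V_2 \cup V_3$, and since $v$ is in a maximum stable set while $V_1 \to V_2$, we must have — wait, $v \to x$ for every $x \in V_2$ contradicts $v \in S$ being non-adjacent to nothing unless $V_2 = \emptyset$; more carefully, $v \in S$ and $v \to V_2$ forces $V_2 \cap S = \emptyset$, but that is fine. In this subcase I would set $Z = \{v\}$ and examine $N(Z) = N(v)$. If $\lvert N(v)\rvert \le 1$ we are done by the $\alpha$-analogue of Lemma~\ref{stable_set_S_menor_vizinhanca}. Otherwise I expect to show, using that $D[V_1]$ is semicomplete (so $v$ dominates or is dominated by every other vertex of $V_1$) together with Lemma~\ref{arc-in-circuito} and Lemma~\ref{lema_arc-livre-triangulo}-type triangle-freeness arguments in $V_2 \cup V_3$, that $V_1 = \{v\}$ or that one can still remove $v$ and a matched neighbor as in the proofs of Lemmas~\ref{arc-in-case-d<=2}--\ref{arc-in-case-d=3}. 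Concretely, delete $N[v]$ or an appropriate matched set, apply the $\alpha$-property inductively to the smaller digraph, and splice $v$ back in as the start of a path through one of its out-neighbors in $V_2$.

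The main obstacle, I expect, is the bookkeeping when $V_2$ is nonempty and we must reinsert all of $V_1$ (a whole semicomplete chunk) into an $S$-path partition of $D - V_1$: we need the partition of $D'$ to leave enough "free ends" pointing into $V_2$ to hang the Hamiltonian path(s) of $D[V_1]$ onto, and the only levers available are $V_1 \mapsto V_2$ and the extended-cycle structure of $D[V_2]$. I would handle this by choosing the $S$-path partition of $D'$ more carefully — not just any one, but one whose restriction to $V_2$ (or to $V_2 \cup N^+(V_2)$) is controlled, exactly in the spirit of Lemma~\ref{hall-x-y-emp} — so that the needed path-starts in $V_2$ are present; alternatively, if no such partition exists, that failure itself should yield a stable set $Z$ with $\lvert N(Z)\rvert \le \lvert Z\rvert$ and we finish by Lemma~\ref{stable_set_S_menor_vizinhanca}'s $\alpha$-version. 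Everything else is routine case analysis mirroring the BE-side arguments already carried out in Section~\ref{be-alis}.
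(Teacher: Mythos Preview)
Your Case~1 ($V_1\cap S=\emptyset$) is essentially the paper's argument, and your worry about finding a path of $\sP'$ that starts in $V_2$ dissolves immediately: in the partition of Theorem~\ref{lem-arc-resultado}(ii) the set $V_2$ is always nonempty, and since $V_2\Rightarrow V_3$ and $V_1$ has been deleted, every path of $\sP'$ that meets $V_2$ must start in $V_2$. (You also do not need to invoke Lemma~\ref{lem_arc_in_berge_v1_empty}: $D'=D-V_1$ is a proper induced subdigraph, so it satisfies the $\alpha$-property directly by hypothesis.)

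Your Case~2 ($V_1\cap S=\{v\}$), however, has a real gap. The route via $\lvert N(v)\rvert\le 1$ is a non-starter: $v\in V_1$ dominates all of $V_2$, and $V_2$ is an odd extended cycle of length $\ge 5$, so $\lvert N(v)\rvert\ge 5$. The fallback you sketch---delete $N[v]$ or ``$v$ and a matched neighbor'' and recurse---does not give control: deleting $N[v]$ wipes out all of $V_1\cup V_2$, and you are then stuck reinserting the entire semicomplete block \emph{and} the whole extended cycle into a path partition of a piece of $V_3$; removing just $v$ and one neighbor leaves no reason for $S-v$ to be maximum in the remainder. Nothing here mirrors the BE-side lemmas you cite, because in those lemmas the sets being removed are matched inside a bipartite slab, not a semicomplete block sitting over the whole cycle.

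The paper's idea in this case is different and is the missing ingredient: since $v\in V_1\cap S$ forces $S\cap V_2=\emptyset$, one can remove from $V_2$ a single directed cycle $C=x_1x_2\cdots x_kx_1$ picking one vertex from each $X_i$, without touching $S$. Then $S$ is still maximum in $D'=D-V(C)$, which by hypothesis has an $S$-path partition $\sP'$. The cycle $C$ is then threaded back in as a Hamiltonian path $x_iP'x_{i-1}$ on $V(C)$, inserted at an appropriate point of some path of $\sP'$: either at a leftover vertex of $V(Q)\setminus V(C)$ (if $Q$ is not just $C$), or at a vertex of $N_1$ (if $V(Q)=V(C)$, using that $V_3\neq\emptyset$ here because $\alpha(D[V_2])>1$ while $\lvert S\cap V_1\rvert=1$). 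The point is that one reduces $V_2$ rather than $V_1$, exploiting $S\cap V_2=\emptyset$; your plan attacks the wrong piece.
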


\begin{proof}
Let $S$ be a maximum stable set of $D$. The proof is divided into two cases, depending on whether $S \cap V_1=\emptyset$ or $S \cap V_1 \neq \emptyset$. \newline

\textbf{Case 1.} $S \cap V_1 = \emptyset$. Let $D'=D-V_1$. Note that $S$ is maximum in $D'$. By hypothesis, $D'$ is $\alpha$-diperfect. Let $\sP'$ be an $S$-path partition of $D'$. Since $V_2 \Rightarrow V_3$, there exists a path $xPy$ in $\sP'$ such that $x$ in $V_2$. Since $D[V_1]$ is a semicomplete digraph, it follows that $D[V_1]$ is diperfect. By Lemma~\ref{diper-alpha}, $D[V_1]$ satisfies the $\alpha$-property; this implies that there exists a Hamiltonian path $uP'v$ in $D[V_1]$. Since $v \mapsto V_2$, $v \to x$. Let $R=uP'vxPy$ be a path formed by the concatenation of $P'$ and $P$. Thus the collection $(\sP'-P) \cup R$ is an $S$-path partition of $D$. \newline

\textbf{Case 2.} $S \cap V_1 \neq \emptyset$. Since $V_1 \mapsto V_2$, $S \cap V_2 = \emptyset$. Let $Q:=Q[X_1,X_2, \ldots, X_k]$ be the odd extended cycle of length at least five corresponding to $D[V_2]$. Let $x_i \in X_i$ for all $i \in \{1,2,\ldots,k\}$ and let $C=x_1x_2 \ldots x_kx_1$ be a cycle of $D$. Let $D'=D-V(C)$. Since $V(C) \cap S = \emptyset$, $S$ is maximum in $D'$. By hypothesis, $D'$ is $\alpha$-diperfect. Let $\sP'$ be an $S$-path partition of $D'$. The rest of proof is divided into two subcases, depending on whether $V(Q) \neq V(C)$ or $V(Q)=V(C)$. \newline

\textbf{Case 2.1.} $V(Q) \neq V(C)$. First, suppose that there exists a vertex $v_i \in X_i-x_i$ that starts (resp., ends) a path $v_iPw$ (resp., $wPv_i$ ) in $\sP'$ for some $i \in \{1,2,\ldots,k\}$. Let $x_iP'x_{i-1}$ (resp., $x_{i+1}P'x_i$) be a path in $C$ containing $V(C)$. By definition of extended cycle, $x_{i-1} \to v_i$ (resp., $v_i \to x_{i+1}$). Let $R=x_iP'x_{i-1}v_iPw$ (resp., $R=wPv_ix_{i+1}P'x_i$) be a path. Thus the collection $(\sP'-P) \cup R$ is an $S$-path partition of $D$. So we may assume that there exists no vertex in $V(Q)-V(C)$ that starts or ends a path of $\sP'$. Thus there exists a vertex $v_i \in X_i-x_i$ such that $v_i$ is an intermediate vertex in a path $xPy$ of $\sP'$ for some $i \in \{1,2,\ldots,k\}$. Let $w$ be the vertex of $P$ that dominates $v_i$. Let $xP_1w$ and $v_iP_2y$ be the subpaths of $P$. Since $x_i,v_i$ belong to the same $X_i$ of $Q$ and $V_2 \Rightarrow V_3$, it follows that $w \in V_1 \cup X_{i-1}$. Since $V_1 \cup X_{i-1} \mapsto X_i$, $w \to x_i$. By definition of extended cycle, $x_{i-1} \to v_i$. Let $x_iP'x_{i-1}$ be a path in $C$ containing $V(C)$. Let $R=xP_1wx_iP'x_{i-1}v_iP_2y$ be the path formed by inserting $P'$ between $P_1$ and $P_2$. Thus the collection $(\sP'-P) \cup R$ is an $S$-path partition of $D$. \newline

\textbf{Case 2.2.} $V(Q) = V(C)$. Since $D'=D-V(C)$, $V(D')=V_1 \cup V_3$. Since $D[V_1]$ is a semicomplete digraph, $\alpha(D[V_1])=1$. Since $\alpha(D[V(Q)])>1$, $S \cap V_1 \neq \emptyset$ and $S$ is a maximum stable set in $D$, it follows that $V_3 \neq \emptyset$. Recall that $N_d$ is the set of vertices that are at distance $d$ from $Q$. Since $V_1 \mapsto V_2$, $N_d \subseteq V_3$ for $d\geq 1$. By Lemma~\ref{arc-in-F-prop}\rm{(v)}, $N^-(N_1) \subseteq V(Q) \cup V_1$. Assume that there exists a vertex $v$ in $N_1$ such that $v$ starts a path $vPw$ in $\sP'$. Without loss of generality, assume that $x_1 \in V(C)$ dominates $v$ in $D$. Let $x_2P'x_1$ be a path in $C$ containing $V(C)$. Let $R=x_2P'x_1vPw$ be the path formed by the concatenation of $P'$ and $P$. Thus the collection $(\sP'-P) \cup R$ is an $S$-path partition of $D$. So we may assume that there exists no vertex $v$ in $N_1$ such that $v$ starts a path in $\sP'$. Since $N^-(N_1) \subseteq V(Q) \cup V_1$ and $V_1 \Rightarrow V_3$, there exists a path $xPy$ in $ \sP'$ such that $P$ contains vertices $w \in V_1$ and $v \in N_1$ where $w \to v$. Let $xP_1w$ and $vP_2y$ be the subpaths of $P$. Without loss of generality, assume that $x_1 \in V(C)$ dominates $v$ in $D$. Let $x_2P'x_1$ be a path in $C$ containing $V(C)$. Since $V_1 \mapsto V_2$, $w \to x_2$. Let $R=xP_1wx_2P'x_1vP_2y$ be the path formed by inserting $P'$ between $P_1$ and $P_2$. Thus the collection $(\sP'-P) \cup R$ is an $S$-path partition of $D$. This finishes the proof.
\end{proof}

Now, we are ready for the main result of this section.

\begin{theorem}
\label{teo_arc_in_berge}
Let $D$ be an arc-locally in-semicomplete digraph. If $D \in \sB$, then $D$ is $\alpha$-diperfect. 
\end{theorem}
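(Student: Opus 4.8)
The plan is to argue by induction on $|V(D)|$, so that we may assume every proper induced subdigraph of $D$ is $\alpha$-diperfect; since the class of arc-locally in-semicomplete digraphs is closed under taking induced subdigraphs, it then suffices to prove that $D$ itself satisfies the $\alpha$-property. First I would clear away the degenerate structural cases. If $D$ is disconnected or has a clique cut, then by Lemmas~\ref{samb_clique} and \ref{samb_part} one can partition $V(D)$ into parts on which $\alpha$ is additive and which satisfy the $\alpha$-property by the induction hypothesis, so $D$ does too. If $D$ is diperfect, then $D$ satisfies the $\alpha$-property by Lemma~\ref{diper-alpha}. Hence we may assume that $D$ is connected, not diperfect, and has no clique cut.

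Under these assumptions, Theorem~\ref{lem-arc-resultado}\rm{(ii)} guarantees a partition $V(D)=(V_1,V_2,V_3)$ with $D[V_1]$ semicomplete, $D[V_2]$ an odd extended cycle of length at least five, $D[V_3]$ bipartite, and $V_1\mapsto V_2$, $V_1\cup V_2\Rightarrow V_3$. The argument then splits on whether $V_1$ is empty. If $V_1=\emptyset$, I would invoke Lemma~\ref{lem_arc_in_berge_v1_empty}: triangle-freeness of $U(D)$ (Lemma~\ref{lema_arc-livre-triangulo}) together with the absence of induced non-oriented odd cycles of length at least five (Lemma~\ref{arc-in-circuito}) shows that $D$ has no induced blocking odd cycle, i.e. $D\in\D$, whence Theorem~\ref{arc-in-BE} already gives that $D$ is BE-diperfect, and in particular $D$ satisfies the $\alpha$-property. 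If $V_1\neq\emptyset$, I would invoke Lemma~\ref{lem_arc_in_berge_v1_not_empty} directly. Combining the two cases finishes the proof.

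Thus the theorem itself is essentially an assembly of previously-established facts, and all the genuine work lives in Lemma~\ref{lem_arc_in_berge_v1_not_empty}; that is the step I expect to be the main obstacle. There, given a maximum stable set $S$, one deletes either $V_1$ or a cycle $C$ meeting every class of the extended cycle $D[V_2]$, applies the induction hypothesis to obtain an $S$-path partition $\sP'$ of the reduced digraph, and then must splice the deleted vertices back onto paths of $\sP'$ using the domination relations $V_1\mapsto V_2$, $V_1\cup V_2\Rightarrow V_3$ and the extended-cycle structure. The delicate points are checking that the deleted set avoids every maximum stable set (so that $S$ stays maximum after deletion) and verifying, in each subcase---most notably when $V(Q)=V(C)$ and the surviving part $V_1\cup V_3$ must absorb the whole cycle---that the cycle vertices can always be inserted, either at the start or end of some path or as an internal segment consistent with the arc-locally in-semicomplete condition. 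It is worth noting that the hypothesis $D\in\sB$ plays no explicit role here: by Lemma~\ref{arc-in-circuito} an arc-locally in-semicomplete digraph never contains an induced anti-directed odd cycle, so membership in $\sB$ is automatic.
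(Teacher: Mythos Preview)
Your proposal is correct and follows essentially the same route as the paper: reduce to the connected, non-diperfect, clique-cut-free case via Lemmas~\ref{diper-alpha}, \ref{samb_part}, \ref{samb_clique}, invoke the structure theorem, and then split on whether $V_1$ is empty using Lemmas~\ref{lem_arc_in_berge_v1_empty} and~\ref{lem_arc_in_berge_v1_not_empty}. The paper additionally separates out the subcase $V_1=V_3=\emptyset$ (Lemma~\ref{lema_circuito_estendido-berge}), but as you implicitly observe this is subsumed by Lemma~\ref{lem_arc_in_berge_v1_empty}, and your remark that $D\in\sB$ is automatic from Lemma~\ref{arc-in-circuito} is a correct side observation not made explicit in the paper.
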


\begin{proof}
Since every induced subdigraph of $D$ is also an arc-locally in-semicomplete digraph, it suffices to show that $D$ satisfies the $\alpha$-property. If $D$ is diperfect or $D$ has a clique cut, then the result follows by Lemmas~\ref{diper-alpha}, \ref{samb_part} and \ref{samb_clique}. So we may assume that $V(D)$ can be partitioned into $(V_1, V_2, V_3)$ as described in Theorem~\ref{lem-arc-resultado}\rm{(ii)}. If $V_1 = V_3 = \emptyset$, then by Lemma~\ref{lema_circuito_estendido-berge} $D$ satisfies the $\alpha$-property. So $V_1 \cup V_3 \neq \emptyset$. If $V_1 = \emptyset$, then the result follows by Lemma~\ref{lem_arc_in_berge_v1_empty}. If $V_1 \neq \emptyset$, then the result follows by Lemma~\ref{lem_arc_in_berge_v1_not_empty}. 
\end{proof}

Similarly to Theorem~\ref{arc-out-BE}, we have the following result.

\begin{theorem}
\label{teo_arc_out_berge}
Let $D$ be an arc-locally out-semicomplete digraph. If $D \in \sB$, then $D$ is $\alpha$-diperfect. 
\qed
\end{theorem}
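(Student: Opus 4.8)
The plan is to deduce this statement from Theorem~\ref{teo_arc_in_berge} by passing to the inverse digraph, mirroring exactly the way Theorem~\ref{arc-out-BE} was obtained from Theorem~\ref{arc-in-BE}. Let $H$ be the inverse of $D$, that is, $V(H)=V(D)$ and $E(H)=\{uv : vu \in E(D)\}$. As observed at the beginning of Section~\ref{berge-alis} (and in Section~\ref{arc-semi}), the inverse of an arc-locally out-semicomplete digraph is arc-locally in-semicomplete, so $H$ is arc-locally in-semicomplete.

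The first thing I would check is that membership in $\sB$ is invariant under taking inverses. This is routine: for any digraph $C$, the underlying graph $U(C)$ is unchanged when passing to the inverse, and a vertex is a source in $C$ precisely when it is a sink in the inverse of $C$ (and vice versa). Hence $C$ is an anti-directed odd cycle if and only if the inverse of $C$ is, and likewise $C$ is an induced subdigraph of $D$ if and only if the inverse of $C$ is an induced subdigraph of $H$. Therefore $D \in \sB$ implies $H \in \sB$.

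Next I would recall (again from Section~\ref{berge-alis}) that an $S$-path partition of $D$ is precisely an $S$-path partition of $H$ with the direction of every path reversed; since reversing paths affects neither orthogonality nor the stable set $S$, the digraph $D$ satisfies the $\alpha$-property if and only if $H$ does. Because induced subdigraphs of $D$ correspond bijectively (by taking inverses) to induced subdigraphs of $H$, it follows that $D$ is $\alpha$-diperfect if and only if $H$ is $\alpha$-diperfect.

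Putting these together: from $D \in \sB$ we get $H \in \sB$; by Theorem~\ref{teo_arc_in_berge}, $H$ is $\alpha$-diperfect; hence $D$ is $\alpha$-diperfect, as desired. There is no real obstacle here — the only thing to be careful about is spelling out the two invariance facts (membership in $\sB$ and the $\alpha$-property under inversion), both of which are immediate from the definitions; everything substantive has already been done in Theorem~\ref{teo_arc_in_berge}.
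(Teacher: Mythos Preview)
Your proposal is correct and follows exactly the approach indicated in the paper: pass to the inverse digraph, note that arc-locally out-semicomplete becomes arc-locally in-semicomplete, that membership in $\sB$ and the $\alpha$-property are preserved under inversion, and apply Theorem~\ref{teo_arc_in_berge}. This is precisely why the paper states the result with a bare \qed and the remark ``Similarly to Theorem~\ref{arc-out-BE}''.
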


\section{Conclusion}
\label{conclu}

In this paper, we have shown some structural results for $\alpha$-diperfect digraphs and BE-diperfect digraphs. In particular, the Theorems~\ref{arc-in-maior-S} and \ref{arc-in-maior-S-be} state that if a digraph $D$ is a minimal counterexample to both conjectures, then $\alpha(D) < \frac {\vert V(D)\vert }{2}$. This result suggests that dealing with digraph with small stability number may be the most difficult part of both conjectures. We also have shown that both conjectures hold for arc-locally (out) in-semicomplete digraphs. 

Moreover, Conjectures~\ref{conj_berge} and~\ref{conj_be} are somehow similar to Berge's conjecture on perfect graphs (nowadays known as Strong Perfect Graph Theorem). Furthermore, for more than three decades no results regarding Conjecture~\ref{conj_berge} were published. This suggests that both problems may be very difficult.


\bibliography{ref}


\end{document}